\newtheorem{theorem}{Theorem}[section]
\newtheorem{remark}[theorem]{Remark}
\newtheorem{assumption}[theorem]{Assumption}
\newtheorem{lemma}[theorem]{Lemma}
\newtheorem{proposition}[theorem]{Proposition}
\newtheorem{corollary}[theorem]{Corollary}
\newtheorem{definition}[theorem]{Definition}
\newtheorem{example}[theorem]{Example}
\theoremstyle{plain}
\newcommand{\field}[1]{\mathbb{#1}}
\newcommand{\R}{\field{R}}
\newcommand{\N}{\field{N}}
\newcommand{\E}{\field{E}}
\renewcommand{\P}{\field{P}}
\DeclareMathOperator*{\argmin}{arg\,min}
\DeclareMathOperator*{\argmax}{arg\,max}
\DeclareMathOperator*{\supp}{supp}
\DeclareMathOperator*{\esssup}{ess\,sup}
\title[A unifying framework for submodular mean field games]{A unifying framework for submodular mean field games} 
\author[Dianetti]{Jodi Dianetti}
\author[Ferrari]{Giorgio Ferrari}
\author[Fischer]{Markus Fischer}
\author[Nendel]{Max Nendel}
\keywords{}
\date{\today}
\numberwithin{equation}{section}
\begin{document}
\maketitle 

\begin{abstract}
We provide an abstract framework for submodular mean field games and identify verifiable sufficient conditions that allow to prove existence and approximation of strong mean field equilibria in models where data may not be continuous with respect to the measure parameter and common noise is allowed. The setting is general enough to encompass qualitatively different problems, such as mean field games for discrete time finite space Markov chains, singularly controlled and reflected diffusions, and mean field games of optimal timing. Our analysis hinges on Tarski's fixed point theorem, along with technical results on lattices of flows of probability and sub-probability measures.\\
[0.5em]
\noindent {\textbf{Keywords}}: Mean field games; submodularity; complete lattice of measures; Tarski's fixed point theorem; Markov chain; singular stochastic control; reflected diffusion; optimal stopping.\\
[0.5em]
{\textbf{AMS(2020) subject classification}}: 49N80, 91A16, 93E20, 06B23.
\end{abstract}

\section{Introduction} 
\label{sec:intro}



Mean field games (MFGs in short) are limit models for non-cooperative symmetric $N$-
player games with interaction of mean field type as the number of players $N$ tends to infinity. They have been proposed independently by \cite{HuangMalhameCaines06} and \cite{LasryLions07}, and since their introduction they have attracted increasing interest in various fields of Mathematics ranging from PDE theory to stochastic analysis and game theory, as well as in applications in Economics, Finance, Biology, and Engineering, among others; we refer, for instance, to the recent two-volume book \cite{CarmonaDelarue18} for an extensive presentation of theoretical results and applications.

The interest in identifying a key property that allows to prove existence and approximation of equilibria for a general class of MFGs has motivated our study. 
Inspired by the early contribution of Topkis \cite{To} on submodular $N$-player games in a static setting, we identify submodularity as a relevant structural condition and explore the flexibility of lattice-theoretical techniques in MFGs enjoying a submodular structure. 
Submodular MFGs have already been considered in the literature; see \cite{Adlakhaetal} for a class of stationary discrete time games, \cite{Wiecek17} for a class of finite state MFGs with exit, \cite{CarmonaDelarueLacker16} for optimal timing MFGs, and \cite{dianetti.ferrari.fischer.nendel.2019} for MFGs involving a regularly controlled one-dimensional It\^o-diffusion. 
In this work, we push the analysis of ours \cite{dianetti.ferrari.fischer.nendel.2019} much forward, and we provide an abstract framework for submodular MFGs, which embeds qualitatively different problems and allows to show existence and approximation of their mean field equilibria. 
The results of our work can be informally presented as follows. 
\begin{enumerate}
    \item The submodular structure of the game yields an alternative way of establishing existence of MFG solutions by using the lattice-theoretical Tarski's fixed point theorem, rather than topological fixed point results. 
    This allows to treat systems with coefficients that are possibly discontinuous in the measure variable, as well as to  prove existence of strong solutions in settings involving a common noise.
    
    \item The set of MFG solutions enjoys a lattice structure so that there exist a minimal solution and a maximal solution with respect to a suitable order relation.      
    \item A learning procedure, which consists of iterating the best-response-map (thus computing a new flow of measures as best-response  to the previous measure flow) converges to the minimal (or the maximal) MFG solution, for appropriately chosen initial measure flows.   
\end{enumerate}  

These claims are made precise in Theorem \ref{theorem.main.general}, under suitable assumptions that are formulated at a general abstract level. 
Those requirements do not involve nondegeneracy of the underlying noise and are satisfied in a variety of formulations of the mean field game problem, including deterministic frameworks.
Clearly, the setting of our previous work \cite{dianetti.ferrari.fischer.nendel.2019} is included. Furthermore, in this paper we highlight the flexibility of the approach by considering four qualitatively different problems, in which the representative agent minimization problem involves as a state variable: 
(i) a finite state discrete time Markov chain (cf.\ Section \ref{SectFiniteMarkov}); (ii) a singularly controlled It\^o-diffusion, possibly affected by a common noise (cf.\ Section \ref{section singular controls}); 
(iii) an It\^o-diffusion facing a reflecting boundary condition (cf.\ Section \ref{section reflected diff}); 
a general progressive stochastic process whose evolution can be stopped by the representative player (cf.\ Section \ref{section OS}). Here a common source of noise is also allowed.
For each of these examples, existence and approximation results are derived through a suitable application of Theorem \ref{theorem.main.general}. It is worth noting that fine properties of lattices of probability and sub-probability measures are needed in order to apply Theorem \ref{theorem.main.general} in the different examples. As we were unable to find a precise reference for those properties, we present them in brevity in Section \ref{section lattices}. Given the generality of the setting in which they are obtained, we believe that those findings are of interest on their own and might be a useful technical tool in other works as well.

The approach that we follow in this paper focuses exclusively on the representative agent minimization problem, without reformulating the problem in terms of a related  forward-backward system or of the master equation. Whether those reformulations of the mean field game problem allow to obtain results of a similar fashion as ours is, to the best of our knowledge, an open question that we leave for future research.

\subsection{Existence and approximation results in MFGs}

Questions of existence and approximation of mean field equilibria have been addressed in the literature at various degrees of generality and through different mathematical techniques. 

General existence results for solutions to the MFG problem can be obtained through Banach's fixed point theorem if the time horizon is small (cf.\ \cite{HuangMalhameCaines06}). 
For arbitrary time horizon, a version of the Brouwer-Schauder fixed point theorem, including generalizations to multi-valued maps, can be used; cf.\ \cite{cardaliaguet2010} and \cite{Lacker15}  (see also \cite{Fu&Horst17} in the context of MFGs with singular controls). 
In the presence of a common noise (i.e., an aggregate source of randomness), the existence of a weak MFG solution (i.e., not adapted to the common noise) can be established for a general class of MFGs.  
On the other hand, the existence of a strong MFG solution (i.e., adapted to the common noise) has been addressed mainly under conditions which imply uniqueness of equilibria.  
For example, in \cite{CarmonaDelarueLacker16} an analogue of the famous result by Yamada and Watanabe is derived,
and it is used to prove existence and uniqueness of a strong solution under the Lasry-Lions monotonicity conditions (see \cite{LasryLions07}). 
Under lack of uniqueness, existence of strong solutions remains mainly an open question. 
 
Since uniqueness of equilibria in game theory is the exception rather than the rule, it is not surprising that multiple solutions often arise also in MFGs.
This phenomenon has been investigated mainly on a case by case basis, and specific examples with multiple solutions have been presented in the recent literature \cite{BardiFischer18, Cecchin&DaiPra&Fischer&Pelino19, DelarueFT19, tchuendom}, among others. 
Interestingly, the submodularity assumption appears implicitly in a number of classical linear-quadratic models (see, e.g., \cite{Bensoussan-etal-2016}) and in \cite{BardiFischer18, Campietal, Cecchin&DaiPra&Fischer&Pelino19, DelarueFT19}, although this property is not exploited therein. 
The increasing interest in non-uniqueness of solutions together with the perspective of characterizing many models through a unique key structural property has been one of the main motivations for our study of submodular MFGs.  

Once existence is established, it is natural to investigate how and whether solutions to MFGs can be approximated in a constructive way. 
This problem has been addressed by Cardaliaguet and Hadikhanloo \cite{CardaliaguetHadikhanloo17}.  
They analyze a learning procedure -- similar to what it is known as "fictitious play" (cf.\ \cite{HofbauerSandholm02} and the references therein) --  where the representative agent, starting from an arbitrary flow of measures, computes a new flow of measures by updating the average over past measure flows according to the best-response to that average.  
For potential mean field games, the authors establish convergence of this kind of fictitious play via PDE methods. 
Similar approaches have been further developed in some more recent works (see \cite{elie2019approximate, perrin2020fictitious, xie2020provable}, among others) with the help of machine learning techniques, providing a rich set of tools able to address computational aspects in MFGs.  
As already discussed, our result also contributes to the approximation question since the submodularity condition provides convergence of a simple learning procedure \`a la Topkis \cite{Topkis11}, consisting of iterating the best-response map (see \cite{dianetti.ferrari.fischer.nendel.2019}, and also \cite{DianettiFerrari} in the context of $N$-player games).
In particular, this type of algorithm seems to be quite promising when combined with reinforcement learning methods, as shown in the recent \cite{lee.Rengarajan.Kalathil.Shakkottai2021} for stationary discrete time finite state MFGs with complementarities.


\subsection{Examples.}

We now discuss in more detail the applications of Theorem \ref{theorem.main.general} that we present in this work, by also reviewing the related literature.

\subsubsection{Submodular mean field games with finite state discrete time Markov chains}

We start with a simple class of finite state discrete time MFGs where expected costs are to be minimized over a finite time horizon. Control acts on the Kolmogorov equation, that is, on the transition matrix that determines the evolution of the state probability vector. Mean field interaction only occurs through the measure variable appearing in the cost coefficients. We relate our model to the general set-up of Section~\ref{section general case} and provide sufficient conditions so that part~a) of Theorem~\ref{theorem.main.general} applies, yielding existence of solutions. We also give a simple example of a class of two-state models satisfying those conditions. They are related to the continuous time two-state MFGs studied in \cite{gomes.velho.wolfram.2013, Cecchin&DaiPra&Fischer&Pelino19}, also see \cite{bayraktar.zhang.2020}, which exhibit multiple solutions. 

The study of finite state discrete time MFGs goes back to \cite{gomes.mohr.souza.2010}, where existence and convergence to equilibrium for a class of finite horizon problems were established. For discrete MFGs of this type satisfying an analogue of the Lasry-Lions monotonicity condition, convergence of a ``fictitious play'' learning procedure is proved in \cite{hadikhanloo.silva.2019}. There, discrete models are also shown to approximate corresponding continuous time and space MFGs. Existence of solutions for a general class of finite and infinite horizon discrete MFGs is established in \cite{doncel.gast.gaujal.2019}, and their connection with the underlying $N$-player games investigated. Discrete time MFGs with more general state space have been studied recently under various optimality criteria; see \cite{saldi.basar.raginsky.2018, saldi.basar.raginsky.2020} for infinite horizon discounted cost and risk sensitive problems, respectively, \cite{Wiecek20} for ergodic MFGs, and \cite{bonnans.lavigne.pfeiffer.2021} for risk averse problems. Existence of solutions in those works is established through a topological fixed point theorem; in particular, cost coefficients are assumed to depend continuously on the measure variable. Although our simple discrete models fall under the framework of, for instance, \cite{doncel.gast.gaujal.2019}, the continuity assumptions there are not needed here, since here, as in the aforementioned \cite{Wiecek17} and \cite{Adlakhaetal}, we rely on an order-theoretic fixed point result. Lastly, we mention that our finite state MFGs do not involve common noise. Choosing a common noise for finite state problems is in fact less straightforward than in the usual continuous space setting; see the recent works \cite{bayraktar.cecchin.cohen.delarue.2020, bayraktar.cecchin.cohen.delarue.2021} for continuous time finite state problems.

\subsubsection{Submodular mean field games with singular controls}

The number of papers considering MFGs of singular stochastic controls is still relatively limited. \cite{Fu&Horst17} employs a relaxed approach in order to establish existence for a general class of MFGs involving singular controls, while the more recent \cite{Fu19} extends the analysis to MFGs in which interaction takes place both through states and controls. In \cite{Campietal} and \cite{Guo&Xu18} MFGs for finite-fuel follower problems are considered. By employing, respectively, the connection to problems of optimal stopping and PDE methods, the structure of the mean-field equilibrium as well as its connection to Nash equilibria for the corresponding $N$-player stochastic differential games is derived. Finally, \cite{CaoGuo} and \cite{dianettietal} study stationary MFGs, i.e.\ games in which the interaction comes through the stationary distribution of the population of players. \cite{dianettietal} considers ergodic and discounted performance criteria, and studies the relation across the corresponding equilibria; in \cite{CaoGuo} the representative player can employ two-sided controls in order to adjust a geometric dynamics and optimize a certain discounted payoff. It is worth noting that none of the previous contributions allows for the presence of common noise, which we can instead treat in our analysis. We can indeed show that the class of submodular MFGs with geometric dynamics that we consider in Section \ref{section geometric BM} admits strong equilibria (i.e.\ adapted to the common noise), which can in fact also be approximated through the previously discussed learning algorithm \`a la Topkis. In the case of a general nonconvex setting, a weak formulation of the singular control MFG is employed, and existence of mean field equilibria is proved by means of an approximation result through Lipschitz-continuous controls. Furthermore, convergence of the learning procedure is also established (see Section \ref{section nonconvex singular}).

\subsubsection{Submodular mean field games with reflecting boundary conditions}
 Theorem \ref{theorem.main.general} yields also existence and approximation of equilibria for submodular MFGs in which the representative player can employ regular controls in order to adjust the drift of a one-dimensional It\^o-diffusion which is constrained, via a Skorokhod reflection, to live in a bounded interval (cf.\ Section \ref{section reflected diff}). These models have received recent interest since they naturally arise as suitable limits of interacting queuing systems, see \cite{bayraktar.budhiraja.cohen.2019.AAP} and \cite{bayraktar.budhiraja.cohen.2018numerical}. As in \cite{bayraktar.budhiraja.cohen.2019.AAP}, we employ a weak (distributional) approach, and, by enforcing additional mild technical requirements on the data of the problem, an application of Tanaka's formula for continuous semimartingales allows to embed the considered MFG into the class of abstract submodular MFGs for which Theorem \ref{theorem.main.general} holds. Then, existence and approximation of mean field equilibria follow.

\subsubsection{Supermodular mean field games with optimal stopping}

In Section \ref{section OS} we consider a class of MFGs where the representative agent can choose a stopping time in order to stop the evolution of a general multi-dimensional progressive process, while maximizing a certain reward functional. The model is formulated by including the presence of a common noise. By assuming that the running profit function is increasing with respect to the stochastic order put on the lattice of sub-probability measures, the game enjoys a supermodular (rather than submodular, since here we are dealing with a maximization problem) structure that allows to invoke Theorem \ref{theorem.main.general} and show existence of equilibria. Furthermore, under suitable continuity requirements, convergence of a learning procedure is obtained.

Models involving MFGs of optimal stopping have been considered in the economic literature mostly in stationary settings (see \cite{Luttmer} and \cite{Miao} in the context of industry equilibria) and, more recently, under greater generality also in the mathematical literature; 
see \cite{bouveret.dumitrescu.tankov.20}, \cite{Tankov2}, and \cite{Bertucci}. 
Using a relaxed solution approach, in \cite{bouveret.dumitrescu.tankov.20} and \cite{Tankov2} an It\^o-diffusive setting  not allowing for a common noise is considered (see also Example \ref{os example MFGs Tankov} in Section \ref{section OS}). 
In \cite{Bertucci}, an analytical approach to MFGs of optimal stopping is developed through the study of the associated variational inequality. 
Explicit use of the supermodular property and of the Tarski's fixed point theorem in a MFG of stopping with common noise is made in  \cite{carmona.delarue.lacker.2017.timing} (see also Example \ref{os example MFGs timing} in Section \ref{section OS}). 

\subsection{Outline of the paper} The rest of the paper is organized as follows. Section \ref{section general case} presents the general approach to submodular MFGs. There, we state and prove Theorem \ref{theorem.main.general}. Section \ref{section lattices} derives the properties of lattices of probability and sub-probability measures needed in the paper. The remaining sections deal with applications of the abstract setup: Section \ref{SectFiniteMarkov} deals with MFGs having discrete time finite space Markov chains as state variables; Section \ref{section singular controls} considers MFGs with singular controls; Section \ref{section reflected diff} treats MFGs with reflecting boundary conditions, while MFGs of optimal stopping are addressed in the final Section \ref{section OS}. For the reader's convenience, we collect some lattice-theoretical preliminaries in Appendix \ref{append.lattice}.


\section*{General notation}
For a fixed finite time horizon $T \in (0,\infty)$, we introduce the following canonical spaces:
\begin{enumerate}
\item $\mathcal{C}$ denotes the  space of $\mathbb{R}$-valued  continuous functions defined on $[0,T]$, endowed with the supremum norm and the Borel $\sigma$-algebra $\mathcal{B}(\mathcal{C})$ generated by the supremum norm.
\item For a set $A \subset \mathbb R$, let $\Lambda$ denote the set of \emph{deterministic relaxed controls} on $[0,T] \times A$; that is, the set of positive measures $\lambda$ on $[0,T] \times A$ such that $\lambda([s,t] \times A)=t-s$ for all $s,t \in [0,T]$ with $s<t$. The set $\Lambda$ is endowed with the topology of weak convergence of probability measures, and $\mathcal B (\Lambda)$ denotes the related Borel $\sigma$-algebra.
\item $\mathcal{D}$ denotes the Skorokhod space of $\mathbb{R}$-valued c\`adl\`ag functions, defined on $[0,T]$, endowed with the Borel $\sigma$-algebra $\mathcal{B}(\mathcal{D})$ generated by the Skorokhod topology.  On the space $\mathcal{D}$ consider the \emph{pseudopath topology} $\tau_{pp}^{\text{\emph{\tiny T}}}$; that is, the topology on $\mathcal{D}$ induced by the convergence in the measure $dt + \delta_T$ on the interval $[0,T]$, where $dt$ denotes the Lebesgue measure, and $\delta_T$ denotes the Dirac measure at the terminal time $T$. For the topological space $(\mathcal{D} ,\tau_{pp}^{\text{\emph{\tiny T}}})$, the Borel $\sigma$-algebra induced by the topology $\tau_{pp}^{\text{\emph{\tiny T}}}$, coincides with the $\sigma$-algebra induced by the Skorokhod topology (see the Appendix in \cite{li&zitkovic2017}). 
\item $\mathcal{D}_{\uparrow}$ denotes the set of elements of $\mathcal D$ which are nonnegative and nondecreasing, endowed with the Borel $\sigma$-algebra $\mathcal{B}(\mathcal{D}_{\uparrow})$ induced by the Skorokhod topology. Note that $\mathcal{D}_{\uparrow}$ is a closed subset of the topological space $(\mathcal{D} ,\tau_{pp}^{\text{\emph{\tiny T}}})$.
\item $\mathcal V$ denotes the set of elements of $\mathcal D$ with bounded total variation, endowed with the Borel $\sigma$-algebra $\mathcal{B}(\mathcal V)$ induced by the Skorokhod topology. Furthermore, the space $\mathcal V$ is a closed subset of the topological space $(\mathcal{D} , \tau_{pp}^{\text{\emph{\tiny T}}})$. 
\end{enumerate}


\section{A general approach to submodular MFGs}
\label{section general case}
In this section, we consider an abstract version of a mean field game.
The aim of this section is to collect fundamental structural conditions and arguments, which provide a common basis for the examples treated in the next sections. For the lattice-theoretical notions and preliminaries that are used throughout this section and the rest of this paper, we refer to Appendix \ref{append.lattice}.

\subsection{Formulation of the abstract model}
Let $(L, \leq^{\text{\tiny{$L$}}})$ be a complete and Dedekind super complete lattice, which represents the set of possible distributions of players, see Definition \ref{def.dedekind}.
Let $E$ be the set of strategies of the representative player.
The set $E$ is endowed with a topology and a map $p: E\to L$, which can be interpreted as a projection, which maps each strategy to a related distribution. 
The representative player wants to minimize a cost functional 
$
J: E \times L \to \mathbb{R}, 
$
depending also on the distribution of her opponents.

We make the following assumption (see also Remark \ref{remark assumptions non topological} for a generalization):
\begin{assumption}
\label{assumption.bestresponsemap.nonempty}
For every $\mu \in L$, we assume that: 
\begin{enumerate}
    \item The set $\argmin_{E} J(\cdot, \mu)$ is nonempty and $J(\cdot,\mu)$ is lower semicontinuous;
    \item \label{assumption.bestresponsemap.nonempty.continouty projection} For any sequence  $(\nu^n)_n \subset  \argmin _E J(\cdot, \mu)$
such that $p (\nu^n)$ is nondecreasing or nonincreasing in $L$,  there exists a subsequence $(n_j)_{j \in \N}$ and $\nu \in \argmin_{E} J(\cdot, \mu)$ such that $\nu^{n_j}$ converges to $\nu$ as $j \to \infty$ and $ p\nu = \sup_j p(\nu^{n_j})$ or $p\nu = \inf_j p(\nu^{n_j})$, respectively.
\end{enumerate} 
\end{assumption} 

For $\mu\in L$, we define the set of best responses $R(\mu)\subset L$ by
\[
 R(\mu):=p\bigg(\argmin_{\nu\in E} J(\nu, \mu)\bigg).
\]

\begin{definition}
\label{definition.MFG.solution}
 We say that $\mu\in L$ is a \textit{mean field game equilibrium} if $\mu\in R(\mu)$, i.e. $\mu$ is a fixed point of the best-response-map.
\end{definition}

\subsection{Submodularity conditions and properties of the best-response-map}
Existence of MFG solutions is subject to the following abstract structural condition. 
\begin{assumption}[Submodularity Conditions]\ \label{assumption}
There exist operations $\land^{\text{\tiny{$E$}}}, \lor^{\text{\tiny{$E$}}}\colon  E \times E \to E$  such that:
\begin{enumerate} 
\item\label{assumption.existence.operation}  The projection $p$ behaves like a homeomorphism of lattices; that is, 
$$ 
p(\nu \land^{\text{\tiny{$E$}}} \bar{\nu} ) \leq^{\text{\tiny{$L$}}} p\nu \land^{\text{\tiny{$L$}}} p \bar{\nu} \leq^{\text{\tiny{$L$}}} p\nu \lor^{\text{\tiny{$L$}}} p\bar{\nu} \leq^{\text{\tiny{$L$}}} p(\nu \lor^{\text{\tiny{$E$}}} \bar{\nu}),\quad  \text{for each } \nu,\bar{\nu} \in E.
$$ 
\item \label{assumption.submod.J}
The cost functional satisfies the following submodularity properties  
$$
J(\nu \lor^{\text{\tiny{$E$}}} \bar{\nu} , \bar{\mu}) - J(\bar{\nu} , \bar{\mu}) \leq J(\nu \lor^{\text{\tiny{$E$}}} \bar{\nu} , {\mu}) - J( \bar{\nu}, {\mu}) \leq J(\nu , {\mu}) - J( \nu \land^{\text{\tiny{$E$}}} \bar{\nu} , {\mu}),
$$
for each $\nu, \bar{\nu} \in E$ and $\mu, \bar{\mu} \in L$ with $\mu \leq^{\text{\tiny{$L$}}}  \bar{\mu}$. 
\end{enumerate}
\end{assumption}

We underline that Condition \ref{assumption.submod.J} in Assumption \ref{assumption} coincides with the conditions in  \cite{To} only in the case in which $(E,\land^{\text{\tiny{$E$}}},\lor^{\text{\tiny{$E$}}})$ is a lattice.

We start our analysis with the following result on the structure of the sets of best responses.
\begin{lemma}\label{lemma.bestresponse}
Under Assumptions \ref{assumption.bestresponsemap.nonempty} and \ref{assumption} we have that:
\begin{enumerate}
\item[a)]\label{lemma.bestresponse.ordered} The set $R(\mu)$ is directed, i.e., for every $\eta^1,\eta^2\in R(\mu)$, there exist $\eta^\wedge,\eta^\vee\in R(\mu)$ such that $\eta^\wedge \leq^{\text{\tiny{$L$}}} \eta^1 \wedge^{\text{\tiny{$L$}}} \eta^2$ and $\eta^\vee \geq^{\text{\tiny{$L$}}} \eta^1 \vee^{\text{\tiny{$L$}}} \eta^2$.
 \item[b)]\label{lemma.bestresponse.increasing} For all $\mu,\overline{\mu}\in L$ with $\mu\leq^{\text{\tiny{$L$}}} \overline{\mu}$, $\inf R(\mu)\leq^{\text{\tiny{$L$}}} \inf R(\overline{\mu})$ and $\sup R(\mu)\leq^{\text{\tiny{$L$}}} \sup R(\overline{\mu})$.
 \item[c)]\label{lemma.bestresponsecomplete} For every $\mu\in L$, $\inf R(\mu)\in R(\mu)$ and $\sup R(\mu)\in R(\mu)$.
\end{enumerate}
\end{lemma}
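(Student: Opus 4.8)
The plan is to reduce all three parts to a single comparative-statics computation driven by the submodularity of $J$, and then to treat the order-completeness in part~c) separately by a monotone-sequence argument.

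First I would record the basic consequence of the submodularity condition \ref{assumption.submod.J} of Assumption~\ref{assumption}. Fix $\mu \leq^{\text{\tiny{$L$}}} \bar\mu$, let $\nu \in \argmin_E J(\cdot,\mu)$ and $\bar\nu \in \argmin_E J(\cdot,\bar\mu)$, and chain the first and last terms of the displayed submodularity inequality:
\[
J(\nu \lor^{\text{\tiny{$E$}}} \bar\nu, \bar\mu) - J(\bar\nu,\bar\mu) \;\leq\; J(\nu,\mu) - J(\nu \land^{\text{\tiny{$E$}}} \bar\nu,\mu).
\]
The left-hand side is nonnegative since $\bar\nu$ minimizes $J(\cdot,\bar\mu)$, while the right-hand side is nonpositive since $\nu$ minimizes $J(\cdot,\mu)$; hence both sides vanish. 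This gives the key fact: $\nu \land^{\text{\tiny{$E$}}} \bar\nu \in \argmin_E J(\cdot,\mu)$ and $\nu \lor^{\text{\tiny{$E$}}} \bar\nu \in \argmin_E J(\cdot,\bar\mu)$. Taking $\mu = \bar\mu$ shows in particular that $\argmin_E J(\cdot,\mu)$ is stable under $\land^{\text{\tiny{$E$}}}$ and $\lor^{\text{\tiny{$E$}}}$.

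From here parts a) and b) are immediate. For a), writing $\eta^1 = p\nu^1,\ \eta^2 = p\nu^2 \in R(\mu)$ with $\nu^1,\nu^2$ minimizers, the elements $\eta^\wedge := p(\nu^1 \land^{\text{\tiny{$E$}}} \nu^2)$ and $\eta^\vee := p(\nu^1 \lor^{\text{\tiny{$E$}}} \nu^2)$ lie in $R(\mu)$ by the stability just shown, and condition \ref{assumption.existence.operation} of Assumption~\ref{assumption} yields $\eta^\wedge \leq^{\text{\tiny{$L$}}} \eta^1 \wedge^{\text{\tiny{$L$}}} \eta^2$ and $\eta^\vee \geq^{\text{\tiny{$L$}}} \eta^1 \vee^{\text{\tiny{$L$}}} \eta^2$. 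For b), given $\mu \leq^{\text{\tiny{$L$}}} \bar\mu$ and arbitrary minimizers $\nu$ of $J(\cdot,\mu)$ and $\bar\nu$ of $J(\cdot,\bar\mu)$, the key fact together with condition \ref{assumption.existence.operation} gives $p\nu \leq^{\text{\tiny{$L$}}} p(\nu \lor^{\text{\tiny{$E$}}} \bar\nu) \in R(\bar\mu)$, so $p\nu \leq^{\text{\tiny{$L$}}} \sup R(\bar\mu)$; taking the supremum over all such $\nu$ yields $\sup R(\mu) \leq^{\text{\tiny{$L$}}} \sup R(\bar\mu)$. Dually, $\inf R(\mu) \leq^{\text{\tiny{$L$}}} p(\nu \land^{\text{\tiny{$E$}}} \bar\nu) \leq^{\text{\tiny{$L$}}} p\bar\nu$ holds for every $\bar\nu$, whence $\inf R(\mu) \leq^{\text{\tiny{$L$}}} \inf R(\bar\mu)$.

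The genuine work is in c), and I expect the main obstacle to lie there: $\sup R(\mu)$ and $\inf R(\mu)$ exist in $L$ by completeness, but one must show they are \emph{attained} as projections of minimizers. Here I would combine the directedness from a) with the Dedekind super completeness of $L$ (Definition~\ref{def.dedekind}) to pass from $R(\mu)$ to a countable subset $\{\eta_n\}$ with $\sup_n \eta_n = \sup R(\mu)$, and then, applying directedness inductively, replace it by a \emph{nondecreasing} sequence $(\zeta_n) \subset R(\mu)$ with $\sup_n \zeta_n = \sup R(\mu)$. Writing $\zeta_n = p\nu^n$ with $\nu^n \in \argmin_E J(\cdot,\mu)$, the sequence $(p\nu^n)$ is nondecreasing, so Assumption~\ref{assumption.bestresponsemap.nonempty}, condition \ref{assumption.bestresponsemap.nonempty.continouty projection}, furnishes a subsequence and a limit $\nu \in \argmin_E J(\cdot,\mu)$ with $p\nu = \sup_j p\nu^{n_j} = \sup R(\mu)$, i.e.\ $\sup R(\mu) = p\nu \in R(\mu)$. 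The statement for $\inf R(\mu)$ follows by the dual construction, producing a nonincreasing sequence and invoking the $\inf$-version of condition \ref{assumption.bestresponsemap.nonempty.continouty projection}. The delicate point throughout is that directedness only controls \emph{finite} meets and joins, so super Dedekind completeness is precisely what reduces the possibly uncountable extremal problem to a monotone sequence to which the sequential closedness condition applies.
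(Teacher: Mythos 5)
Your proposal is correct and follows essentially the same route as the paper's proof: the same submodularity chain $0\leq J(\nu\lor^{\text{\tiny{$E$}}}\bar\nu,\bar\mu)-J(\bar\nu,\bar\mu)\leq\cdots\leq J(\nu,\mu)-J(\nu\land^{\text{\tiny{$E$}}}\bar\nu,\mu)\leq 0$ drives a), b), and the stability of $\argmin$, and c) is handled exactly as in the paper by combining Dedekind super completeness with an inductively built monotone sequence and the sequential closedness in Assumption \ref{assumption.bestresponsemap.nonempty}. The only cosmetic difference is that you build the monotone sequence in $L$ via the directedness from a), whereas the paper constructs it explicitly in $E$ via $\nu^{\land,n+1}:=\nu^{\land,n}\land^{\text{\tiny{$E$}}}\nu^{n+1}$; both yield projections of minimizers, so the argument is the same.
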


\begin{proof}\ 
 Let $\mu,\overline{\mu}\in L$ with $\mu\leq^{\text{\tiny{$L$}}} \overline{\mu}$. Moreover, let $\eta^1\in R(\mu)$ and $\eta^2\in R(\overline \mu)$. Then, by definition of $R(\mu)$ and $R(\overline \mu)$, there exists $\nu^1\in \argmin_{\nu\in E}J(\nu, \mu)$ and $\nu^2\in \argmin_{\nu\in E}J(\overline \mu,\nu)$ with $p\nu^1=\eta^1$ and $p\nu^2=\eta^2$. 
 By Condition \ref{assumption.existence.operation} in Assumption \ref{assumption}, we can define $\nu^\wedge, \nu^\vee  \in E$ by $\nu^\wedge:= \nu^1 \wedge^{\text{\tiny{$E$}}} \nu^2$ and $\nu^\vee:= \nu^1 \vee^{\text{\tiny{$E$}}} \nu^2$, leading to $p\nu^\wedge \leq^{\text{\tiny{$L$}}} p\nu^1 \wedge^{\text{\tiny{$L$}}} p\nu^2$ and $p\nu^\vee \geq^{\text{\tiny{$L$}}} p\nu^1 \vee^{\text{\tiny{$L$}}} p\nu^2$. The optimality of $\nu^1$ and $\nu^2$ for $\eta^1$ and $\eta^2$, respectively, together with Condition \ref{assumption.submod.J} in Assumption \ref{assumption}, imply that
 \[
   0\leq J(\overline\mu,\nu^\vee)-J(\overline\mu,\nu^2)\leq J(\mu,\nu^\vee)-J(\mu,\nu^2)\leq J(\mu,\nu^1)-J(\mu,\nu^\wedge)\leq 0.
  \]
  This shows that $\eta^\wedge:=p\nu^\wedge\in R(\mu)$ and $\eta^\vee:=p\nu^\vee\in R(\overline\mu)$. Now, the statement in a) directly follows by choosing $\mu=\overline\mu$. Moreover, $\eta^\wedge\in R(\mu)$ and $\eta^\vee\in R(\overline\mu)$ imply that
  \[
   \inf R(\mu)\leq^{\text{\tiny{$L$}}} \eta^\wedge =p\nu^\wedge \leq^{\text{\tiny{$L$}}} p\nu^2=\eta^2\quad \text{and}\quad \eta^1=p\nu^1 \leq^{\text{\tiny{$L$}}}  p\nu^\vee =\eta^\vee \leq^{\text{\tiny{$L$}}} \sup R(\overline\mu). 
  \]
  Taking the infimum over all $\eta^2\in R(\overline\mu)$ and the supremum over all $\eta^1\in R(\mu)$ yields the assertion in b).
  
  We now prove the claim in c) for the infimum. Since $L$ is, by assumption, Dedekind super complete, there exists a sequence $(\mu^n)_n \subset R(\mu)$ such that $\inf R(\mu)= \inf_n \mu^n$. Therefore we can find a sequence $(\nu^n)_n \subset \argmin J(\cdot, \mu)$ with $p \nu^n= \mu^n$. We can inductively define a new sequence $(\nu^{\land,n})_n$ by setting 
  $$
  \nu^{\land,1}:= \nu^1 \quad \text{and}\quad
  \nu^{\land, n+1} := \nu^{\land, n} \land^{\text{\tiny{$E$}}} \nu^{n+1}, \quad n\geq 1. 
  $$
 As shown in the proof of part a), we have that $\nu^{\land, 1} \in \argmin J(\cdot, \mu)$, and by induction, we deduce that $\nu^{\land, n} \in \argmin J(\cdot, \mu)$ for each $n \in \mathbb{N}$. 
 Define now the sequence $(\mu^{\land,n})_n$ setting, $\mu^{\land, n}:= p\nu^{\land,n}$ for each $n \in \mathbb{N}$,
 and note that $\mu^{\land,n} \in R(\mu)$. 
 Moreover, Condition \ref{assumption.existence.operation} in Assumption \ref{assumption} implies that
 $$ 
\mu^{\land,n+1} = p(\nu^{\land,n+1}) = p(\nu^{\land, n} \land^{\text{\tiny{$E$}}} \nu^{n+1} ) \leq^{\text{\tiny{$L$}}} p\nu^{\land, n} \land^{\text{\tiny{$L$}}} p \nu^{ n+1} = \mu^{\land,n} \land^{\text{\tiny{$L$}}} \mu^{n+1},
$$
which, at the same time, implies that $(\mu^{\land,n})_n$ is nonincreasing in $L$ and that $\mu^{\land,n} \leq^{\text{\tiny{$L$}}} \mu^n$ for each $n\in \mathbb{N}$. Hence we have
$$
\inf R(\mu)= \inf_n \mu^n = \inf_n \mu^{\land,n}.
$$
Moreover, by Assumption \ref{assumption.bestresponsemap.nonempty}, there exists a subsequence $(n_j)_{j \in \N}$ and a limit point $\nu \in \argmin_E J(\cdot, \mu)$ such that 
$$
p\nu = \inf _j p(\nu^{\land, n_j}) = \inf_j \mu^{\land, n_j} = \inf R(\mu), 
$$
so that $\inf R(\mu) \in R(\mu)$.
\end{proof}

\subsection{Existence and approximation of MFG solutions}
For the approximation of MFG solutions, we will enforce the following additional continuity requirements (see again Remark \ref{remark assumptions non topological} for a generalization).
\begin{assumption}
\label{assumption.general.approximation}
For any sequence  $(\nu^n)_n \subset \{ \argmin _E J(\cdot, \mu)\,|\, \mu \in L \}$ such that $p (\nu^n)$ is increasing or decreasing in $L$, there exists a subsequence $(n_j)_{j \in \N}$ and $\nu \in E$ such that $\nu^{n_j}$ converges to $\nu$ as $j \to \infty$ and $ p\nu = \sup_j p(\nu^{n_j})$ or $p\nu = \inf_j p(\nu^{n_j})$, respectively. 
    
Moreover, respectively, for any nondecreasing or nonincreasing sequence $(\mu^n)_n \subset L$, we assume that
\begin{enumerate}
    \item for any $\nu\in E$, $J(\nu,\sup_n \mu) = \lim_n J(\nu, \mu^n)$ or $J(\nu,\inf_n \mu) = \lim_n J(\nu,\mu^n)$), 
    \item for any sequence $(\nu^n)_n \subset E$ converging to $\nu \in E$, we have that $J(\nu,\sup_n \mu) \leq \liminf_n J(\nu^n,\mu^n)$ or $J(\nu,\inf_n \mu) \leq \liminf_n J(\nu^n,\mu^n)$.  
\end{enumerate}
\end{assumption}

We can then state the main result of this section.

\begin{theorem}
\label{theorem.main.general} Under Assumptions \ref{assumption.bestresponsemap.nonempty} and \ref{assumption} we have that
 \begin{enumerate}
  \item[a)]\label{theorem.main.general.existence} the set of mean field game equilibria $M$ is nonempty with $\inf M\in M$ and $\sup M\in M$. If $R(\mu)$ is a singleton for all $\mu\in L$, then $M$ is a nonempty complete lattice.
  \end{enumerate}
  Moreover, if Assumption \ref{assumption.general.approximation} is satisfied, then
  \begin{enumerate}
  \item[b)]\label{theorem.main.general.convergence.up} the learning procedure $\underline{\mu}^0:=\inf L$ and $\underline{\mu}^n:=\inf  R(\underline{\mu}^{n-1})$, for $n\in \N$, is monotone increasing and it converges to $\inf M$, 
  \item[c)]\label{theorem.main.generalconvergence.down} the learning procedure $\overline{\mu}^0:=\sup L$ and $\overline{\mu}^n:=\sup R(\overline{\mu}^{n-1})$, for $n\in \N$, is monotone decreasing and it converges to $\sup M$.
 \end{enumerate}
\end{theorem}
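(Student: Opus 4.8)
The plan is to reduce the set-valued fixed point problem for $R$ to two single-valued monotone fixed point problems and then invoke Tarski's fixed point theorem. Define the selection maps $r_*,r^*\colon L\to L$ by $r_*(\mu):=\inf R(\mu)$ and $r^*(\mu):=\sup R(\mu)$. By part c) of Lemma \ref{lemma.bestresponse} both are well defined, taking values in $R(\mu)$, and by part b) both are order preserving on the complete lattice $L$. Consequently every fixed point of $r_*$ or of $r^*$ lies in $M$: indeed $\mu=r_*(\mu)=\inf R(\mu)\in R(\mu)$ forces $\mu\in M$, and symmetrically for $r^*$. Applying Tarski's fixed point theorem to $r_*$ and to $r^*$ on $L$ then shows that the fixed point sets of $r_*$ and of $r^*$ are nonempty complete lattices; in particular $M\neq\varnothing$.

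To pin down the extreme equilibria I would use the characterization of least and greatest fixed points via pre- and post-fixed points. If $\mu\in M$ then $\mu\in R(\mu)$ gives $r_*(\mu)\leq^{\text{\tiny{$L$}}}\mu\leq^{\text{\tiny{$L$}}}r^*(\mu)$, so $M$ is contained in the set $\{\mu:r_*(\mu)\leq^{\text{\tiny{$L$}}}\mu\}$ of post-fixed points of $r_*$ and in the set $\{\mu:\mu\leq^{\text{\tiny{$L$}}}r^*(\mu)\}$ of pre-fixed points of $r^*$. Hence the least fixed point $\underline\mu^\ast$ of $r_*$ satisfies $\underline\mu^\ast\leq^{\text{\tiny{$L$}}}\inf M$, while $\underline\mu^\ast\in M$ gives the reverse inequality, so $\inf M=\underline\mu^\ast\in M$; symmetrically the greatest fixed point of $r^*$ equals $\sup M\in M$. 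When every $R(\mu)$ is a singleton, $r_*=r^*=:r$ and $M=\mathrm{Fix}(r)$, so Tarski's theorem directly yields that $M$ is a nonempty complete lattice. This settles a).

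For b) I would analyze the iteration $\underline\mu^n=r_*(\underline\mu^{n-1})$ started at $\underline\mu^0=\inf L$. Monotonicity is immediate: $\underline\mu^0=\inf L\leq^{\text{\tiny{$L$}}}\underline\mu^1$ since $\inf L$ is the bottom element, and monotonicity of $r_*$ propagates this by induction, so $(\underline\mu^n)_n$ is nondecreasing. Because $\inf M=\underline\mu^\ast$ is a fixed point of $r_*$, the same induction (starting from $\underline\mu^0\leq^{\text{\tiny{$L$}}}\inf M$) shows $\underline\mu^n\leq^{\text{\tiny{$L$}}}\inf M$ for all $n$, whence $\mu^\infty:=\sup_n\underline\mu^n\leq^{\text{\tiny{$L$}}}\inf M$. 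It then suffices to prove $\mu^\infty\in M$, for then $\mu^\infty\geq^{\text{\tiny{$L$}}}\inf M$ and equality follows. This is precisely where Assumption \ref{assumption.general.approximation} enters, and it is the main obstacle. Choosing $\nu^n\in\argmin_E J(\cdot,\underline\mu^{n-1})$ with $p\nu^n=\underline\mu^n$ (Lemma \ref{lemma.bestresponse} c)), the nondecreasing sequence $(p\nu^n)_n$ admits, by the first part of Assumption \ref{assumption.general.approximation}, a subsequence $\nu^{n_j}\to\nu\in E$ with $p\nu=\sup_j p\nu^{n_j}=\mu^\infty$. For arbitrary $\tilde\nu\in E$, combining the optimality $J(\nu^{n_j},\underline\mu^{n_j-1})\leq J(\tilde\nu,\underline\mu^{n_j-1})$ with the lower semicontinuity (part 2) and the continuity along monotone flows (part 1) of Assumption \ref{assumption.general.approximation} gives
\[
J(\nu,\mu^\infty)\leq\liminf_j J(\nu^{n_j},\underline\mu^{n_j-1})\leq\liminf_j J(\tilde\nu,\underline\mu^{n_j-1})=J(\tilde\nu,\mu^\infty).
\]
Hence $\nu\in\argmin_E J(\cdot,\mu^\infty)$ and $\mu^\infty=p\nu\in R(\mu^\infty)$, so $\mu^\infty\in M$ and $\mu^\infty=\inf M$.

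Part c) is entirely symmetric: the iteration $\overline\mu^n=r^*(\overline\mu^{n-1})$ from $\overline\mu^0=\sup L$ is nonincreasing, bounded below by $\sup M$, and its limit $\inf_n\overline\mu^n$ is shown to lie in $M$ by repeating the argument with the nonincreasing branches of Assumption \ref{assumption.general.approximation}, whence it equals $\sup M$. The crux throughout is the passage to the limit in the optimality inequality along a monotone flow of measures: the lattice structure and Tarski's theorem deliver existence and the extremal equilibria for free, but upgrading the sequential iterates to a genuine fixed point requires exactly the semicontinuity-type conditions of Assumption \ref{assumption.general.approximation}, since without them the plain iteration need not reach the transfinite Tarski limit.
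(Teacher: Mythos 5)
Your proof is correct and follows essentially the same route as the paper: Tarski's fixed point theorem applied to the monotone selections $\mu\mapsto\inf R(\mu)$ and $\mu\mapsto\sup R(\mu)$ for part a), and the monotone iteration combined with the compactness and semicontinuity conditions of Assumption \ref{assumption.general.approximation} to pass to the limit in the optimality inequality for parts b) and c). The only cosmetic difference is that you identify $\inf M$ with the least fixed point of $\inf R$ via pre-/post-fixed points and use this to bound the iterates, whereas the paper establishes minimality by inducting $\underline{\mu}^n\leq^{\text{\tiny{$L$}}}\mu$ directly against an arbitrary equilibrium $\mu\in M$; both arguments are equivalent.
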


\begin{proof}\
 \begin{enumerate}
  \item[a)] Follows directly from Lemma \ref{lemma.bestresponse} together with Tarski's fixed point theorem applied to the maps $\mu\mapsto \inf R(\mu)$ and $\mu\mapsto \sup R(\mu)$.
  \item[b)] By Lemma \ref{lemma.bestresponse}, it follows that the sequence $(\underline\mu^n)_{n\in \N_0}$ is increasing. 
  By completeness of the lattice $L$, we can set $\mu_* := \sup_n \underline \mu ^n$. We next want to prove that $\mu_* = \inf M$.
  
  For any $n\in \N$, by Lemma \ref{lemma.bestresponse.increasing} and the definition of $\underline\mu^n$, we can find $$\nu^n \in \argmin_E J( \cdot, \underline \mu ^{n-1})$$ with $p \nu^n = \underline \mu ^n$.
  By Assumption \ref{assumption.general.approximation}, we can take a subsequence $(\nu^{n_j})_j$ and a limit point $\nu_*$ such that  $ \nu^{n_j}$ converges to $\nu_*$ and $ p \nu^{n_j}$  converges to $p \nu_*$ as $j \to \infty$. This implies that $p \nu_* = \mu_*$. Moreover, we have
  \begin{equation*}
    J(\nu^{{n_j}}, \underline \mu ^{n_j -1}) \leq J(\nu, \underline \mu^{n_j -1}), \quad \text{for any $\nu \in E$ and $j \in \mathbb N$}.
  \end{equation*}
  Exploiting the continuity properties of $J$ in Assumption \ref{assumption.general.approximation}, we may pass to the limit as $j \to \infty$ in the previous inequality, and obtain that
  \begin{equation*}
    J(\nu_*, \mu_*) \leq J(\nu, \mu_*), \quad \text{for any $\nu \in E$ and $j \in \mathbb N$}.
  \end{equation*}
  This, in turn, implies that $\nu_* \in \argmin_E J(\cdot, \mu_*)$, so that  $\mu_* = p \nu_* \in R(\mu_*)$. Therefore, $\mu_*$ is a MFG solution.
  
  We next want to prove that $\mu_*$ is the minimal MFG solution.
  Let $\mu\in M$ be another mean field game equilibrium. 
  Then, $\underline\mu^0 \leq^{\text{\tiny{$L$}}} \mu$, which, by Lemma \ref{lemma.bestresponse}, implies that $\underline\mu^1=R(\underline\mu^0) \leq^{\text{\tiny{$L$}}} R(\mu)$. 
  Inductively, one obtains that $\underline\mu^n \leq^{\text{\tiny{$L$}}} \mu$ for all $n\in \N_0$, which implies that $\mu_* \leq^{\text{\tiny{$L$}}} \mu$. 
  Since $\mu_*\in M$, it follows that $\mu_*=\inf M$.
  \item[c)] Follows by arguments analogous to the one used in the proof of part b).
 \end{enumerate}
\end{proof}

The following remark proposes a set of purely order-theoretical conditions, alternative to those in Assumptions \ref{assumption.bestresponsemap.nonempty} and \ref{assumption.general.approximation}, respectively.
These will be employed in the proof of Proposition \ref{FiniteMarkovProp}.

\begin{remark}\label{remark assumptions non topological}
The proofs of Lemma \ref{lemma.bestresponse} and Theorem \ref{theorem.main.general} show that all stated properties remain valid if Assumption \ref{assumption.bestresponsemap.nonempty} is replaced by the following purely order-theoretic assumptions:
\begin{itemize}
    \item  For every $\mu\in L$, the set $\argmin_{E} J(\cdot, \mu)$ is nonempty and the set $R(\mu)$ is closed under monotone sequences; 
    that is, for any nondecreasing or nonincreasing sequence  $(\mu^n)_n \subset R(\mu)$, there exists $\nu \in \argmin_{E} J(\cdot, \mu)$ such that $ p\nu = \sup_n \mu^n$ or $p\nu = \inf_n \mu^n$, respectively;
\end{itemize}
and if Assumption \ref{assumption.general.approximation} is replaced by the following two order-theoretic conditions:
\begin{itemize}
    \item  For any $\nu\in E$, $J(\nu,\cdot)$ is continuous over monotone sequences in $L$;
    \item For any sequence $(\nu^n, \mu^n)_n \in E \times L$ such that $p \nu^n$ and $\mu^n$ are nondecreasing or nonincreasing, there exist $\nu \in E$ such that $p\nu = \sup_n p\nu^n$ or $p\nu = \inf_n p\nu^n$
    and $J(\nu,\sup_n \mu^n) \leq \liminf_n J(\nu^n,\mu^n)$ or $J(\nu,\inf_n \mu^n) \leq \liminf_n J(\nu^n,\mu^n)$, respectively.
\end{itemize}
\end{remark}


\section{Lattices of measures related to submodular MFG}
\label{section lattices}

In this section, we discuss lattices of measures arising in the context of submodular MFGs. Again, we refer to the Appendix \ref{append.lattice} for the lattice-theoretic preliminaries. Throughout this section, let $\mathcal B(\R)$ denote the Borel $\sigma$-algebra on $\R$ and $\mathcal M_{\leq 1}$ denote the set of all sub-probability measures, i.e., the set of all (nonnegative) measures on $\mathcal B(\R)$ with $\mu(\R)\leq 1$. We identify a distribution $\mu\in \mathcal M_{\leq 1} $ by its survival function $\mu_0$, i.e., we identify
\[
 \mu(s)=\mu_0(s):=\mu\big((s,\infty)\big)\quad \text{for all }s\in \R.
\]
 On $\mathcal M_{\leq 1} $, we consider the partial order $\leq_{\rm st}$ arising from \textit{first order stochastic dominance}, given by
\[
 \mu\leq_{\rm st}\nu \quad \text{if and only if}\quad \mu_0(s)\leq \nu_0(s)\quad \text{for all }s\in \R.
\]
Recall that, for $\mu,\nu\in \mathcal M_{\leq 1} $, $\mu\leq_{\rm st}\nu$ if and only if
\begin{equation}\label{eq.stochorder}
 \int_\R h(x)\, {\rm d}\mu (x)\leq \int_\R h(x)\, {\rm d}\nu(x)
\end{equation}
for all nondecreasing functions $h\colon \R\to [0,\infty)$. In particular, $\mu(\R)\leq \nu(\R)$. Note that \eqref{eq.stochorder} holds for all nondecreasing functions $h\colon \R\to \R$ if and only if $\mu\leq_{\rm st}\nu$ and $\mu(\R)=\nu(\R)$. For a detailed discussion on the properties of the partial order $\leq_{\rm st}$ for probability measures, we refer to \cite[Section 1.A]{ShakedShanthikumar07}.

By identifying a sub-probability measure $\mu$ with its survival function $\mu_0$, the set $\mathcal M_{\leq 1} $ coincides with the set of all nonincreasing right-continuous functions $F\colon \R\to [0,\infty)$ with $\lim_{s\to -\infty} F(s)\leq 1$ and $\lim_{s\to \infty} F(s)=0$. In particular, the partial order $\leq_{\rm st}$ induces a lattice structure on $\mathcal M_{\leq 1} $ via
 \[
  \qquad \quad \big(\mu\vee_{\rm st}\nu\big)(s):=\mu_0(s)\vee \nu_0(s)\quad \text{and}\quad \big(\mu\wedge_{\rm st}\nu\big)(s):=\mu_0(s)\wedge \nu_0(s)\quad \text{for all }s\in \R.
 \]
 Further, we would like to recall that the weak convergence coincides with the pointwise convergence of survival functions at every continuity point, i.e., $\mu^n\to \mu$ weakly as $n\to \infty$ if and only if
 \[
  \qquad\mu_0^n(s)\to \mu_0(s)\quad\text{as }n\to \infty\quad \text{for every continuity point }s\in \R\text{ of }\mu_0,
 \]
 and that the weak topology  on $\mathcal M_{\leq 1} $, i.e., the topology induced by the weak convergence of sub-probability measures, is metrizable.
 As a consequence, the lattice operations $(\mu,\nu)\mapsto \mu \vee_{\rm st} \nu $ and $(\mu,\nu)\mapsto \mu \wedge_{\rm st} \nu $ are continuous maps $\mathcal M_{\leq 1} \times \mathcal M_{\leq 1} \to \mathcal M_{\leq 1} $, and the weak topology is finer than the interval topology (see Definition \ref{def.intervaltop} in Appendix \ref{append.lattice}), since every closed interval is weakly closed.
 
\begin{lemma}\label{minimaandmaxima}
Every bounded and nondecreasing or nonincreasing sequence $(\mu^n)_{n\in \N}\subset \mathcal M_{\leq 1} $ converges weakly to its supremum or infimum w.r.t.~$\leq_{\rm st}$, respectively.
\end{lemma}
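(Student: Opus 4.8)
The plan is to argue entirely at the level of survival functions. Recall that $\mu\in\mathcal M_{\leq 1}$ is identified with its survival function $\mu_0$, a nonincreasing right-continuous function with $\lim_{s\to-\infty}\mu_0(s)\leq 1$ and $\lim_{s\to\infty}\mu_0(s)=0$, and that $\mu\leq_{\rm st}\nu$ if and only if $\mu_0\leq\nu_0$ pointwise. I treat the nondecreasing case; the nonincreasing one is analogous (and slightly simpler, since a lower bound is always available). A nondecreasing sequence $(\mu^n)_n$ corresponds to survival functions with $\mu_0^n(s)\leq\mu_0^{n+1}(s)$ for all $s\in\R$, so that the pointwise limit
\[
 g(s):=\sup_{n}\mu_0^n(s)=\lim_{n}\mu_0^n(s),\qquad s\in\R,
\]
exists. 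My candidate for $\sup_n\mu^n$ is the sub-probability measure $\mu^\ast$ with survival function $\mu^\ast_0=g$.

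The first substantial step is to check that $g$ is the survival function of an element of $\mathcal M_{\leq 1}$. Monotonicity of $g$ in $s$ is immediate as a supremum of nonincreasing functions, and $\lim_{s\to-\infty}g(s)\leq 1$ since every $\mu_0^n\leq 1$. Here the boundedness hypothesis enters: if $\nu\in\mathcal M_{\leq 1}$ is an upper bound of the sequence, then $g\leq\nu_0$, whence $\lim_{s\to\infty}g(s)\leq\lim_{s\to\infty}\nu_0(s)=0$; without such a bound mass could escape to $+\infty$ (e.g.\ $\mu^n=\delta_n$) and no supremum would exist in $\mathcal M_{\leq 1}$. The delicate point, and the step I expect to be the main obstacle, is right-continuity of $g$, since an increasing limit of right-continuous functions need not be right-continuous in general. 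Using that each $\mu_0^n$ is right-continuous and that $g$ is nonincreasing, one has, for every $s$,
\[
 \inf_{t>s}g(t)\geq \inf_{t>s}\mu_0^n(t)=\mu_0^n(s)\quad\text{for each }n,
\]
and taking the supremum over $n$ gives $\inf_{t>s}g(t)\geq g(s)$; the reverse inequality holds by monotonicity, so $g(s^+)=g(s)$ and $g$ is right-continuous.

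It then remains to identify $\mu^\ast$ as the supremum and to upgrade pointwise to weak convergence. Since $g\geq\mu_0^n$ pointwise, $\mu^\ast$ is an upper bound for $(\mu^n)_n$; and if $\nu$ is any upper bound, then $\nu_0\geq\mu_0^n$ for all $n$, hence $\nu_0\geq g=\mu^\ast_0$, that is $\nu\geq_{\rm st}\mu^\ast$, so $\mu^\ast=\sup_n\mu^n$ with respect to $\leq_{\rm st}$. Finally, $\mu_0^n(s)\to g(s)$ holds for \emph{every} $s\in\R$, in particular at every continuity point of $\mu^\ast_0$; by the recalled characterization of weak convergence through pointwise convergence of survival functions at continuity points, $\mu^n\to\mu^\ast$ weakly. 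For the nonincreasing case one runs the same argument with $g(s):=\inf_n\mu_0^n(s)$, where right-continuity follows by simply exchanging the two infima $\inf_{t>s}$ and $\inf_n$, and boundedness below is automatic.
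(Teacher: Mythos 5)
Your treatment of the nondecreasing case is essentially the paper's argument: the paper observes that a nonincreasing function $\R\to\R$ is right-continuous if and only if it is lower semicontinuous, so the pointwise supremum of the survival functions is again right-continuous and is therefore itself the survival function of the supremum; your explicit verification unwinds the same fact. One notational slip, though: for a nonincreasing function, right-continuity at $s$ reads $\sup_{t>s}\mu_0^n(t)=\mu_0^n(s)$, not $\inf_{t>s}\mu_0^n(t)=\mu_0^n(s)$ (that infimum equals $\lim_{t\to\infty}\mu_0^n(t)=0$). With $\sup_{t>s}$ in place of $\inf_{t>s}$ your chain of inequalities for the supremum case is correct.

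The genuine gap is in the final two sentences, on the infimum case. The pointwise infimum $g(s)=\inf_n\mu_0^n(s)$ of right-continuous nonincreasing functions need \emph{not} be right-continuous, and no exchange of limits rescues this: take $\mu^n=\delta_{1/n}$, so $\mu_0^n=\mathds{1}_{(-\infty,1/n)}$ is nonincreasing in $n$ and bounded below by the zero measure, yet $g=\mathds{1}_{(-\infty,0]}$ fails to be right-continuous at $0$. (With the corrected $\sup_{t>s}$ one would have to interchange a supremum with $\inf_n$, which only yields the trivial inequality $\sup_{t>s}g(t)\leq g(s)$.) Consequently $g$ is not the survival function of any element of $\mathcal M_{\leq 1}$, and the infimum w.r.t.\ $\leq_{\rm st}$ is instead the measure whose survival function is the right-continuous (equivalently, lower semicontinuous) envelope $g_*(s)=\sup_{\delta>0}g(s+\delta)$ --- in the example, $\inf_n\delta_{1/n}=\delta_0$ with survival function $\mathds{1}_{(-\infty,0)}$. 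This is exactly the step the paper adds: it introduces the lsc-envelope, notes that it differs from $g$ only at the (countably many) discontinuity points of $g$, and concludes that $\mu_0^n\to g_*$ at every continuity point of $g_*$, which still gives the weak convergence. Your argument needs this extra step; as written, the candidate infimum in the nonincreasing case is not even a sub-probability measure.
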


\begin{proof}
First, observe that a nonincreasing function $\R\to \R$ is right-continuous if and only if it is lower semicontinuous. Hence, for every sequence $(\mu^n)_{n\in \N}\in \mathcal M_{\leq 1} $, which is bounded above, the supremum $\sup_{n\in \N}\mu^n$ w.r.t.~$\leq_{\rm st}$ exists, and it is exactly the pointwise supremum of the survival functions $(\mu_0^n)_{n \in \mathbb{N}}$.

For a nonincreasing function $F\colon \R\to \R$, we define its lsc-envelope $F_*\colon \R\to \R$ by
 \[
  F_*(s):=\sup_{\delta >0}F(s+\delta)\quad \text{for }s\in \R.
 \]
 Then, $F(s)\geq F_*(s)\geq F(s+\varepsilon)$ for all $s\in \R$ and $\varepsilon >0$. That is, $F_*$ differs from $F$ only at discontinuity points of $F$. For a sequence $(\mu^n)_{n\in \N}\in \mathcal M_{\leq 1}$, which is bounded below, the infimum $\inf_{n\in \N}\mu^n$ w.r.t.~$\leq_{\rm st}$ is then given by the lsc-envelope of the pointwise infimum of the survival functions $(\mu_0^n)_{n\in \N}$. Since the weak convergence of a sequence of sub-probability measures coincides with the pointwise convergence of the related survival functions at every continuity point, the assertion follows.
\end{proof}

Let $(S,\mathcal S,\pi)$ be a $\sigma$-finite measure space. We denote the Borel $\sigma$-algebra of the weak topology by $\mathcal B(\mathcal M_{\leq 1} )$ and the lattice of all equivalence classes of $\mathcal S$-$\mathcal B(\mathcal M_{\leq 1} )$-measurable functions $S\to \mathcal M_{\leq 1} $ by $L_{\rm st}^0=L^0(S,\mathcal S,\pi;\mathcal M_{\leq 1})$. An arbitrary element $\mu$ of $L_{\rm st}^0$ will be denoted in the form $\mu=(\mu_t)_{t\in S}$. On $L_{\rm st}^0$ we consider the order relation $\leq_{L_{\rm st}^0}$, given by $\mu \leq_{L_{\rm st}^0} \nu$ if and only if $\mu_t \leq_{\rm st} \nu_t$ for $\pi$-a.a.\ $t\in S$.

In the sequel, we consider a family $(L_n)_{n\in \N}$ of Dedekind $\sigma$-complete sublattices of $\mathcal M_{\leq 1}$, which correspond to a countable number of constraints, and a family $(B_n)_{n\in \N}\subset \mathcal S$ of measurable sets, on which the constraints in terms of the family $(L_n)_{n\in\N}$ should be satisfied. Before we state the main result of this section, we list some possible choices for measurable spaces $(S,\mathcal S,\pi)$, Dedekind $\sigma$-complete lattices $L=L_n$, and measurable sets $B=B_n\in \mathcal S$ for $n\in \N$.

\begin{example}\label{ex.lattices}\
 \begin{enumerate}
 \item[a)] The measure space $(S,\mathcal S,\pi)$ can be, e.g.,
 \begin{itemize}
  \item  $S=[0,T]$, $\mathcal S=\mathcal B([0,T])$, $\pi=\delta_0+\lambda_{[0,T]}$, also with $[0,\infty)$ instead of $[0,T]$ and $e^{-\delta t}{\rm d}t$ instead of $\lambda$,
  \item $\Omega\times [0,T]$, $\mathcal S$ the $\sigma$-algebra of all predictable processes, and $\pi=\P\otimes (\delta_0+\lambda_{[0,T]})$.
 \end{itemize}
 \item[b)] The following are possible choices for $L=L_n$:
 \begin{itemize}
 \item The simplest choice is $L=\mathcal M_{\leq 1}$ or $L=\{\mu\in \mathcal M_{\leq 1}\, |\, \mu(\R)=1\}$.
  \item Another choice is $L=\{\mu\in \mathcal M_{\leq 1}\, |\, \underline \mu\leq_{\rm st}\mu \leq_{\rm st}\overline \mu\}$ with $\underline \mu,\overline \mu\in\mathcal M_{\leq 1}$.\ If $\underline\mu=\overline\mu=:\nu$, this results in $L=\{\nu\}$. Note that $\underline \mu\equiv 0$ is not excluded.
  \item Let $a,b\in \R$ with $a\leq b$. Then, $L=\{\mu\in \mathcal M_{\leq 1}\, |\, \supp \mu\subset [a,b]\}$ is Dedekind $\sigma$-complete. In fact, a sub-probability $\mu\in \mathcal M_{\leq 1}$ is an element of $L$ if and only if its survival function $\mu_0$ is constant on $(-\infty,a)$ and $(b,\infty)$, a property that carries over to suprema and infima of countably many elements of $L$. The same holds true if the interval $[a,b]$ is replaced by $[a,\infty)$ or $(-\infty,b]$.
  \item Another possible choice is $L=\{\delta_x\, |\, x\in \R\}$ (Dirac measures).
   \end{itemize}
 \item[c)] Possible choices for $B=B_n$ are
 \begin{itemize}
  \item $B=\{0\}$ or $B=\{0\}\times \Omega$ in order to prescribe an initial condition,
  \item $B=[0,T]$ or $B=\Omega\times [0,T]$ in order to give a condition that should be satisfied for all times $t\in [0,T]$ and in all states $\omega\in \Omega$,
  \item $B=A\times (t_1,t_2]$ in order to prescribe a condition on a certain event $A$ during the time period $ (t_1,t_2]$. 
 \end{itemize}
 \end{enumerate}
\end{example}


We consider the set
\[
 \mathcal L:=\big\{\mu\in L^0_{\rm st}\, \big|\, \forall n\in \N\colon \pi\big(\{t\in S\, |\, \mu_t\notin L_n\}\cap B_n\big)=0\big\}.
\]
That is, the set of all measurable flows $(\mu_t)_{t\in S}$ of sub-probability measures such that, for all $n\in \N$, $\mu_t\in L_n$ for $\pi$-a.a.\ $t\in B_n$. The following theorem is the main result of this section.

\begin{theorem}\label{thm.completeness}\
 \begin{enumerate}
  \item[a)] The lattice $\mathcal L$ is Dedekind super complete.
  \item[b)] If $M\subset \mathcal L$ is a nonempty set, which is bounded above or below and directed upwards or downwards, then there exist sequences $(\overline \mu^n)_{n\in \N}\subset M$ and $(\underline \mu^n)_{n\in \N}\subset M$ with $\overline \mu^n\leq_{L_{\rm st}^0} \overline\mu^{n+1}$ and $\underline\mu^n\geq_{L_{\rm st}^0} \underline\mu^{n+1}$ for all $n\in \N$ and $$\overline\mu^n\to \sup M\in \mathcal L \quad \text{and}\quad \underline\mu^n\to \inf M\in \mathcal L\quad \text{weakly }\pi\text{-a.e.\ as }n\to \infty,$$
  respectively.
 \end{enumerate}
\end{theorem}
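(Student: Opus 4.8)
The plan is to collapse the two-level order structure --- pointwise in $\mathbb{R}$ on survival functions, and $\pi$-essential on $(S,\mathcal{S},\pi)$ --- onto a single scalar maximization, exploiting that $\pi$ is $\sigma$-finite. Concretely, I would first fix a finite, non-atomic Borel measure $\rho$ on $\mathbb{R}$ with full support and a strictly positive $w\in L^1(S,\mathcal{S},\pi)$ (which exists by $\sigma$-finiteness), and introduce
\[
\Phi(\mu):=\int_{\mathbb{R}}\mu\big((s,\infty)\big)\,\rho(\mathrm{d}s)\quad(\mu\in\mathcal M_{\leq 1}),\qquad \Psi(\mu):=\int_S\Phi(\mu_t)\,w(t)\,\pi(\mathrm{d}t)\quad(\mu\in L^0_{\rm st}).
\]
The functional $\Phi$ is bounded by $\rho(\mathbb{R})$, weakly continuous (weak convergence is pointwise convergence of survival functions off a countable, hence $\rho$-null, set, so bounded convergence applies), and \emph{strictly} $\leq_{\rm st}$-monotone: if $\mu\leq_{\rm st}\nu$ and $\mu\neq\nu$, then their right-continuous survival functions differ on an interval, a set of positive $\rho$-measure, whence $\Phi(\mu)<\Phi(\nu)$. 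Consequently $\Psi$ is monotone, bounded, and inherits strict monotonicity $\pi$-a.e.\ through $w>0$.

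I would prove part~b) first, in the supremum case (the infimum case being symmetric, minimizing $\Psi$ and using lower semicontinuous envelopes of infima as in Lemma~\ref{minimaandmaxima}). Let $M\subset\mathcal{L}$ be directed upward and bounded above, and set $\alpha:=\sup_{\mu\in M}\Psi(\mu)\in[0,\rho(\mathbb{R})\,\|w\|_{L^1}]$. Starting from a maximizing sequence and using directedness repeatedly, I obtain an \emph{increasing} sequence $(\overline\mu^n)_n\subset M$ with $\Psi(\overline\mu^n)\uparrow\alpha$. By Lemma~\ref{minimaandmaxima}, for every $t$ the chain $(\overline\mu^n_t)_n$ converges weakly to its $\leq_{\rm st}$-supremum $\mu^*_t$; the map $\mu^*=(\mu^*_t)_t$ is measurable (a pointwise limit into the metrizable space $\mathcal M_{\leq 1}$) and lies in $\mathcal{L}$, because each constraint lattice $L_n$, being Dedekind $\sigma$-complete, absorbs ambient countable monotone limits. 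Bounded/monotone convergence gives $\Psi(\mu^*)=\alpha$. It remains to show $\mu^*=\sup M$. Minimality among upper bounds is clear since $\mu^*=\sup_n\overline\mu^n$ with $\overline\mu^n\in M$. To see $\mu^*$ is itself an upper bound, fix $\nu\in M$; by directedness choose $\lambda^n\in M$ with $\lambda^n\geq\overline\mu^n$ and $\lambda^n\geq\nu$, whence $\lambda^n\geq\overline\mu^n\vee_{\rm st}\nu$ and $\alpha\geq\Psi(\lambda^n)\geq\Psi(\overline\mu^n\vee_{\rm st}\nu)$. Letting $n\to\infty$ and invoking weak continuity of $\Phi$ together with bounded convergence gives $\alpha\geq\Psi(\mu^*\vee_{\rm st}\nu)\geq\Psi(\mu^*)=\alpha$; hence $\Phi(\mu^*_t\vee_{\rm st}\nu_t)=\Phi(\mu^*_t)$ for $\pi$-a.a.\ $t$, and strict monotonicity of $\Phi$ forces $\nu_t\leq_{\rm st}\mu^*_t$ $\pi$-a.e., i.e.\ $\nu\leq_{L^0_{\rm st}}\mu^*$. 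This yields precisely the monotone sequence converging weakly $\pi$-a.e.\ to $\sup M\in\mathcal{L}$ claimed in part~b).

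Part~a) I would then deduce from part~b). First, $\mathcal{L}$ is a sublattice of $L^0_{\rm st}$: finite joins and meets are computed pointwise via $\vee_{\rm st},\wedge_{\rm st}$, and each $L_n$ is a sublattice, so the constraints defining $\mathcal{L}$ are preserved. Given an arbitrary nonempty $M\subset\mathcal{L}$ bounded above, I pass to the family $M^\vee$ of all finite joins of elements of $M$; it lies in $\mathcal{L}$, is directed upward, and shares the upper bounds of $M$. Applying part~b) to $M^\vee$ produces $\sup M=\sup M^\vee\in\mathcal{L}$, realized by an increasing sequence each of whose terms is a finite join of elements of $M$. Collecting the (countably many) constituents of these terms gives a countable $M_0\subset M$ with $\sup M_0=\sup M$, which is exactly Dedekind super completeness; the bounded-below case is dual.

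The main obstacle is the passage from the pointwise-in-$\mathbb{R}$ lattice to a genuine lattice supremum over the flow: a naive pointwise-in-$s$ supremum of survival functions need not be the supremum in $L^0_{\rm st}$, nor be attained along a single monotone sequence. The scalar functional $\Phi$ is the device that resolves this --- its strict monotonicity promotes ``$\Psi$-maximality'' to genuine order-maximality, while its weak continuity lets Lemma~\ref{minimaandmaxima} transfer limits. The remaining delicate points, which I would treat carefully but expect to be routine, are the measurability of $t\mapsto\Phi(\mu_t)$ and of the limit $\mu^*$, and the verification that ambient monotone limits remain inside each $L_n$ --- the only place where Dedekind $\sigma$-completeness of the constraint lattices is genuinely used.
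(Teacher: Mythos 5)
Your proof is correct, and at its core it is the paper's proof: your functional $\Phi(\mu)=\int_{\R}\mu((s,\infty))\,\rho({\rm d}s)$ is, by Fubini, exactly $\int_{\R}G(x)\,{\rm d}\mu(x)$ for the bounded nondecreasing function $G(x)=\rho((-\infty,x))$, which for $\rho$ the standard Gaussian measure is precisely the functional $\int_\R\Phi(x)\,{\rm d}\mu(x)$ used in the paper; your weight $w$ plays the role of the paper's normalization of $\pi$ to a probability measure. The difference is purely organizational. The paper first establishes Dedekind $\sigma$-completeness of $\mathcal L$ (via Lemma \ref{minimaandmaxima} and the $\sigma$-completeness of the $L_n$), then invokes the abstract Lemma \ref{lem.blms} from \cite{nendel} --- ``$\sigma$-complete plus a strictly increasing real functional implies super complete'' --- to conclude both parts at once. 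You instead inline the proof of that lemma: you prove part b) directly by a maximizing-sequence argument for $\Psi$ over the directed set, using strict monotonicity to upgrade $\Psi$-maximality to order-maximality, and then deduce part a) from part b) by passing to the directed family of finite joins. What your version buys is self-containedness and an explicit construction of the monotone approximating sequence; what the paper's version buys is brevity by outsourcing the combinatorial work. One point to note on both sides: the claim that the ambient countable monotone supremum/infimum lands back in each constraint lattice $L_n$ uses that the internal supremum in the Dedekind $\sigma$-complete sublattice $L_n$ agrees with the supremum computed in $\mathcal M_{\leq 1}$; you flag this, the paper asserts it with the same brevity, and it holds for all the examples listed, so you are not introducing any gap the paper does not already carry. (Minor remark: non-atomicity of $\rho$ is not needed for strict monotonicity of $\Phi$ --- full support suffices, since right-continuity of survival functions makes the set where they differ contain a half-open interval.)
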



\begin{proof}
 Since every $\sigma$-finite measure can be transformed to a probability measure without changing the null-sets, we may, w.l.o.g., assume that $\pi(S)=1$. By Remark \ref{minimaandmaxima}, and since the lattices $(L_n)_{n\in \N}$ are Dedekind $\sigma$-complete, $\mathcal L$ is Dedekind $\sigma$-complete. Let $\Phi\colon \R\to (0,1)$ be the cumulative distribution function of the standard normal distribution, i.e.
 \[
  \Phi(x):=\frac{1}{\sqrt{2\pi}}\int_{-\infty}^x e^{-y^2/2}\, {\rm d}y\quad\text{for all }x\in \R.
 \]
 The map $S\to \R, \; t\mapsto \int_\R\Phi(x)\, {\rm d}\mu_t(x)$ is $\mathcal S$-$\mathcal B(\R)$-measurable for every $\mu\in L_{\rm st}^0$, since the bounded and continuous function $\Phi\colon \R\to (0,1)$ induces a continuous (w.r.t.~the weak topology) functional $\mathcal M_{\leq 1} \to \R$. Hence,
 \[
  F\colon \mathcal L\to \R,\quad \mu\mapsto \int_S\int_\R\Phi(x)\, {\rm d}\mu_t(x)\,{\rm d}\pi(t)
 \]
 is well-defined and strictly increasing, since $\Phi$ is nonnegative and strictly increasing, see, e.g., \cite[Theorem 1.A.8]{ShakedShanthikumar07}. The assertions now follow from Lemma \ref{lem.blms} and Lemma \ref{minimaandmaxima}.
\end{proof}


\section{Submodular mean field games with Markov chains} \label{SectFiniteMarkov}

Throughout this section, let $d\in \mathbb{N}\setminus \{1\}$ and $S := \{1,\ldots,d\}$ be a finite state space. We endow $S$ with the natural order, and identify elements of the set $\mathcal{P} (S)$ of all probability measures with their probability vectors according to
\[
	\mu \equiv (\mu_{1},\ldots,\mu_{d}):= \bigl(\mu(\{1\}),\ldots,\mu(\{d\})\bigr),\quad \mu\in \mathcal{P}(S). 
\]
We consider probability vectors as row vectors. On $\mathcal{P}(S)$, we introduce a partial order $\preceq$ through
\begin{align*}
	& \mu \preceq \nu \quad \text{if and only if} \quad  \sum_{i=1}^{l} \mu_{i} \geq \sum_{i=1}^{l} \nu_{i} \quad \text{for all } l\in \{1,\ldots,d\}.
\end{align*}
This corresponds to the usual stochastic order in terms of cumulative distribution functions when interpreting $S = \{1,\ldots,d\}$ as a subset of $\mathbb{R}$ with the natural order. 
As a consequence, we have
\begin{equation} \label{FiniteMarkovMeans}
	\sum_{i=1}^{d} c_{i} \mu_{i} \geq \sum_{i=1}^{d} c_{i} \nu_{i} \text{ whenever } \mu \preceq \nu \text{ and } S\ni i \mapsto c_{i} \in \mathbb{R} \text{ is nonincreasing},
\end{equation}
see, for instance, \cite[Section~1.A.1]{ShakedShanthikumar07}.

For $\mu, \nu \in \mathcal{P}(S)$, their greatest lower bound $\mu \wedge \nu$ and least upper bound $\mu \vee \nu$, respectively, are given by
\begin{align*}
	& \left( \mu \wedge \nu \right)_{j}:=\max\left\{ \sum_{k=1}^{j} \mu_{k}, \sum_{k=1}^{j} \nu_{k} \right\}-\max\left\{ \sum_{k=1}^{j-1} \mu_{k}, \sum_{k=1}^{j-1} \nu_{k} \right\}\quad \text{and}\\
	& \left( \mu \vee \nu \right)_{j} :=\min\left\{ \sum_{k=1}^{j} \mu_{k}, \sum_{k=1}^{j} \nu_{k} \right\}-\min\left\{ \sum_{k=1}^{j-1} \mu_{k}, \sum_{k=1}^{j-1} \nu_{k} \right\} \quad \text{for all }j\in \{1,\ldots,d\},
\end{align*}
where we use the convention $\sum_{k=1}^0\mu_k:=0$ and $\sum_{k=1}^0\nu_k:=0$. Then, $(\mathcal{P}(S), \preceq)$ is a complete lattice.

We consider a fixed finite time horizon $T\in \N$ and a fixed initial distribution $\eta \in \mathcal{P}(S)$. Let $L$ be the set of all flows $$\mu\colon \{0,\ldots,T\} \to \mathcal{P}(S)\quad \text{with }\mu_0=\eta,$$
and let $\leq^{L}$ be the partial order on $L$ induced by $\preceq$, that is,
\begin{align*}
 \mu \leq^{L} \nu \quad \text{if and only if}\quad\mu_t \preceq \nu_t \quad \text{for all } t\in \{0,\ldots,T\}.
\end{align*}
The greatest lower bound $\mu\wedge^L\nu$ and the least upper bound $\mu\vee^L \nu$ of two elements $\mu, \nu\in L$ are then given by
\begin{align*}
 \left( \mu \wedge^{L} \nu \right)_t := \mu_t \wedge \nu_t\quad \text{and}\quad \left( \mu \vee^{L} \nu \right)_t := \mu_t \vee \nu_t \quad\text{for all } t\in \{0,\ldots,T\}. 
\end{align*}
Observe that $(L, \leq^{L})$ is again a complete lattice.

Let $\Gamma$ be a non-empty set; $\Gamma$ represents the set of control actions for the representative player. Define the set $\mathcal U$ of $\Gamma$-valued open-loop strategies as the set of all mappings $u\colon \{0,\ldots,T-1\} \to \Gamma$.

Let $A(\gamma))_{\gamma \in \Gamma}$, be a family of transition matrices on $S$. Thus, for each $\gamma \in \Gamma$, $A(\gamma) = (a_{ij}(\gamma))_{i,j\in S}$ is a $d\times d$-matrix with nonnegative entries such that
\[
	\sum_{j=1}^{d} a_{ij}(\gamma) = 1\quad \text{for all } i\in S.
\]
For $u\in \mathcal U$, we define the flow $\mu^u$ of laws of the controlled Markov chain recursively through
\begin{align} \label{FiniteMarkovDynamics}
 \mu^u_0:=\eta\quad\text{and}\quad \mu_{t+1}^u:= \mu_t^u A(u_t)\quad \text{for all }t\in \{0,\ldots, T-1\},
\end{align}
where $\eta \in \mathcal{P}(S)$ is the fixed initial distribution and we recall that elements of $\mathcal{P}(S)$ are identified as row vectors.

Let $E$ be the subset of $\mathcal{U} \times L$ given by
\[
	E := \left\{ (u,\mu^u) \colon  u \in \mathcal{U} \right\},
\]
 and let $p\colon E \rightarrow L$ be the projection on the second component:
\[
	p(u,\mu) :=\mu\quad \text{for all }(u,\mu) \in E.
\]
Thus, $p(u,\mu^u) = \mu^u$ for all $u\in \mathcal U$.

Let $f\colon \{0,\ldots,T-1\}\times S\times \mathcal{P}(S) \times \Gamma \to \mathbb{R}$, $g\colon S \times \mathcal{P}(S) \to \mathbb{R}$ be functions, representing the running and terminal costs, respectively. Define a functional $J\colon E \times L \to \mathbb{R}$ according to
\[
	J\big((u, \mu^u), \mu\big) := \sum_{t=0}^{T-1} \sum_{i=1}^{d} f\bigl(t,i,\mu_t,u_t \bigr) \mu^u_{t,i} + \sum_{i=1}^{d} g\bigl(i,\mu_T\bigr) \mu^u_{T,i},
\]
where, for $\mu\in L$, $t\in \{0,\ldots,T\}$, and $i\in S$, $\mu_{t,i}$ denotes the $i$-th coordinate of $\mu_t$.

As in Section~\ref{section general case}, we define the best response map $R\colon L \rightarrow 2^{L}$ according to
\[
	R(\mu):=\left\{ p(\nu) \colon\nu\in \argmin\nolimits_{E} J(\cdot,\mu) \right\}.
\]

The following conditions on the solution map and $J$ will entail the assumptions of the general setup:
 
\begin{assumption}[Sufficient conditions] \label{FiniteMarkovSufficient}
Suppose that $\leq^{\mathcal{U}}$ is a partial order on $\mathcal{U}$ making $(\mathcal{U}, \leq^{\mathcal{U}})$ a complete lattice such that: 
\begin{enumerate}
	\item \label{FiniteMarkovSufficient.compatibility} For every sequence $(u_{n})_{n\in \mathbb{N}} \subseteq \mathcal{U}$,
	\begin{align*}
		& \inf_{n\in \N} \mu^{u_{n}} = \mu^{u^\wedge}\quad \text{with }u^\wedge=\inf_{n\in \N}u_n\quad \text{and}\\ 
		 &\sup_{n\in \N}\mu^{u_{n}} = \mu^{u^\vee}\quad \text{with }u^\vee=\sup_{n\in \N}u_n.
	\end{align*}

	\item \label{FiniteMarkovSufficient.submod}
	For all $\hat{\mu}, \check{\mu}\in L$ and $\hat{u}, \check{u} \in \mathcal{U}$ with  $\hat{\mu} \leq^{L} \check{\mu}$, $\hat{u} \leq^{\mathcal{U}} \check{u}$,
	\[
	J\left( (\check{u}, \mu^{\check{u}}), \check\mu \right) - J\left( (\hat{u}, \mu^{\hat{u}}), \check\mu \right) \leq  J\left( (\check{u}, \mu^{\check u}), \hat \mu \right) - J\left( (\hat{u}, \mu^{\hat{u}}), \hat{\mu} \right) .
	\]

	\item \label{FiniteMarkovSufficient.costinfsup}
	Given any $\mu \in L$, we have for all sequences $(u_{n})_{n\in \mathbb{N}} \subseteq \mathcal{U}$, with $u^\wedge:=\inf_{n\in \N}u_n$ and $u^\vee:=\sup_{n\in \N}u_n$,
	\begin{align*}
		J\left( \big(u^\wedge,\mu^{u^\wedge}\big), \mu \right) &= \inf_{n\in \mathbb{N}} J\left( (u_{n}, \mu^{u_n}), \mu \right)\quad\text{and} \\
		J\left( \big(u^\vee,\mu^{u^\vee}\big), \mu \right) &= \sup_{n\in \mathbb{N}} J\left(( u_{n}, \mu^{u_n}), \mu \right),
	\intertext{or else}
		J\left( \big(u^\wedge,\mu^{u^\wedge}\big), \mu \right) &= \sup_{n\in \mathbb{N}} J\left( (u_{n}, \mu^{u_n}), \mu \right)\quad\text{and} \\
		J\left( \big(u^\vee,\mu^{u^\vee}\big), \mu \right) &= \inf_{n\in \mathbb{N}} J\left(( u_{n}, \mu^{u_n}), \mu \right).
	\end{align*}
\end{enumerate}
\end{assumption}

\begin{proposition} \label{FiniteMarkovProp}
	Given Assumption~\ref{FiniteMarkovSufficient}, the set $E$ together with the pointwise lattice operations 
	\begin{align*}
	(u,\mu^u) \wedge^{E} (v,\mu^v):= (u \wedge^{\mathcal{U}} v, \mu^u \wedge^{L} \mu^v)\quad\text{and}\quad(u,\mu^u) \vee^{E} (v,\mu^v):= (u \vee^{\mathcal{U}}v, \mu^u \vee^{L} \mu^v),
	\end{align*}
 for $u,v\in \mathcal{U}$, becomes a lattice, and the best response map $R$, the projection $p$, and the cost functional $J$ satisfy Assumption \ref{assumption} and the alternative for Assumption~\ref{assumption.bestresponsemap.nonempty} from Remark \ref{remark assumptions non topological}.
\end{proposition}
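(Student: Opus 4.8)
The plan is to verify, in this order: that the pointwise operations $\wedge^{E},\vee^{E}$ turn $E$ into a lattice and that $p$ is a lattice homomorphism (Condition~\ref{assumption.existence.operation} of Assumption~\ref{assumption}); the submodularity estimate (Condition~\ref{assumption.submod.J}); and finally the order-theoretic substitute for Assumption~\ref{assumption.bestresponsemap.nonempty} proposed in Remark~\ref{remark assumptions non topological}. Throughout I will identify $E$ with $\mathcal{U}$ via $u\mapsto(u,\mu^{u})$.

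First I would record the elementary consequence of the compatibility Condition~\ref{FiniteMarkovSufficient.compatibility}, applied to the (eventually constant) two-term sequences $(u,v,v,\dots)$: namely $\mu^{u\wedge^{\mathcal{U}}v}=\mu^{u}\wedge^{L}\mu^{v}$ and $\mu^{u\vee^{\mathcal{U}}v}=\mu^{u}\vee^{L}\mu^{v}$. This shows that the second coordinate of $(u\wedge^{\mathcal{U}}v,\mu^{u}\wedge^{L}\mu^{v})$ is precisely $\mu^{u\wedge^{\mathcal{U}}v}$, so $\wedge^{E}$ and $\vee^{E}$ do map $E$ into $E$, and that $u\mapsto(u,\mu^{u})$ is a bijection intertwining the operations on $\mathcal{U}$ and on $E$; since $(\mathcal{U},\leq^{\mathcal{U}})$ is a lattice, the lattice axioms transfer and $E$ becomes a lattice. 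By the very definition of $p$ as the projection on the second coordinate, $p(\nu\wedge^{E}\bar\nu)=\mu^{u}\wedge^{L}\mu^{v}=p\nu\wedge^{L}p\bar\nu$ and likewise for the join, so Condition~\ref{assumption.existence.operation} holds, in fact with equalities at both ends of the chain.

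Next, for Condition~\ref{assumption.submod.J}, fix $\mu\leq^{L}\bar\mu$ and write $\nu=(u,\mu^{u})$, $\bar\nu=(v,\mu^{v})$. For the right-hand inequality I would read Condition~\ref{FiniteMarkovSufficient.costinfsup}, applied to the two-term sequence $(u,v)$, as a \emph{modularity} statement: in either of its two alternatives the values $J(\nu\wedge^{E}\bar\nu,\mu)$ and $J(\nu\vee^{E}\bar\nu,\mu)$ form, as an unordered pair, exactly $\{J(\nu,\mu),J(\bar\nu,\mu)\}$, whence $J(\nu\vee^{E}\bar\nu,\mu)+J(\nu\wedge^{E}\bar\nu,\mu)=J(\nu,\mu)+J(\bar\nu,\mu)$; rearranging yields the right inequality as an equality. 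For the left-hand inequality I would invoke the submodularity Condition~\ref{FiniteMarkovSufficient.submod} with the comparable pair $\hat u:=v\leq^{\mathcal{U}}u\vee^{\mathcal{U}}v=:\check u$ and $\hat\mu:=\mu\leq^{L}\bar\mu=:\check\mu$; since $\mu^{\check u}$ is the second coordinate of $\nu\vee^{E}\bar\nu$ and $\mu^{\hat u}=\mu^{v}$, this is verbatim $J(\nu\vee^{E}\bar\nu,\bar\mu)-J(\bar\nu,\bar\mu)\leq J(\nu\vee^{E}\bar\nu,\mu)-J(\bar\nu,\mu)$.

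Finally, for the replacement of Assumption~\ref{assumption.bestresponsemap.nonempty}, set $\phi(u):=J((u,\mu^{u}),\mu)$ for fixed $\mu$. To obtain nonemptiness of $\argmin_{E}J(\cdot,\mu)$ I would take a minimizing sequence $(u_{n})$ and pass to its monotone envelope of partial meets (or joins, according to whichever alternative of Condition~\ref{FiniteMarkovSufficient.costinfsup} is in force): by that condition $\phi$ along this monotone sequence equals the running minimum $\min_{k\leq n}\phi(u_{k})$, which tends to $\inf_{\mathcal{U}}\phi$, and a further application of Condition~\ref{FiniteMarkovSufficient.costinfsup} at its infimum/supremum shows the limit point attains $\inf_{\mathcal{U}}\phi$ (in particular this forces $\inf_{\mathcal{U}}\phi$ to be finite). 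For closedness under monotone sequences, given a nondecreasing $(\mu^{n})_{n}\subset R(\mu)$ with $\mu^{n}=\mu^{u_{n}}$ and each $u_{n}$ a minimizer of $\phi$, I would replace $(u_{n})$ by the nondecreasing sequence $v_{n}:=u_{1}\vee^{\mathcal{U}}\cdots\vee^{\mathcal{U}}u_{n}$; by Condition~\ref{FiniteMarkovSufficient.compatibility} and monotonicity of $(\mu^{u_{n}})$ one has $\mu^{v_{n}}=\mu^{u_{1}}\vee^{L}\cdots\vee^{L}\mu^{u_{n}}=\mu^{u_{n}}=\mu^{n}$, while the modularity of $\phi$ (again Condition~\ref{FiniteMarkovSufficient.costinfsup}) together with $\phi\geq\inf_{\mathcal{U}}\phi$ shows inductively that joins of minimizers are minimizers, so each $v_{n}$ minimizes $\phi$. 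Taking $v^{*}:=\sup_{n}v_{n}$ and using Condition~\ref{FiniteMarkovSufficient.costinfsup} once more gives $\phi(v^{*})=\inf_{\mathcal{U}}\phi$ and $\mu^{v^{*}}=\sup_{n}\mu^{v_{n}}=\sup_{n}\mu^{n}$, so $\nu:=(v^{*},\mu^{v^{*}})\in\argmin_{E}J(\cdot,\mu)$ projects onto $\sup_{n}\mu^{n}$; the nonincreasing case is symmetric, with partial meets. I expect the main obstacle to lie precisely in this last step: reconciling the monotonicity imposed on the \emph{projected flows} $\mu^{n}$ with the need for a monotone, minimizing sequence of \emph{controls}, whose resolution rests on the twin roles of Condition~\ref{FiniteMarkovSufficient.costinfsup} (order-continuity of $\phi$ and, read as modularity, stability of minimizers under $\wedge^{\mathcal{U}},\vee^{\mathcal{U}}$) and of the compatibility Condition~\ref{FiniteMarkovSufficient.compatibility}. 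A secondary point requiring care is that the dichotomy in Condition~\ref{FiniteMarkovSufficient.costinfsup} must be applied consistently across all sequences for a fixed $\mu$.
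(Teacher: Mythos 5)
Your proposal is correct and follows essentially the same route as the paper's proof: equalities in Condition \ref{assumption.existence.operation} via Condition \ref{FiniteMarkovSufficient.compatibility}, the left inequality of Condition \ref{assumption.submod.J} from Condition \ref{FiniteMarkovSufficient.submod} with $\hat u=v$, $\check u=u\vee^{\mathcal U}v$, the right one as an equality from the modular identity implied by Condition \ref{FiniteMarkovSufficient.costinfsup}, and the order-theoretic substitute for Assumption \ref{assumption.bestresponsemap.nonempty} from Conditions \ref{FiniteMarkovSufficient.compatibility} and \ref{FiniteMarkovSufficient.costinfsup}. Your detour through partial meets/joins in the last step is a harmless variant: the paper applies Condition \ref{FiniteMarkovSufficient.costinfsup} directly to $\inf_n u_n$ and $\sup_n u_n$ of the (not necessarily monotone) sequence of minimizers, which makes the monotonicity concern you flag unnecessary.
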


\begin{proof}
First observe that, thanks to Condition \ref{FiniteMarkovSufficient.compatibility} in Assumption \ref{FiniteMarkovSufficient}, the operations $\wedge^{E}$, $\vee^{E}$ are well-defined in the sense that if $\nu, \bar{\nu} \in E$, then $\nu \wedge^{E} \bar{\nu}$ and $\nu \vee^{E} \bar{\nu}$ are again elements of $E$.

Since $\mu \wedge^{L} \bar{\mu} \leq^{L} \mu \vee^{L} \bar{\mu}$ for all $\mu, \bar{\mu} \in L$, we find that the projection $p$ satisfies Condition \ref{assumption.existence.operation} in Assumption \ref{assumption}. Indeed, if $(u,\mu^u), (v,\mu^v) \in E$, then
\begin{align*}
	p\bigl((u,\mu^u) \wedge^{E} (v,\mu^v)\bigr) &= \mu^u \wedge^{L} \mu^v = p\bigl((u,\mu^u)\bigr) \wedge^{L} p\bigl((v,\mu^v)\bigr) \leq p\bigl((u,\mu^u)\bigr) \vee^{L} p\bigl((v,\mu^v)\bigr) \\
	&= \mu^u \vee^{L} \mu^v = p\bigl((u,\mu^u) \vee^{E} (v,\mu^v)\bigr).
\end{align*}
Again, thanks to Condition \ref{FiniteMarkovSufficient.compatibility} in Assumption \ref{FiniteMarkovSufficient}, we have
\begin{equation} \label{FiniteMarkovProofSolMap}
	\mu^u \leq^{L} \mu^v\quad \text{for all } u, v \in \mathcal{U} \text{ with } u \leq^{\mathcal{U}}v,
\end{equation}
for in that situation, setting $u_{1}:= u$ and $u_{n}:=v$, for $n \in \mathbb{N}\setminus \{1\}$, we find $\mu^u = \mu^u \wedge^{L} \mu^v \leq^{L} \mu^v$.

Let $(u,\mu^u), (v, \mu^v) \in E$, and let $\hat\mu, \check\mu \in L$ be such that $\hat\mu \leq^{L} \check\mu$. Set $\check{u} :=u \vee^{\mathcal{U}} v$.
Then, thanks to Conditions \ref{FiniteMarkovSufficient.compatibility} and \ref{FiniteMarkovSufficient.submod} in Assumption \ref{FiniteMarkovSufficient},
\begin{align*}
	J\bigl((u,\mu^u) \vee^{E} (v,\mu^v), \check\mu \bigr) - J\bigl((v,\mu^v), \check\mu \bigr)&= J\left(( \check{u}, \mu^{\check{u}}), \check\mu \right) - J\big( (v, \mu^v), \check\mu \big) \\
	&\leq J\left( (\check{u}, \mu^{\check{u}}), \hat\mu \right) - J\big( (v,\mu^v), \hat\mu \big) \\ 
	&= J\bigl((u,\mu^u) \vee^{E} (v,\mu^v), \hat\mu \bigr) - J\bigl((v,\mu^v), \hat\mu \bigr).
\end{align*}
This establishes the first inequality in Condition \ref{assumption.submod.J} in Assumption \ref{assumption}. 
The second inequality in Condition \ref{assumption.submod.J} in Assumption \ref{assumption} is a consequence of Condition \ref{FiniteMarkovSufficient.costinfsup} in Assumption \ref{FiniteMarkovSufficient}. 
In fact, thanks to Condition \ref{FiniteMarkovSufficient.costinfsup} in Assumption \ref{FiniteMarkovSufficient}, we have for every $\mu \in L$ and all $(u,\mu^u), (v,\mu^v) \in E$,
\begin{align*}
	J\bigl((u,\mu^u) \vee^{E} (v,\mu^v)&, \mu \bigr) + J\bigl((u,\mu^u) \wedge^{E} (v,\mu^v), \mu \bigr) \\
	&= J\left( (u \vee^{\mathcal{U}}v, \mu^u \vee^{L}\mu^v), \mu \right) + J\left(( u \wedge^{\mathcal{U}} v, \mu^u \wedge^{L} \mu^v, \mu \right) \\
	&= \min\left\{J\left( (u, \mu^u), \mu \right), J\left( (v, \mu^v), \mu \right) \right\} + \max\left\{ J\left( (u, \mu^u), \mu \right), J\left( (v, \mu^v), \mu \right) \right\} \\
	&= J\left( (u, \mu^u), \mu \right) + J\left( (v,\mu^v), \mu \right).
\end{align*}
Let $\mu \in L$ and $((u_{n},\mu^{u_n}))_{n\in \mathbb{N}} \subset E$ be such that $J((u_{n},\mu^{u_n}),\mu) \searrow \inf_{\nu\in E} J(\nu,\mu)$ as $n \to \infty$. Notice that this infimum exists in $[-\infty,\infty)$. Set
\begin{align*}
	 \hat{u} :=\inf_{n\in \N} u_{n} \quad \text{and}\quad \check{u} :=\sup_{n\in\N} u_{n} .
\end{align*}
By Condition \ref{FiniteMarkovSufficient.compatibility} in Assumption \ref{FiniteMarkovSufficient}, we have
\begin{align*}
	\mu^{\hat{u}} = \inf_{n\in\N}\mu^{u_n}\quad\text{and}\quad \check{\mu} = \sup_{n\in\N}\mu^{u_n}.
\end{align*}
By Condition \ref{FiniteMarkovSufficient.costinfsup} in Assumption \ref{FiniteMarkovSufficient} (we only treat the first case there, the second is obtained by interchanging infima and suprema), we find that
\[
	J\bigl((\hat{u},\mu^{\hat{u}}), \mu \bigr)= \inf_{n\in \mathbb{N}} J\big((u_{n},\mu^{u_n}),\mu\big)\quad\text{and}\quad J\bigl((\check{u},\mu^{\check{u}}), \mu \bigr) =  \sup_{n\in \mathbb{N}} J\big((u_{n},\mu^{u_n}),\mu\big).
\]
It follows that $\inf_{\nu\in E} J(\nu,\mu) = J((\hat{u},\mu^{\hat{u}}), \mu)$, which shows that $(\hat{u},\mu^{\hat u}) \in \argmin_{E} J(\cdot ,\mu)$ and thus $\mu^{\hat{u}} \in R(\mu)$. In particular, the set of best response distributions is non-empty.


Now, suppose that $(\mu_{n})_{n\in \mathbb{N}} \subseteq R(\mu)$. For $n\in \mathbb{N}$, choose $u_{n} \in \argmin_{u\in \mathcal{U}} J((u,\mu^u),\mu)$ such that $\mu^{u_{n}} = {\mu}_{n}$. Define $\hat{u}$ and $\check{u}$ in the same way as above. By Condition \ref{FiniteMarkovSufficient.compatibility} in Assumption \ref{FiniteMarkovSufficient}, we have
\begin{align*}
 \mu^{\hat{u}} = \inf_{n\in\N} \mu_n\quad\text{and}\quad \mu^{\check{u}} = \sup_{n\in\N}\mu_n,
\end{align*}
and, by Condition \ref{FiniteMarkovSufficient.costinfsup} in Assumption \ref{FiniteMarkovSufficient} (in the first case there), we find again that
\[
J\bigl((\hat{u},\mu^{\hat{u}}), \mu \bigr)= \inf_{n\in \mathbb{N}} J\big((u_{n},\mu^{u_n}),\mu\big)\quad\text{and}\quad J\bigl((\check{u},\mu^{\check{u}}), \mu \bigr)=  \sup_{n\in \mathbb{N}} J\big((u_{n},\mu^{u_n}),\mu\big).
\]
But $u_{n} \in \argmin_{u\in \mathcal{U}} J((u,\mu^{u}),\mu)$ for every $n\in \mathbb{N}$, hence
\[
	\inf_{n\in \mathbb{N}} J\bigl((u_{n},\mu^{u_n}),\mu \bigr) = \sup_{n\in \mathbb{N}} J\bigl((u_{n},\mu^{u_n}),\mu \bigr).
\]
It follows that $\mu^{\hat{u}} \in R(\mu)$ as well as $\mu^{\check{u}} \in R(\mu)$. In particular, any monotone sequence in $R(\mu)$ has a limit in $R(\mu)$. We thus see that the alternative for  Assumption~\ref{assumption.bestresponsemap.nonempty} from Remark~\ref{remark assumptions non topological} is satisfied.
\end{proof}

By Proposition~\ref{FiniteMarkovProp}, Remark~\ref{remark assumptions non topological} and part a) of Theorem \ref{theorem.main.general}, one immediately obtains:

\begin{corollary} \label{FiniteMarkovMain}
	Given Assumption~\ref{FiniteMarkovSufficient}, the set $M$ of solutions to the finite state mean field game is nonempty and contains $\inf M$ as well as $\sup M$.
\end{corollary}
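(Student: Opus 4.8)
The plan is to check that the finite state game fits the abstract framework of Section \ref{section general case} and then to read off the conclusion from part a) of Theorem \ref{theorem.main.general}, using the order-theoretic reformulation provided by Remark \ref{remark assumptions non topological}.

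First I would confirm the standing lattice hypothesis on the space of distributions, which Proposition \ref{FiniteMarkovProp} does not itself address. It was already shown that $(L,\leq^{L})$ is a complete lattice. Since $S$ is finite and $T\in\N$, the supremum (resp.\ infimum) of any bounded family in $L$ is determined through finitely many real infima (resp.\ suprema) of the associated cumulative distribution functions, each of which is realized along a countable subfamily; taking the union of these countable subfamilies shows that $(L,\leq^{L})$ is Dedekind super complete in the sense of Definition \ref{def.dedekind}. Hence the hypothesis that the lattice of distributions be complete and Dedekind super complete, under which Theorem \ref{theorem.main.general} is stated, is satisfied.

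Next I would invoke Proposition \ref{FiniteMarkovProp}: under Assumption \ref{FiniteMarkovSufficient} it turns $E$ into a lattice for the operations $\wedge^{E},\vee^{E}$ and shows that $p$, $J$, and $R$ satisfy Assumption \ref{assumption} together with the purely order-theoretic replacement for Assumption \ref{assumption.bestresponsemap.nonempty} stated in Remark \ref{remark assumptions non topological}. By that remark, the conclusions of Lemma \ref{lemma.bestresponse} and of part a) of Theorem \ref{theorem.main.general} remain valid with Assumption \ref{assumption.bestresponsemap.nonempty} replaced by its order-theoretic counterpart, so no topology on $E$ is needed. Applying part a) of Theorem \ref{theorem.main.general} then yields at once that the equilibrium set $M$ is nonempty with $\inf M\in M$ and $\sup M\in M$, which is exactly the assertion. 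I do not expect any genuine obstacle at this level: the substantive work---the reduction of Condition \ref{assumption.submod.J} to the single-crossing Condition \ref{FiniteMarkovSufficient.submod} and the inf/sup-compatibility Conditions \ref{FiniteMarkovSufficient.compatibility} and \ref{FiniteMarkovSufficient.costinfsup}, and the closedness of $R(\mu)$ under monotone sequences---has already been carried out in Proposition \ref{FiniteMarkovProp}, so that the corollary is immediate once the Dedekind super completeness of $L$ is recorded.
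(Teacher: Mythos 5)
Your proposal is correct and follows exactly the paper's own (one-line) proof: combine Proposition \ref{FiniteMarkovProp}, Remark \ref{remark assumptions non topological}, and part a) of Theorem \ref{theorem.main.general}. Your additional check that $(L,\leq^{L})$ is Dedekind super complete is a valid and worthwhile observation (the paper leaves it implicit), but it does not alter the route.
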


The following example shows a family of simple two-state models where the assumptions of Proposition~\ref{FiniteMarkovProp} are satisfied.

\begin{example}

	Choose $d = 2$, and set $\Gamma := [0,1]$ (with the natural order $\leq$). Choose $p, q \in (0,1]$ with $p \leq q$, and define controlled transition matrices $A(\gamma)$ according to
	\[
		A(\gamma) \doteq \begin{pmatrix} 1 - p\gamma & p\gamma \\
		1 - q\gamma & q\gamma \end{pmatrix}\quad\text{for all } \gamma \in \Gamma.
	\]
	With this choice, for all $\gamma \in \Gamma = [0,1]$ and all $\mu = (\mu_1,\mu_2) \in \mathcal{P}(S) = \mathcal{P}(\{1,2\})$,
	\[
		\mu A(\gamma) = \left( 1 - \gamma \bigl(p + \mu_2(q-p)\bigr), \gamma \bigl(p + \mu_2(q-p)\bigr) \right).
	\]
	For $\mu, \bar{\mu} \in \mathcal{P}(S)$, we have that $\mu \preceq \bar{\mu}$ if and only if $\mu_2 \leq \bar{\mu}_2$, and that
	\[
		\left\{ \mu \wedge \bar{\mu}, \mu \vee \bar{\mu} \right\} = \left\{ \mu, \bar{\mu} \right\}.	
	\]
	Therefore, if $\mu, \bar{\mu} \in L$, then for all $t \in \{0,\ldots,T\}$,
	\[
	\left\{ \left( \mu \wedge^{L} \bar{\mu} \right)_{t},  \left( \mu \vee^{L} \bar{\mu} \right)_{t} \right\} = \left\{ \mu_{t}, \bar{\mu}_{t} \right\}.	
	\]
	It also follows that
	\[
		\mu A(\gamma) \preceq \bar{\mu} A(\tilde{\gamma}) \quad\text{whenever } \gamma \leq \tilde{\gamma} \text{ and } \mu \preceq \bar{\mu}.
	\]
	In case $\mu = \bar{\mu}$, we have, for all $\gamma, \bar{\gamma} \in \Gamma$ and $(\gamma_{n})_{\in \mathbb{N}} \subset \Gamma$,
	\begin{align*}
		& \mu A(\gamma \wedge \bar{\gamma}) = \left(m A(\gamma) \right) \wedge \left(\mu A(\bar{\gamma}) \right), & & \mu A(\gamma \vee \bar{\gamma}) = \left(\mu A(\gamma) \right) \vee \left(\mu A(\bar{\gamma}) \right), & \\
		& \mu A\bigl( \inf_{n\in \mathbb{N}} \gamma_{n} \bigr) = \inf \left\{ \mu A(\gamma) : n\in \mathbb{N} \right\}, & & \mu A\bigl( \sup_{n\in \mathbb{N}} \gamma_{n} \bigr) = \sup \left\{ \mu A(\gamma) : n\in \mathbb{N} \right\}. &
	\end{align*}
	We introduce a partial order $\leq^{\mathcal{U}}$ on $\mathcal{U}$ by
	\begin{align*}
		& u \leq^{\mathcal{U}} \tilde{u} & &\text{if and only if}& & \mu^{u} \leq^{L} \mu^{\tilde{u}}. 
	\end{align*}
	Then, the greatest lower bound $u \wedge^{\mathcal{U}} v$ of two elements $u, v \in \mathcal{U}$ is defined as follows: Set $\hat{\mu} := \mu^{u} \wedge^{L} \mu^{v}$, and define $\left( u \wedge^{\mathcal{U}} v \right)_0:= \min\left\{ u_0, v_0 \right\}$ and, for $t\in \{0,\ldots,T-2\}$, 
	\[
		\left( u \wedge^{\mathcal{U}} v \right)_{t+1} :=  \min \left\{ u_t\cdot \frac{p+(\mu^{u}_{t})_{2}(q-p)}{p+(\hat{\mu}_{t})_{2}(q-p)},\; v_t\cdot \frac{p+(\mu^{v}_{t})_{2}(q-p)}{p+(\hat{\mu}_{t})_{2}(q-p)} \right\}.
	\]
	By induction, one checks that, for every $t\in \{0,\ldots,T-1\}$,
	\begin{align*}
		& \left( u \wedge^{\mathcal{U}} v \right)_t \in [0,1], & \mu^{u \wedge^{\mathcal{U}} v}_{t} = \hat{\mu}_{t}. &
	\end{align*}
	Indeed, the claim holds for $t =0$. Now, suppose that it holds up to time $t$ and that $\mu^{u}_{t+1} = \hat{\mu}_{t+1}$. Then $\hat{\mu}_{t+1} = \mu^{u}_t A(u(t))$ and there exists $\tilde{\gamma} \in \{u(t), \tilde{u}(t)\}$ such that  $\hat{\mu}_{t+1} \preceq \hat{\mu}_{t} A(\tilde{\gamma})$. But then
	\[
		(\hat{\mu}_{t+1})_{2} = u(t) \bigl(p + (\mu^{u}_{t})_{2}(q-p)\bigr) \leq \tilde{\gamma} \bigl(p + (\hat{\mu}_{t})_{2}(q-p)\bigr),
	\]
	hence
	\[
		 0 \leq u(t)\cdot \frac{p+(\mu^{u}_{t})_{2}(q-p)}{p+(\hat{\mu}_{t})_{2}(q-p)} \leq \tilde{\gamma} \leq 1.
	\]
	Moreover,
	\[
		\left( \hat{\mu}_{t} A\left( u(t)\cdot \frac{p+(\mu^{u}_{t})_{2}(q-p)}{p+(\hat{\mu}_{t})_{2}(q-p)} \right) \right)_{2} = u(t)\cdot \frac{p+(\mu^{u}_{t})_{2}(q-p)}{p+(\hat{\mu}_{t})_{2}(q-p)}\cdot \bigl(p + (\hat{\mu}_t)_{2}(q-p) \bigr) = (\hat{\mu}_{t+1})_{2}
	\]
	since $\hat{\mu}_{t+1} = \mu^{u}_t A(u(t))$ by assumption. The case $\hat{\mu}_{t+1} = \mu^{\tilde{u}}_t A(\tilde{u}(t))$ is handled in the same way.
	
	In analogy with the greatest lower bound, one defines the least upper bound $u \vee^{\mathcal{U}} \tilde{u}$. It follows that for all $u, \tilde{u} \in \mathcal{U}$,
	\begin{align*}
		& \mu^{u \wedge^{\mathcal{U}} \tilde{u}} = \mu^{u} \wedge^{L} \mu^{\tilde{u}}, & & \mu^{u \vee^{\mathcal{U}} \tilde{u}} = \mu^{u} \vee^{L} \mu^{\tilde{u}}. &
	\end{align*}
	
	Let $(u_{n})_{n\in \mathbb{N}} \subseteq \mathcal{U}$. Set $\hat{\mu} := \inf_{n\in \mathbb{N}} \mu^{u_{n}}$, and define $\hat{u} \in \mathcal{U}$ by setting $\hat{u}(0) := \inf_{n\in \mathbb{N}} u_{n}(0)$,
	\[
		\hat{u}(t+1) := \inf\left\{ u_{n}(t)\cdot \frac{p+(\mu^{u_{n}}_{t})_{2}(q-p)}{p+(\hat{\mu}_{t})_{2}(q-p)} : n\in \mathbb{N} \right\}, \quad t\in \{0,\ldots,T-2\}.
	\]
	By induction, one checks that $\mu^{\hat{u}} = \hat{\mu}$, hence the part of Condition \ref{FiniteMarkovSufficient.compatibility} in Assumption \ref{FiniteMarkovSufficient} regarding the greatest lower bound is satisfied. 
	The upper bound part is analogous.
	
	Regarding the costs, choose zero running costs $f\equiv 0$ and terminal costs $g$ given by
	\[
		g(i,m) := \phi(i)\cdot \psi(m_{2}),\quad i \in \{1,2\},
	\]
	where $\phi(2) < \phi(1)$ and $\psi\colon [0,1] \rightarrow \mathbb{R}$ is nondecreasing (but not necessarily continuous).
	Then, for $u\in \mathcal{U}$, $\mu \in L$,
	\[
		J\left((u,\mu^{u}), \mu \right) = \left( \bigl(\phi(2)-\phi(1) \bigr) (\mu^{u}_{T})_{2} + \phi(1) \right)\cdot \psi\left((\mu_{T})_{2}\right).
	\]
	Here, if $(\mu^{(n)})_{n\in \mathbb{N}} \subset L$ and $\hat{\mu} = \inf_{n\in \mathbb{N}} \mu^{(n)}$, $\check{\mu} = \sup_{n\in \mathbb{N}} \mu^{(n)}$, then
	\begin{align*}
		& (\hat{\mu}_{T})_{2} = \inf \left\{ (\mu^{(n)}_{T})_{2} : n\in \mathbb{N} \right\}, & & (\check{\mu}_{T})_{2} = \sup \left\{ (\mu^{(n)}_{T})_{2} : n\in \mathbb{N} \right\}. &
	\end{align*}
	The form of $J$ and Condition \ref{FiniteMarkovSufficient.compatibility} in Assumption \ref{FiniteMarkovSufficient} therefore imply that Condition \ref{FiniteMarkovSufficient.costinfsup} in Assumption \ref{FiniteMarkovSufficient} holds. 
	
	In order to check the submodularity condition (i.e., Condition \ref{FiniteMarkovSufficient.submod} in Assumption \ref{FiniteMarkovSufficient}), let $\hat{\mu}, \check{\mu}\in L$ and $\hat{u}, \check{u} \in \mathcal{U}$ be such that $\hat{\mu} \leq^{L} \check{\mu}$, $\hat{u} \leq^{\mathcal{U}} \check{u}$. Then
	\begin{align*}
		J\left( (\check{u}, \mu^{\check{u}}), \check\mu \right) - J\left( (\hat{u}, \mu^{\hat{u}}), \check\mu \right) &= \left( \bigl(\phi(2)-\phi(1) \bigr) \left( (\mu^{\check{u}}_{T})_{2} -  (\mu^{\hat{u}}_{T})_{2} \right) \right)\cdot \psi\left((\check{\mu}_{T})_{2} \right), \\
		J\left( (\check{u}, \mu^{\check u}), \hat{\mu} \right) - J\left( (\hat{u}, \mu^{\hat{u}}), \hat{\mu} \right) &= \left( \bigl(\phi(2)-\phi(1) \bigr) \left( (\mu^{\check{u}}_{T})_{2} -  (\mu^{\hat{u}}_{T})_{2} \right) \right)\cdot \psi\left((\hat{\mu}_{T})_{2} \right).
	\end{align*}
	But $\phi(2)-\phi(1) < 0$, while $(\mu^{\check{u}}_{T})_{2} - (\mu^{\hat{u}}_{T})_{2} \geq 0$ by Condition \ref{FiniteMarkovSufficient.compatibility} in Assumption \ref{FiniteMarkovSufficient} since $\hat{u} \leq^{\mathcal{U}} \check{u}$, and $\psi((\hat{\mu}_{T})_{2}) \leq \psi((\check{\mu}_{T})_{2})$ since $\hat{\mu} \leq^{L} \check{\mu}$ and $\psi$ is nondecreasing. It follows that
	\[
		J\left( (\check{u}, \mu^{\check{u}}), \check\mu \right) - J\left( (\hat{u}, \mu^{\hat{u}}), \check\mu \right) \leq J\left( (\check{u}, \mu^{\check u}), \hat{\mu} \right) - J\left( (\hat{u}, \mu^{\hat{u}}), \hat{\mu} \right),
	\]
	which is Condition \ref{FiniteMarkovSufficient.submod} in Assumption \ref{FiniteMarkovSufficient}.
	
\end{example}

\section{Submodular mean field games with singular controls}
\label{section singular controls}

In this section, we specialize to mean field games with singular controls, and show that they can be embedded into the general set-up given in Section \ref{section general case}. 
In the following, we consider MFGs with common noise in which the representative player faces a convex optimization problem (see Subsection \ref{section geometric BM} below) and MFGs without common noise, in which the representative player faces a nonconvex optimization problem (see Subsection \ref{section nonconvex singular} below). 
In these two models, the operations, which are postulated in the Assumption \ref{assumption}, can be constructed with different techniques. 
These operations can be explicitly constructed in the case in which the dynamics are given by controlled geometric Brownian motions and the costs are convex in the state variable.   
When the dynamics are nonlinear, the construction of such operations is provided by approximating singular controls via regular controls, and exploiting the results in \cite{dianetti.ferrari.fischer.nendel.2019}.    

Throughout this section, we take measurable functions
\begin{align*}
 f &\colon [0,T] \times \mathbb{R} \times \mathcal{P}(\mathbb{R}) \to  \R, \\
 g &\colon  \mathbb{R} \times \mathcal{P}(\mathbb{R}) \to  \R, \\
 c &\colon [0,T] \to [0,\infty), 
\end{align*}
satisfying the following conditions.
\begin{assumption} 
\label{ass.submodularity.continuos.time setting} \
\begin{enumerate}
\item For  $ dt$-a.a.\ $t \in  [0,T]$, the functions $f(t,\cdot, \mu)$ and $g(\cdot,\mu)$ are lower semicontinuous and, for some $p>1$ and all $ (t,x,\mu) \in [0,T] \times \mathbb{R} \times \mathcal{P}(\mathbb{R})$,
    $$
    \kappa (|x|^p-1) \leq f(t, x, \mu) \leq K(1+|x|^p), \quad \kappa( |x|^p -1) \leq g(x,\mu) \leq K(1+|x|^p),
    $$
    with constants $K,\kappa >0$;
    \item  For  $ dt$-a.a.\ $t \in  [0,T]$, the functions $f(t,\cdot, \cdot)$ and $g$  have decreasing differences in $(x,\mu)$; that is, for $\phi \in \{f(t,\cdot, \cdot), g \}$,
\begin{equation*}  
 \phi(\bar{x},\bar{\mu})- \phi(x,\bar{\mu}) \leq \phi(\bar{x},\mu)-  \phi(x,\mu ), 
\end{equation*}
for all $ \bar{x},x \in \mathbb{R}$ and $ \bar{\mu},\mu \in \mathcal{P}(\mathbb{R})$ with $\bar{x} \geq x$ and $\bar{\mu} \geq^{\text{st}} \mu$.
\item The cost $c$ is nonincreasing and continuously differentiable with $c > 0$.
\end{enumerate}
\end{assumption}
\subsection{Controlled geometric Brownian motion and common noise}
\label{section geometric BM} 
\subsubsection{Formulation of the model}
Let Assumption \ref{ass.submodularity.continuos.time setting} be satisfied with $p=2$.
Let $W=(W_t)_{t\in [0,T]}$ and $B=(B_t)_{t\in [0,T]}$  be two independent  Brownian motions on a complete filtered probability space $(\Omega,\mathcal F,\mathbb F,\mathbb P)$. 
Define the set of \emph{admissible monotone controls} as the set $\mathcal{V}_{\uparrow}$ of all $\mathbb{F}$-adapted c\`adl\`ag, nondecreasing, square-integrable, and nonnegative processes $\xi=(\xi_t)_{t \in [0,T]}$ such that
\begin{equation}\label{gbm measure pi}
 \mathbb E\bigg[  \int_0^T \xi_t^2  d \pi (t) \bigg] < \infty, \text{ where $\pi:= dt + \delta _T$.}    
\end{equation}
Let $b \in \mathbb{R}$, $\sigma, \, \sigma^o \geq 0$, and $\mathbb F ^o:=(\mathcal{F}_t^o )_{t\in[0,T]}$ denote the filtration generated by $\sigma^o B$ (which is trivial in the case of no common noise, i.e., for $\sigma^o=0$). 
Let $x_0$ be a square integrable $\mathcal{F}_0$-random variable.
For each $\xi \in \mathcal{V}_{\uparrow}$, let $X^{\xi}=(X_t^{\xi})_{t\in[0,T]}$ denote the unique strong solution to the linearly controlled geometric dynamics, given by
\begin{equation}\label{SDE.singular.geometric} 
dX_t^{\xi} =  X_t^\xi (b\, dt + \sigma dW_t + \sigma^o dB_t ) + d\xi_t, \quad t\in [0,T],  \quad X_{0-}^\xi = x_0.  
\end{equation}

For any $\mathcal{P}(\mathbb R)$-valued ${\mathbb{F}}^o$-progressively measurable process $\mu=\left( \mu_t \right)_{t \in [0,T]}$, we
introduce the cost functional
\begin{equation*}
 J(\xi, \mu):= \mathbb{E} \bigg[ \int_0^T f(t, X_t^{\xi}, \mu_t) dt  + g( X_T^{\xi},\mu_T ) + \int_{[0,T]} c_t d\xi_t  \bigg] , \quad \xi \in \mathcal{V}_{\uparrow},
\end{equation*} 
and consider the  singular control problem $\inf_{\xi \in \mathcal{V}_{\uparrow}} J( \xi, \mu )$.
We say that $(X^{\mu},\xi^{\mu})$ is an \emph{optimal pair} for the flow $\mu$ if $ J( \xi^\mu, \mu) \leq J(\xi, \mu) $ for each admissible $\xi$ and $X^\mu=X^{\xi^\mu}$. 
\begin{definition}
 A $\mathcal{P}(\mathbb R)$-valued ${\mathbb{F}}^o$-progressively measurable process $\mu=\left( \mu_t \right)_{t \in [0,T]}$ is an equilibrium of the MFG with singular controls and common noise if
\begin{enumerate}
    \item there exists an optimal pair $(X^{\mu},\xi^{\mu})$ for $\mu$,
    \item $\mu_t=\mathbb P [ X_t^\mu \in \cdot \, | \mathcal F_T^o ]$ $\mathbb P$-a.s., for any $t \in[0,T]$.
\end{enumerate}
\end{definition}

\subsubsection{Optimal controls and a priori estimates}
Recalling that $c >0$, we enforce the following requirements.
\begin{assumption}
\label{assumption singular linear convex case} \
 For  $ dt$-a.a.\ $t \in  [0,T]$, the functions $f(t,\cdot, \mu)$ and $g(\cdot,\mu)$ are strictly convex.
\end{assumption}

Under Assumption  \ref{assumption singular linear convex case}, by employing arguments as those in the proof of Theorem 8 in \cite{Menaldi&Taksar89}, it can be shown that for any process $\mu$, there exists a unique optimal pair $(X^{\mu},\xi^{\mu})$. 
Moreover, since the control, which constantly equals to 0, is suboptimal, the growth conditions in Assumption \ref{assumption singular linear convex case} imply that
\begin{align*}
\kappa \mathbb E\bigg[ \int_0^T |X_t^\mu|^2 dt + |X_T^\mu|^2 \bigg] -\kappa(1+T) & \leq J(\xi^\mu,\mu) \\ 
&\leq J(0,\mu)   \leq K \mathbb  E\bigg[ \int_0^T |X_t^0|^2 dt + |X_T^0|^2 \bigg] + K (1+T), 
\end{align*}
so that, for some constant $\bar{C}>0$ independent of $\mu$, we have
\begin{equation*}
\mathbb E\bigg[ \int_0^T |X_t^\mu|^2 dt + |X_T^\mu|^2 \bigg] \leq \bar{C}.   
\end{equation*}
Therefore, for a suitable generic constant $C>0$ (changing from line to line), we obtain
\begin{align*}
\mathbb{E} [|\xi_T^\mu |^2]
& \leq \mathbb{E} \bigg[ \bigg( X_T^\mu - x_0 - \int_0^T X_t^\mu (b\, dt + \sigma dW_t + \sigma^o dB_t) \bigg)^2 \bigg]  \\ 
&\leq C \mathbb{E} \bigg[ |X_T^\mu|^2  +|x_0|^2 + \int_0^T|X_t^\mu|^2 dt \bigg] \leq C,    
\end{align*}
and, by a standard use of Gr\"onwall's inequality, we conclude that
\begin{equation}\label{tightness.singular.geometric}
     \mathbb{E}[|X_t^\mu |^2 +|\xi_t^\mu|^2] \leq M, \quad \text{for each } t\in [0,T],
 \end{equation}
for a constant $M>0$, which does not depend on $\mu$.

\subsubsection{The control set $E$ and its operations} Define 
\begin{align*}
E:=\big\{ (X^\xi,\xi) \,|\, &\xi \in \mathcal{V}_\uparrow, \text{ $X^\xi$ solution to \eqref{SDE.singular.geometric}} \big\} \quad \text{and}\\ p(X^\xi,\xi)_t (A)&:= \mathbb{P} [ X_t^\xi\in A \, | \mathcal{F}_T^o ], \quad A \in \mathcal{B}(\R).  
\end{align*} 
Due to \eqref{gbm measure pi}, the set $E$ is a subset of the space $\mathbb L^2 _{\pi}$ of $\R^2$-valued progressively measurable processes $\nu$ such that $ \| \nu \|_{\pi,2} := \mathbb E\big[ \int_0^T |\nu_t|^2 d \pi (t) \big] < \infty$, endowed with the norm $\| \cdot \|_{\pi,2}$. 
Moreover, the lower semicontinuity properties of $J$ in Assumptions \ref{assumption.bestresponsemap.nonempty} and \ref{assumption.general.approximation} are satisfied, while the continuity of $J$ w.r.t.\ $\mu$ holds by assuming $f$ and $g$ to be continuous in $\mu$.

Observe that, for each $\xi \in \mathcal{V}_\uparrow$, the solution to the SDE \eqref{SDE.singular.geometric} is, $\P$-a.s., given by
 \begin{equation}
     \label{eq SDE explicit geometric}
     X_t^\xi = \mathcal{E}_t \bigg[ x_0 + \int_{[0,t]} \mathcal{E}_s^{-1} d\xi_s \bigg]\quad  \text{with}\quad 
     \mathcal{E}_t:= \exp \Big[ \Big(b-\tfrac{(\sigma^2 +(\sigma ^o)^2)}{2}\Big)t +  \sigma  W _t + \sigma^o B_t\Big]
 \end{equation} 
 for each $t\in [0,T]$.
 Hence, defining the map $\Phi\colon\mathcal{V}_\uparrow \to \mathcal{V}_\uparrow$ by $\Phi_t(\xi):= \int_{[0,t]} \mathcal{E}_s^{-1} \,d\xi_s$, we have, $\P$-a.s.,
 $$
 X_t^\xi = \mathcal{E}_t[x_0 + \Phi_t(\xi)], \quad  \text{for each } t\in [0,T].
 $$
Moreover, for $\bar{\xi}, \xi \in \mathcal{V}_\uparrow$ and $\bar{\zeta}:=\Phi(\bar{\xi})$ and ${\zeta}:=\Phi({\xi})$, we define, $\P$-a.s., the  controls
\begin{equation}
\label{eq def xi land lor}
\xi_t^\land:=\int_{[0,t]} \mathcal{E}_s d(\bar{\zeta} \land \zeta)_s \quad \text{and} \quad
\xi_t^\lor:=\int_{[0,t]} \mathcal{E}_s d(\bar{\zeta} \lor \zeta)_s, \quad \text{for each } t\in [0,T],
\end{equation}
and obtain
\begin{align}
\label{eq sup dinamics dinamics equals controlled by the sup}
 & X_t^{\bar{\xi}} \land X_t^\xi = \mathcal{E}_t[x_0 + \bar{\zeta}_t \land \zeta_t ]=  \mathcal{E}_t \bigg[ x_0 + \int_{[0,t]} \mathcal{E}_s^{-1} d\xi_s^\land \bigg] =X_t^{\xi^\land}\quad\text{and}  \\ \notag
 & X_t^{\bar{\xi}} \lor X_t^\xi = \mathcal{E}_t[x_0 + \bar{\zeta}_t \lor \zeta_t ]=  \mathcal{E}_t \bigg[ x_0 + \int_{[0,t]} \mathcal{E}_s^{-1} d\xi_s^\lor \bigg] =X_t^{\xi^\lor}. 
\end{align}

According to \eqref{eq def xi land lor}, we introduce the operations $\land^{\text{\tiny{$E$}}},\lor^{\text{\tiny{$E$}}} \colon E \times E \to E$ via 
\begin{equation}\label{equation singular definition operation linear}
(X^{\bar{\xi}},\bar{\xi}) \land^{\text{\tiny{$E$}}} (X^\xi,\xi) := (X^{\xi^\land}, \xi^\land) \quad \text{ and }  \quad (X^{\bar{\xi}},\bar{\xi}) \lor^{\text{\tiny{$E$}}} (X^\xi,\xi) := (X^{\xi^\lor}, \xi^\lor).
\end{equation}
Note that, in light of \eqref{eq sup dinamics dinamics equals controlled by the sup}, the operations $\land^{\text{\tiny{$E$}}},\lor^{\text{\tiny{$E$}}}$ satisfy Condition \ref{assumption.existence.operation} in Assumption \ref{assumption}. 

\subsubsection{The submodularity condition} 
Using the definition of $\xi^\lor$, the linearity of the integral, and that $ \bar{\zeta} \lor \zeta - \bar{\zeta} = {\zeta}- \bar{\zeta}\land \zeta$, we obtain that, for each $t\in[0,T]$,
\begin{equation}\label{gbm eq submodularity of control}
\xi_{t}^\lor - \bar{\xi}_{t}  = \int_{[0,t]} \mathcal{E}_s (d(\bar{\zeta}\lor \zeta)_s-d\bar{\zeta}_s) = \int_{[0,t]} \mathcal{E}_s (d{\zeta}_s-d(\bar{\zeta}\land \zeta)_s) = \xi_{t} - {\xi}_{t}^\land\quad \P\text{-a.s.}
\end{equation}

Recalling the definition of the measure $\pi$ in \eqref{gbm measure pi}, for  $\mu,  \bar{\mu} \in L$, we define the order relation 
\begin{equation}\label{gbm def order relation}
\mu \leq^{\text{\tiny{$L$}}} \bar \mu \text{ if and only if $\mu_t \leq \nu_t, \ \mathbb P$-a.s., for $\pi$-a.a.\ $t\in [0,T]$}.
\end{equation} 
Now, let $\mu, \bar \mu$ be two $\mathcal{P}(\mathbb R)$-valued, $\mathbb F^o$-progressively measurable processes with $\mu \leq^{\text{\tiny{$L$}}} \bar \mu$ and $\xi, \bar \xi \in \mathcal{V}_\uparrow$. 
Using \eqref{eq sup dinamics dinamics equals controlled by the sup} and \eqref{gbm eq submodularity of control}, we find
\begin{align}\label{gbm eq submodularity J} 
 J(\xi^\lor, \bar \mu)- J(\bar{\xi}, \bar \mu) 
& = \mathbb{E} \bigg[ \int_0^T (f(t, X_t^{\bar \xi} \lor X_t^{ \xi} , \bar{\mu}_t) - f(t, X_t^{\bar \xi}, \bar{\mu}_t)) dt \bigg] \\ \notag  
& \quad + \E \bigg[ g( X^{\bar \xi}_T \lor X^{ \xi}_T,\bar{\mu}_T ) - g( X^{\bar \xi}_T,\bar{\mu}_T ) 
+ \int_{[0,T]} c_t d(\xi^\lor - \bar{\xi})_t  \bigg] \\ \notag
& = \mathbb{E} \bigg[ \int_0^T (f(t,  X_t^{ \xi} , \bar{\mu}_t) - f(t, X_t^{\bar \xi} \land X_t^{ \xi}, \bar{\mu}_t)) dt \bigg] \\ \notag
& \quad + \E \bigg[ g( X^{ \xi}_T ,\bar{\mu}_T ) - g( X_T^{\bar \xi} \land X_T^{ \xi},\bar{\mu}_T ) 
+ \int_{[0,T]} c_t d(\xi - {\xi}^\land)_t  \bigg] \\ \notag
 & = J(\xi, \bar \mu)- J(\xi^\land, \bar \mu). \notag
\end{align}
Moreover, by using \eqref{eq sup dinamics dinamics equals controlled by the sup} and Assumption \ref{ass.submodularity.continuos.time setting}, we obtain that
\begin{align}\label{gbm eq decreasing differences J}
J(\xi, \bar \mu)- J({\xi}^\land, \bar \mu) & \leq  \mathbb{E} \bigg[ \int_0^T (f(t,  X_t^{ \xi} , \mu_t) - f(t, X_t^{\bar \xi} \land X_t^{ \xi}, \mu_t)) dt \bigg] \\ \notag
& \quad + \E \bigg[ g( X^{ \xi}_T ,\mu_T ) - g( X_T^{\bar \xi} \land X_T^{ \xi},\mu_T ) 
+ \int_{[0,T]} c_t d(\xi - {\xi}^\land)_t  \bigg] \\ \notag 
&=J(\xi,  \mu)- J({\xi}^\land,  \mu). \notag
\end{align}
Note that \eqref{gbm eq submodularity J} and \eqref{gbm eq decreasing differences J} imply that Condition \ref{assumption.submod.J} in Assumption \ref{assumption} is satisfied, so that the operations $\land^{\text{\tiny{$E$}}},\lor^{\text{\tiny{$E$}}}$ verify all the requirements of Assumption \ref{assumption}.  

Moreover, taking $\xi \in \argmin_{\mathcal{V}_\uparrow} J(\cdot, \mu)$ and  $\bar \xi \in \argmin_{\mathcal{V}_\uparrow} J(\cdot, \bar \mu)$, and using \eqref{gbm eq submodularity J} and \eqref{gbm eq decreasing differences J} we find that $\xi^\land \in \argmin_{\mathcal{V}_\uparrow} J(\cdot, \mu)$ and $ \xi^\lor \in \argmin_{\mathcal{V}_\uparrow} J(\cdot, \bar \mu)$.
Therefore, by the uniqueness of optimal controls, we conclude that $\xi^\land = \xi$ and $\xi^\lor = \bar{\xi}$, so that
\begin{equation}\label{eq mfgsing pre best reply map increasing}
X_t^\mu \leq X_t^{\bar{\mu}}, \ \pi \text{-a.e., whenever $\mu \leq^{\text{\tiny{$L$}}} \bar \mu$.} 
\end{equation}
 
\subsubsection{The lattice $L$} We move on to the identification of a suitable partially ordered set ${(L, \leq^{\text{\tiny{$L$}}})}$.  
Thanks to the a priori estimate \eqref{tightness.singular.geometric} and Chebyshev's inequality for conditional probabilities, we obtain (employing the convention ${x}/{0}= \infty$ for any $x \geq 0$)
\begin{align}\label{eq sing max lemma 1}
\mathbb P [ X_t^\mu \leq x | \mathcal{F}_T^o ] 
&\geq \bigg( 1 - \frac{\mathbb E [|X_t^\mu|^2| \mathcal{F}_T^o]}{(x \lor 0)^2} \bigg) \lor 0 
\geq \bigg( 1 - \frac{\esssup_\mu  \mathbb E [|X_t^\mu|^2| \mathcal{F}_T^o]}{(x \lor 0)^2} \bigg) \lor 0\\ \notag
&=:\mu_t^{\rm Max} \big((-\infty,x]\big), \notag 
\end{align}
as well as
\begin{equation}\label{eq sing max lemma 2}
  \mathbb P [ X_t^\mu \leq x | \mathcal{F}_T^o ] \leq \frac{\mathbb E [ |X_t^\mu|^2 | \mathcal{F}_T^o] }{(x \land 0)^2} \land 1 \leq  \frac{ \esssup_{\mu} \mathbb E [ |X_t^\mu|^2 | \mathcal{F}_T^o] }{(x \land 0)^2} \land 1=:\mu_t^{\rm Min} \big((-\infty,x]\big)   
\end{equation}
for any $\mathcal P(\R)$-valued $\mathbb F^o$ -progressively\ measurable\ flow $\mu$. From \eqref{eq mfgsing pre best reply map increasing}, we see that the set $\{ X^\mu  \, | \, \mu \text{ is a $\mathcal P(\R)$-valued $\mathbb F^o$ -progr.\ meas.\ flow}\}$ is directed downwards (and upwards). 
Therefore, by the monotone convergence theorem,
$$\esssup_{\mu} \mathbb E [ |X_t^\mu|^2 | \mathcal{F}_T^o] \in \mathbb L^1 (\Omega; \mathbb P),$$
so that $\esssup_{\mu} \mathbb E [ |X_t^\mu|^2 | \mathcal{F}_T^o] < \infty$ $\mathbb P$-a.s.\ We deduce that the $\mathbb F^o$-progressively measurable processes $\mu^{\rm Min}$ and $\mu^{\rm Max}$ are $\mathcal{P}(\mathbb R)$-valued and that, for all $\mathcal P(\R)$-valued $\mathbb F^o$ -progressively\ measurable\ flows $\mu$,
\begin{equation}\label{gbm estimate optimal trajectories}
\mu_t^{\rm Min} \leq^{\text{st}} 
\mathbb{P} [ X_t^\mu \in \cdot \; | \mathcal{F}_T^o ] 
\leq^{\text{st}} \mu_t^{\rm Max} \quad \text{$\mathbb P$-a.s.,\quad for all } t \in [0,T].
\end{equation}
We therefore consider the set $L$ of all $\mathcal{P}(\mathbb R)$-valued, $\mathbb F^o$-progressively measurable\ processes $\mu$ with $$\mu_t^{\rm Min}\leq \mu_t \leq \mu_t^{\rm Max} \quad\P\text{-a.s.,\quad for $\pi$-a.a.\ $t \in [0,T]$}, 
$$ 
endowed with the order relation $\leq^{\text{\tiny{$L$}}}$ defined in \eqref{gbm def order relation}.
Since the ordered set $(L, \leq^{\text{\tiny{$L$}}})$ is a special instance of the lattice $\mathcal L$ considered in Section \ref{section lattices}, which is, in addition, order-bounded, it is a complete and Dedekind super complete lattice. 

\subsubsection{Existence and approximation of equilibria}
For any $\mu \in L$, set $ R(\mu)_t := \mathbb{P} [ X_t^\mu \in \cdot \, | \mathcal{F}_T^o ],$ for each $t\in [0,T]$.   
Thanks to \eqref{gbm estimate optimal trajectories}, the best-reply-map $R\colon L \to L$ is well-defined and the MFG equilibria of the MFG with singular controls correspond to processes $\mu \in L$ with $R(\mu)=\mu$.

We can now state and prove the main result of this subsection.
\begin{theorem}
The set of solutions of the MFG with singular controls and common noise is a nonempty complete lattice. Moreover, if $f$ and $g$ are continuous in $(x,\mu)$, then
\begin{enumerate}
\item\label{gbm learning 1} the learning procedure $\underline{\mu}^n$ defined inductively by $\underline{\mu}^0=\inf L$ and $\underline{\mu}^{n+1}=R(\underline{\mu}^n)$ is nondecreasing in $L$ and it converges to the minimum MFG solution,
\item\label{gbm learning 2} the learning procedure $\overline{\mu}^n$ defined inductively by $\overline{\mu}^0=\sup L$ and $\overline{\mu}^{n+1}=R(\overline{\mu}^n)$ is nonincreasing in $L$ and it converges to the maximal MFG solution.  
\end{enumerate}
\end{theorem}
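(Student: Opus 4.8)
The overall plan is to check that the ingredients assembled in this subsection fit the abstract hypotheses of Theorem~\ref{theorem.main.general} and then to invoke that theorem directly. Assumption~\ref{assumption} has in effect already been established: the operations $\land^{\text{\tiny{$E$}}},\lor^{\text{\tiny{$E$}}}$ of \eqref{equation singular definition operation linear} satisfy its Condition~\ref{assumption.existence.operation} by \eqref{eq sup dinamics dinamics equals controlled by the sup}, and its Condition~\ref{assumption.submod.J} by \eqref{gbm eq submodularity J}--\eqref{gbm eq decreasing differences J}. Likewise, $(L,\leq^{\text{\tiny{$L$}}})$ was identified as an order-bounded instance of the lattice $\mathcal L$ of Section~\ref{section lattices}, hence a complete, Dedekind super complete lattice. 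It therefore remains to verify Assumption~\ref{assumption.bestresponsemap.nonempty}, which yields the first assertion, and, for the learning procedures, Assumption~\ref{assumption.general.approximation}.

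For Assumption~\ref{assumption.bestresponsemap.nonempty}, existence of a minimizer follows from the Menaldi--Taksar-type argument under Assumption~\ref{assumption singular linear convex case}, and lower semicontinuity of $J(\cdot,\mu)$ from the lower semicontinuity of $f,g$ together with $c>0$. The decisive point is that strict convexity forces the optimal control to be \emph{unique}, so that $\argmin_{\mathcal{V}_\uparrow}J(\cdot,\mu)$ is a singleton for every $\mu\in L$. Consequently any sequence contained in a single such $\argmin$ set is constant, and the convergence requirement in Assumption~\ref{assumption.bestresponsemap.nonempty} holds trivially. Moreover, singleton-valuedness of $R$ is exactly the extra hypothesis in part~a) of Theorem~\ref{theorem.main.general} that upgrades the conclusion from ``$M$ nonempty with $\inf M,\sup M\in M$'' to ``$M$ is a nonempty complete lattice''. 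This proves the first sentence of the statement.

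For the learning procedures, assuming in addition that $f,g$ are continuous in $(x,\mu)$, the plan is to verify Assumption~\ref{assumption.general.approximation} and then apply parts~b) and c) of Theorem~\ref{theorem.main.general}. The two continuity-type conditions there are handled by combining the continuity of $f,g$ in $\mu$ with Lemma~\ref{minimaandmaxima} (a monotone flow of sub-probability measures converges weakly to its $\leq_{\rm st}$-supremum or infimum) and the uniform moment bound \eqref{tightness.singular.geometric}, which, via the polynomial growth in Assumption~\ref{ass.submodularity.continuos.time setting}, legitimises the passage to the limit by dominated convergence; the joint lower-semicontinuity condition follows analogously, pairing lower semicontinuity in the state with continuity in the measure.

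I expect the main obstacle to be the remaining compactness-and-identification requirement in Assumption~\ref{assumption.general.approximation}: given optimizers $\nu^n=(X^{\xi^n},\xi^n)$ whose projections $p(\nu^n)$ form a monotone flow, one must produce a limit $\nu=(X^\xi,\xi)\in E$ with $p\nu=\sup_n p(\nu^n)$ (respectively $\inf_n p(\nu^n)$). The uniform estimate \eqref{tightness.singular.geometric} bounds $(\xi^n)$ in $\mathbb{L}^2_\pi$, yielding a weakly convergent subsequence; one then checks that the weak limit is still nonnegative, nondecreasing and adapted, hence lies in $\mathcal{V}_\uparrow$, and that the corresponding state process is given by \eqref{SDE.singular.geometric} through the explicit representation \eqref{eq SDE explicit geometric}. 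Finally, one identifies the limit of the conditional laws: by Lemma~\ref{minimaandmaxima} the monotone flow $p(\nu^n)$ converges weakly, $\pi$-a.e., to its order supremum (infimum), and this weak limit must coincide with the conditional law of the limiting trajectory $X^\xi$. Once these convergences are secured, Theorem~\ref{theorem.main.general}(b)--(c) delivers the monotonicity and the convergence of $(\underline{\mu}^n)$ and $(\overline{\mu}^n)$ to $\inf M$ and $\sup M$.
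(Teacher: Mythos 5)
Your treatment of the first assertion is correct and matches the paper: strict convexity gives a unique optimal pair, so $R(\mu)$ is a singleton and part~a) of Theorem~\ref{theorem.main.general} yields that $M$ is a nonempty complete lattice; the subsequence condition in Assumption~\ref{assumption.bestresponsemap.nonempty} is indeed trivial when each $\argmin$ set is a singleton.

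The gap is in the learning procedures. Your plan is to \emph{verify} Assumption~\ref{assumption.general.approximation} and then quote parts~b) and~c), but the paper explicitly notes that the sequential compactness required there is \emph{not} satisfied in this model; one has to re-run the proof of Theorem~\ref{theorem.main.general} by hand. More importantly, the substitute you propose --- extracting a weakly convergent subsequence of $(\xi^n)$ in $\mathbb L^2_\pi$ from the bound \eqref{tightness.singular.geometric} --- does not close the argument: weak $\mathbb L^2_\pi$ convergence of the controls (or of the states) gives no control on the conditional laws $\mathbb P[X^n_t\in\cdot\,|\,\mathcal F^o_T]$, so your final step ``this weak limit must coincide with the conditional law of the limiting trajectory'' is asserted rather than proved. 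The missing ingredient is the pathwise comparison \eqref{eq mfgsing pre best reply map increasing}: because optimizers are unique, submodularity forces $X^\mu_t\leq X^{\bar\mu}_t$ $\pi$-a.e.\ whenever $\mu\leq^{\text{\tiny{$L$}}}\bar\mu$, so along the learning procedure the \emph{trajectories} themselves are monotone, $X^n_t\leq X^{n+1}_t$ $\mathbb P\otimes\pi$-a.e. One then sets $X_t:=\sup_n X^n_t$ pointwise; monotone convergence together with \eqref{tightness.singular.geometric} upgrades this to strong $\mathbb L^2_\pi$ convergence, the limiting control is recovered from the SDE as $\xi_t=X_t-x_0-\int_0^t X_s(b\,ds+\sigma\,dW_s+\sigma^o\,dB_s)$ (nondecreasing as a strong limit of nondecreasing processes, admitting a c\`adl\`ag modification so that $(X,\xi)\in E$), and the a.s.\ monotone convergence of the trajectories is exactly what identifies $\mathbb P[X_t\in\cdot\,|\,\mathcal F^o_T]$ as $\sup_n p(\nu^n)$. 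Without this trajectory-level monotonicity your weak-compactness route leaves the identification step unproved.
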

\begin{proof}
The fact that the set of MFG solutions is a nonempty complete lattice is a direct consequence of Theorem \ref{theorem.main.general}. 
We therefore just prove the convergence of the learning procedure in Claim \ref{gbm learning 1} (Claim \ref{gbm learning 2} can be proved analogously). 
Even if the sequential compactness in Assumption \ref{assumption.general.approximation} is not satisfied, the arguments in the proof of Theorem \ref{theorem.main.general} can be recovered as follows. 

We first observe that, thanks to \eqref{eq mfgsing pre best reply map increasing} and the definition of $R$, the sequence $\underline{\mu}^n$ is nondecreasing in $L$.
Hence, setting $(X^n, \xi^n) :=( X^{\underline{\mu}^n}, \xi^{\underline{\mu}^n})$, again by \eqref{eq mfgsing pre best reply map increasing} we have that $X_t^n \leq X_t ^{n+1}$,  $\mathbb P \otimes \pi$-a.e., for any $n\in \mathbb N$. 
Therefore, we can define the process $X_t := \sup_n X_t^n$, and, by the monotone convergence theorem and the estimates in \eqref{tightness.singular.geometric}, we conclude that $X^n \to X$ in $\mathbb L^2 _{\pi}$ as $n\to \infty$.
Next, we define the control process $\xi$ by setting 
$$
\xi_t := X_t - x_0 - \int_0^t X_s(b dt + \sigma dW_s + \sigma^o dB_s). 
$$
The convergence of $X^n$ in $\mathbb L^2 _{\pi}$ implies that $\xi^n \to \xi$ in $\mathbb L^2 _{\pi}$ as $n\to \infty$, so that $\xi$ is nondecreasing. 
Employing Lemma 3.5 in \cite{K}, we can take c\`adl\`ag versions of $X$ and $\xi$, so that $(X,\xi) \in E$. After repeating the arguments from the proof of Theorem \ref{theorem.main.general}, the proof is complete. 
\end{proof}

\subsection{Nonconvex case without common noise}\label{section nonconvex singular}
In this subsection, we treat a model of mean field games with singular controls and no common noise, for a general drift and a not necessarily convex running cost.
As a consequence, optimal controls are in general not unique. 
In comparison with the previous subsection, this case requires a more technical analysis, which makes use of a weak formulation of the problem in the spirit of \cite{Haussmann&Suo95}. 

\subsubsection{Model formulation}
Let $\sigma \geq 0$ be a constant and $b\colon [0,T] \times \R \to \R$ be a Lipschitz continuous function.\
In order to come up with a weak formulation of the problem, the initial value of the dynamics will be described through a fixed initial distribution $\nu_0 \in \mathcal{P}(\mathbb{R)}$, satisfying $|\nu_0|^{p}:=\int_{\mathbb{R}} |y|^{p} d\nu_0 (y) < \infty$ with $p>1$ from Assumption \ref{ass.submodularity.continuos.time setting}. 
 
\begin{definition}
\label{def.admissible.singular.control}
A tuple $\rho=(\Omega,\mathcal{F},\mathbb{F}, \mathbb{P}, x_0 , W, \xi)$ is said to be an admissible singular control if
\begin{enumerate}
\item $(\Omega,\mathcal{F},\mathbb{F}, \mathbb{P})$ is a filtered probability space satisfying the usual conditions;
\item $x_0$ is an $\mathcal{F}_0$-measurable $\mathbb{R}$-valued random variable with $\mathbb{P}\circ x_0^{-1} = \nu_0$; 
\item $W$ is a standard $(\Omega,\mathcal{F},\mathbb{F}, \mathbb{P})$-Brownian motion;
\item $\xi \colon  \Omega \times [0,T] \to [0,\infty)$ is an $\mathbb{F}$-adapted nondecreasing c\`adl\`ag process.
\end{enumerate}
We denote by $E^w$ the set of admissible singular controls.  
\end{definition}
 
Again, since $b$ is assumed to satisfy the usual Lipschitz continuity and growth conditions, for any $\rho \in E^w$  there exists a unique process $X^\rho:\Omega \times [0,T] \to \mathbb{R}$ solving the system's dynamics equation that now reads as
\begin{equation}
 \label{dynamics.singular.controls}
 X_t^\rho = x_0 + \int_0^t  b(t,X_t^\rho) dt + \sigma W_t + \xi_t, \quad t \in [0,T].  
\end{equation}  
Then, for a measurable flow of probability measures $\mu$, we define the cost functional
\[
J (\rho,\mu) := \mathbb{E}^{\mathbb{P}}\bigg[ \int_0^T  f(t,X_t^\rho,\mu_t) dt  + g(X_T^\rho,\mu_T) + \int_{[0,T]} c_t d\xi_t \bigg], \quad  \rho \in E^w,  
\]
and we say that $\rho \in E^w $ is an \emph{optimal control} for the flow of measures $\mu$ if it solves the optimal control problem related to $\mu$; that is, if $ {J}(\rho,\mu) =\inf_{E^w} {J}(\cdot,\mu)$.

\begin{definition} A measurable flow of probabilities ${\mu}$ is a MFG equilibrium if
\begin{enumerate} 
    \item there exists an optimal control $\rho \in E^w $  for $\mu$,
    \item ${\mu}_t= \mathbb{P} \circ (X_t^{{\rho}})^{-1}$ for any $t\in [0,T]$. 
\end{enumerate} 
\end{definition} 

\subsubsection{Reformulation via control rules and preliminary remarks}
In order to have a topology on the space of admissible controls, we reformulate the problem in terms of control rules. 
We introduce the following canonical space $(\Omega, \mathcal F)$ by
\begin{equation}\label{canonical space}  
\Omega := \R \times \mathcal C \times \mathcal{D} \times \mathcal{D}_{\uparrow}, \quad
\mathcal{F} := \mathcal B (\R) \otimes \mathcal{B}(\mathcal{C}) \otimes \mathcal{B}(\mathcal{D}) \otimes \mathcal{B}(\mathcal{D}_{\uparrow}).
\end{equation}   
We define the set of control rules as
$$
E:= \{ \nu^\rho\, |\,  \rho \in E^w \}, \quad \text{where } \nu^\rho:= \mathbb P \circ (x_0, W, X^\rho, \xi)^{-1} \text{ for }\rho = ({\Omega}, {\mathcal{F}}, {\mathbb{F}}, {\mathbb{P}}, x_0 ,W, \xi) \in E^w,
$$
and, with a slight abuse of notation, we set $J(\nu^\rho,\mu):= J(\rho, \mu)$. 
In this way, $E$ is naturally defined as a subspace of the topological space $\mathbb P (\Omega)$.  

\begin{remark}[Existence of optimal controls]\label{remark singular control BRM nonempty}
Under the standing assumptions, it is shown in \cite{Haussmann&Suo95} that, for each measurable flow of probabilities $\mu$, $J(\cdot, \mu)$ is lower semicontinuous and the set 
$\argmin_{E} J(\cdot, \mu) \subset E$ is nonempty (see Theorem 3.6 and Theorem 3.8 in \cite{Haussmann&Suo95}).    
Also, one can show that (see Theorem 3.7 in \cite{Haussmann&Suo95}), for each sequence  
$
( \nu_n )_{ n \in \mathbb{N}} \subset \argmin_{E} J(\cdot, \mu),
$ 
we can find an admissible singular control 
$
\nu \in \argmin_{E} J(\cdot, \mu)
$
such that, up to a subsequence, $\nu_n$ converges weakly to $\nu$ in $\mathcal P (\Omega)$.   
\end{remark} 

Now, for any measurable flow of measures $\mu$, if $\rho \in \argmin _{E^w} J(\cdot, \mu)$,  we can repeat (with minor modifications) the arguments leading to \eqref{tightness.singular.geometric} in order to get a priori estimates on the moments of optimally controlled trajectories;
namely, we have
\begin{equation}\label{singular nonconvex a priori estimates}  
\mathbb{E}^{{\mathbb{P}}} [|X_t^{\rho}|^p + (\xi_T)^p] \leq M, \quad  \text{for any } t \in [0,T]\text{ and }\rho \in \argmin _{E^w} J(\cdot, \mu),  
\end{equation}
with a constant $M>0$ independent of the flow of measures $\mu$.
Therefore, following computations similar to those leading to \eqref{eq sing max lemma 1} and \eqref{eq sing max lemma 2} (see also Lemma 3.4 in \cite{nendel}), we can find $\mu^{\rm Min},\, \mu^{\rm Max}\in \mathcal P(\R)$ such that, for any flow of measures $\mu$, one has 
\begin{equation}\label{sub mfg sing a priori estimate equation minimizers}
\mu^{\rm Min} \leq^{\text{st}} \mathbb P \circ (X_t^\rho)^{-1} \leq^{\text{st}} \mu^{\rm Max}, \quad  \text{for any } t \in [0,T] \text{ and } \rho \in \argmin _{E^w} J(\cdot, \mu). 
\end{equation}
 
We thus define the set of feasible flows of measures $L$ 
as the set of all equivalence classes (w.r.t.\ the measure $\pi:= dt+\delta_T$ on the interval $[0, T]$) of measurable flows of probabilities $\mu \colon [0, T] \to \mathcal P( \mathbb R)$ with $\mu_t \in [\mu^{\rm Min}, \mu^{\rm Max}]$ for $\pi$-a.a.\ $t \in [0,T]$.  
On $L$ we consider the order relation $\leq^{\text{\tiny{$L$}}}$ given by $\mu \leq^{\text{\tiny{$L$}}} \nu$ if and only if $\mu_t \leq^{\text{st}} \bar{\mu}_t$, for $\pi$-a.a.\ $t\in [0,T]$, with the lattice structure given by
\[
(\mu \land^{\text{\tiny{$L$}}} \bar{\mu})_t:=\mu_t \land^{\text{st}} \bar{\mu}_t \quad \text{and} \quad 
(\mu \lor^{\text{\tiny{$L$}}} \bar{\mu})_t:=\mu_t \lor^{\text{st}} \bar{\mu}_t \quad \text{for } \pi\text{-a.a. } t  \in [0,T].
\] 
Again, this is a particular instance of the lattice $\mathcal L$ considered in Section \ref{section lattices}, and it is, by definition, norm-bounded, As a consequence, $(L,\leq^{\text{\tiny{$L$}}})$ is a complete and Dedekind super complete lattice.
 
Next, we can define the set 
$$
\text{$E^{\text{\tiny{$M$}},w} := \{ \nu \in E^w \, |\,$\eqref{singular nonconvex a priori estimates} holds$\}$\quad and\quad$E^{\text{\tiny{$M$}}}:= \{ \nu^\rho \,|\, \rho \in E^{\text{\tiny{$M$}},w} \}$,} 
$$
so that $\argmin_E J(\cdot, \mu) \subset E^{\text{\tiny{$M$}}}$ for any flow $\mu$.
We observe that, due to the Meyer-Zheng tightness criteria (see Theorem 4 on p.\ 360 in \cite{MZ}), the set $E^{\text{\tiny{$M$}}}$ is a relatively compact subset of $\mathcal P (\Omega)$. 
Moreover, the projection map 
$$
p\colon E \to L\quad\text{with} \quad p(\nu^\rho):=\mathbb P \circ (X^\rho)^{-1}, \quad\text{for } \rho= ({\Omega}, {\mathcal{F}}, {\mathbb{F}}, {\mathbb{P}}, x_0 ,W, \xi) \in E^w, 
$$
satisfies the conditions in Assumption \ref{assumption.bestresponsemap.nonempty} and Assumption \ref{assumption.general.approximation}.

Let $2^L$ be the set of all subsets of $L$. Then, thanks to \eqref{sub mfg sing a priori estimate equation minimizers}, the best-response correspondence $R \colon L \to 2^L$, given by
$R( \mu) := \big\{ p \nu \, \big| \, \nu \in \argmin_E  J(\cdot,\mu) \big\}$ for $\mu \in L$, is well-defined.   
The flow of measures $\mu^* \in L$ is a solution to the mean field game with singular controls if $\mu^* \in R(\mu^*)$. 

\subsubsection{Existence and approximation of solutions} 
In order to employ the results from Section \ref{section general case}, we begin by providing the following technical result.   
\begin{lemma} 
\label{lemma singular control construction of operation}
There exists two operations $\land^{\text{\tiny{$E$}}}, \lor^{\text{\tiny{$E$}}}\colon E^{\text{\tiny{$M$}}} \times E^{\text{\tiny{$M$}}} \to E$ satisfying Assumption \ref{assumption}.
\end{lemma}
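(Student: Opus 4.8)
The plan is to reduce the construction to the \emph{regularly} controlled case treated in \cite{dianetti.ferrari.fischer.nendel.2019} and then to pass to the singular limit. For regular controls, i.e.\ those of the form $\xi_t=\int_0^t\beta_s\,ds$ with $\beta\ge 0$ progressively measurable and bounded, the dynamics \eqref{dynamics.singular.controls} reduce to a regularly (drift-)controlled one-dimensional diffusion driven by $(x_0,W)$, which is exactly the setting of \cite{dianetti.ferrari.fischer.nendel.2019}. There, lattice operations on controls are constructed on a common filtered probability space carrying a single pair $(x_0,W)$: given two regular controls with states $X^1,X^2$, the operations produce controls whose states are the pointwise minimum $X^1\wedge X^2$ and maximum $X^1\vee X^2$. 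The structural fact that makes this work is that $X^1$ and $X^2$ are driven by the \emph{same} Brownian motion with the \emph{same} constant diffusion coefficient $\sigma$, so that $X^1-X^2$ is a continuous process of finite variation; Tanaka's formula applied to $X^1\wedge X^2$ and $X^1\vee X^2$ then carries no local-time contribution and identifies these as admissibly controlled diffusions. The same reference verifies, for these operations, the analogues of Condition \ref{assumption.existence.operation} and, exploiting the decreasing-differences structure of $f$ and $g$ from Assumption \ref{ass.submodularity.continuos.time setting}, of Condition \ref{assumption.submod.J}.

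With this at hand, I would construct $\land^{\text{\tiny{$E$}}},\lor^{\text{\tiny{$E$}}}$ as follows. Given $\nu^1,\nu^2\in E^{\text{\tiny{$M$}}}$, first realize both on a common filtered probability space sharing the same initial datum $x_0$ and Brownian motion $W$; this is possible because both control rules have the same $(x_0,W)$-marginal $\nu_0\otimes\mathbb{W}$, so a coupling fixing that marginal exists. On this space I would regularize the two singular controls by a time-mollification, e.g.\ $\xi^{i,k}_t:=k\int_{(t-1/k)^+}^{t}\xi^i_s\,ds$, which preserves adaptedness, nonnegativity and monotonicity, keeps the moment bound \eqref{singular nonconvex a priori estimates} uniform in $k$, and converges back to $\xi^i$ as $k\to\infty$. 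Applying the regular-control operations of \cite{dianetti.ferrari.fischer.nendel.2019} to the pairs $(\xi^{1,k},\xi^{2,k})$ yields regular controls with states $X^{1,k}\wedge X^{2,k}$ and $X^{1,k}\vee X^{2,k}$; denoting by $\nu^{\wedge,k},\nu^{\vee,k}\in E$ the associated control rules, the uniform estimate \eqref{singular nonconvex a priori estimates} together with the Meyer--Zheng tightness criterion (as in \cite{MZ}, already used for $E^{\text{\tiny{$M$}}}$) makes $(\nu^{\wedge,k})_k$ and $(\nu^{\vee,k})_k$ relatively compact in $\mathcal P(\Omega)$. Extracting convergent subsequences and using that weak limits of moment-bounded control rules are again control rules (the closedness underlying Remark \ref{remark singular control BRM nonempty}, see \cite[Theorem 3.7]{Haussmann&Suo95}), I would \emph{define} $\nu^1\land^{\text{\tiny{$E$}}}\nu^2$ and $\nu^1\lor^{\text{\tiny{$E$}}}\nu^2$ as these limits, which therefore lie in $E$.

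It then remains to verify Assumption \ref{assumption} for the limit operations. For Condition \ref{assumption.existence.operation}, the pathwise domination $X^{1,k}\wedge X^{2,k}\le X^{i,k}\le X^{1,k}\vee X^{2,k}$ gives, at the level of laws, $p(\nu^{\wedge,k})_t\le^{\text{st}}p(\nu^{i,k})_t\le^{\text{st}}p(\nu^{\vee,k})_t$; since first-order stochastic dominance is preserved under weak convergence (it is tested against bounded nondecreasing continuous functions), passing to the limit yields $p(\nu^1\land^{\text{\tiny{$E$}}}\nu^2)\le^{\text{\tiny{$L$}}}p\nu^i\le^{\text{\tiny{$L$}}}p(\nu^1\lor^{\text{\tiny{$E$}}}\nu^2)$ for $i=1,2$, and taking meet and join over $i$ gives exactly the chain in Condition \ref{assumption.existence.operation}. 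For Condition \ref{assumption.submod.J}, I would first establish the two inequalities at the regularized level by reproducing the computation of \eqref{gbm eq submodularity J}--\eqref{gbm eq decreasing differences J}: the identity $X^{1,k}\vee X^{2,k}-X^{2,k}=X^{1,k}-X^{1,k}\wedge X^{2,k}$ together with the additive structure of the control cost yields the submodularity equality, while the decreasing differences of $f(t,\cdot,\cdot)$ and $g$ from Assumption \ref{ass.submodularity.continuos.time setting} yield the decreasing-differences inequality in the measure argument. These relations are then transferred to the limit operations.

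The main obstacle I anticipate is precisely this last limit passage. Stochastic dominance survives weak limits painlessly, but $J$ is only lower semicontinuous (Remark \ref{remark singular control BRM nonempty}), so preserving an \emph{inequality} between cost \emph{differences} requires matching a $\liminf$ on one side with genuine convergence on the other; this has to be extracted from the monotonicity of the mollified controls and the two-sided growth bounds of Assumption \ref{ass.submodularity.continuos.time setting}, rather than from lower semicontinuity alone. A secondary technical point is to ensure that the mollification keeps the controls admissible with moment bounds uniform in $k$ so that the operations of \cite{dianetti.ferrari.fischer.nendel.2019} apply on the common space, and that the weak limits retain the finite-variation structure of genuine singular controls rather than developing a spurious martingale part.
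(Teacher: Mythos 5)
Your overall architecture matches the paper's: realize both control rules on a common basis with the same $(x_0,W)$, mollify the singular controls in time as in \eqref{eq wong-zakai definition}, apply the regular-control lattice operations of \cite{dianetti.ferrari.fischer.nendel.2019} so that the approximating states are $X^{1,n}\wedge X^{2,n}$ and $X^{1,n}\vee X^{2,n}$, establish the submodularity identity \eqref{eq J submod befor limits} at the regularized level, and pass to the limit using the two-sided growth bounds (dominated convergence on one side, Fatou on the other). The decisive point where you deviate, and where a genuine gap opens, is the definition of the limit operations. You propose to define $\nu^1\land^{\text{\tiny{$E$}}}\nu^2$ and $\nu^1\lor^{\text{\tiny{$E$}}}\nu^2$ as weak subsequential limits of the regularized control rules extracted via Meyer--Zheng tightness. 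As written this does not define an \emph{operation}: different subsequences could a priori yield different limit laws, and you would still have to prove that any such limit is a control rule whose state is $X^1\wedge X^2$ (resp.\ $X^1\vee X^2$) --- precisely the ``spurious martingale part'' worry you flag yourself. The paper avoids all of this by \emph{never leaving the common probability space}: it defines $\xi^{\land}_t:=X^1_t\land X^2_t-x_0-\int_0^t b(s,X^1_s\land X^2_s)\,ds-\sigma W_t$ (and analogously for $\lor$) and then shows, via the pointwise $\pi$-a.e.\ convergences \eqref{eq subsing xi n to xi}--\eqref{eq X1 land X2 to X1 land X2}, that the mollified operations converge to these explicit processes, which are therefore nondecreasing and admissible. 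This explicit identification is what makes the operation well defined, gives Condition \ref{assumption.existence.operation} immediately from $X^{\rho^\land}=X^1\land X^2\le X^1\lor X^2=X^{\rho^\lor}$, and supplies the a.s.\ pointwise convergence plus the Gr\"onwall-type domination \eqref{eq subsing dominate estimate} needed to run dominated convergence for $J(\rho^{i,n},\mu)\to J(\rho_i,\mu)$ alongside Fatou for $J(\rho^{\land},\mu)\le\liminf_n J(\rho^{\land,n},\mu)$. Under mere weak convergence of laws you would not have this pointwise control, so the limit passage you identify as ``the main obstacle'' would indeed remain unresolved in your set-up, whereas it closes cleanly in the paper's.

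A secondary but non-cosmetic omission: the mollified controls are Lipschitz only on $[0,T)$ and the cost $\int_{[0,T]}c_t\,d\xi_t$ charges the terminal jump, so the approximations must be corrected at $t=T$ (the paper sets $\xi^{i,n}_T:=\xi^i_T$ and defines $\xi^{\land,n}_T,\xi^{\lor,n}_T$ separately) and the cost must be decomposed by integration by parts into pieces depending on the control only on $[0,T)$ plus a terminal term, before the identity \eqref{eq J submod befor limits} can be proved. Your sketch of the submodularity identity via $X^{1,k}\vee X^{2,k}-X^{2,k}=X^{1,k}-X^{1,k}\wedge X^{2,k}$ is correct in spirit but silently ignores this boundary issue.
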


\begin{proof} 
The argument exploits an approximation scheme of the singular controls through regular controls and the results derived in \cite{dianetti.ferrari.fischer.nendel.2019}. 
We divide the proof in four steps.

\smallbreak\noindent  
\emph{Step 1.} 
For $i=1,2$, take control rules $\nu_i= \nu^{\rho_i} \in E^M$ with $\rho_i = (\Omega^i, \mathcal{F}^i, \mathbb F ^i,  \mathbb P ^i, x_0^i, W^i, \xi^i) \in E^{M,w}$.
Without loss of generality, we can assume that the controls $\rho_1, \rho_2$ are defined on a same stochastic basis $(\Omega, \mathcal{F}, \mathbb F ,  \mathbb P , x_0, W)$; that is, $(\Omega^i, \mathcal{F}^i, \mathbb F ^i,  \mathbb P ^i, x_0^i, W^i)=(\Omega, \mathcal{F}, \mathbb F ,  \mathbb P , x_0, W)$, for $i=1,2$. 

Introduce a Wong-Zakai-type approximation of $\xi^i$ by defining the sequences of processes $ ( \xi^{i,n} )_{n\in \mathbb{N}}$ through  
\begin{align}
\label{eq wong-zakai definition} 
     \xi_t^{i,n}:= 
     \begin{cases}
    n \int_{t-1/n}^t \xi_s^i ds,  & t \in [0,T), \\
     \xi_T^i, & t=T,
     \end{cases}
\end{align}
for each $n \in \mathbb{N}$.
Recall that processes are always (implicitly) assumed to be equal to $0$ for negative times. 
Further, note that, since $\mathbb E ^{\mathbb P} [|\xi_T^i|^p] < \infty$ (recall that $\xi^i \in E^{M,w}$ by assumption), the processes $\xi^{i,n}$ are Lipschitz continuous on the time interval $[0,T)$. However, they may have  discontinuities at time $T$. 
Moreover, for each $i=1,2$ and all $n \in \mathbb N$, denote by $X^{i,n}$ the solution to the controlled SDE
\begin{equation*}
X_t^{i,n}= x_0 + \int_0^t b(s,X_s^{i,n}) ds  + \sigma W_t +  \xi_t^{i,n}, \quad t \in [0,T].     
\end{equation*}

Next, since the processes $ \xi^{i,n}$ have Lipschitz paths and are nondecreasing, we can find $\mathbb F$-adapted processes $u^{i,n}\colon \Omega \times [0,T] \to [0,\infty)$ such that
$$
\xi_t^{i,n}= \int_0^t u_s^{i,n} ds, \quad t \in [0,T).
$$
Observing that the processes $u^{i,n}$ can be regarded as regular controls, we wish to employ the results from \cite{dianetti.ferrari.fischer.nendel.2019} in order to construct $\rho^\land, \, \rho^\lor$. 
However, we need to take care of possible discontinuities at time $T$.

As in Lemma 2.10 in \cite{dianetti.ferrari.fischer.nendel.2019}, for each $n \in \mathbb N$, we find two $\mathbb F$-adapted $[0,\infty)$-valued processes $u^{\land, n}, u^{\lor,n}$ such that, defining $\P$-a.s., 
\begin{equation}
\label{eq v inf n, v sup n}
 \xi_t^{\land,n}:=\int_0^t u_s^{\land,n} ds 
 \quad \text{and} \quad  
 \xi_t^{\lor,n}:=\int_0^t u_s^{\lor,n} ds, \quad \text{for each } t \in [0,T),
\end{equation}
we have, $\text{for each } t \in [0,T),\; \mathbb{P}\text{-a.s.}$, 
\begin{align}
\label{eq SDE for sup and inf n} 
    X_t^{1,n}\land X_t^{2,n} & = x_0 + \int_0^t b(s, X_s^{1,n} \land X_s^{2,n}) ds + \sigma W_t + \xi_t^{\land,n} \quad\text{and}  \\ \notag 
    X_t^{1,n} \lor X_t^{2,n} & = x_0 + \int_0^t b(s, X_s^{1,n} \lor X_s^{2,n}) ds + \sigma W_t + \xi_t^{\lor,n} .  
\end{align} 
This suggest to define the processes $\xi^{\land,n}$ and $\xi^{\lor,n}$ at time $T$ by setting, $\mathbb P$-a.s.,
\begin{align*}
    \xi_T^{\land,n} &:= X_T^{1,n}\land X_T^{2,n} - x_0 - \int_0^T b(s, X_s^{1,n} \land X_s^{2,n}) ds - \sigma W_T \quad\text{and}  \\ \notag 
    \xi_T^{\lor,n}  &:= X_T^{1,n} \lor X_T^{2,n} - x_0 - \int_0^T b(s, X_s^{1,n} \lor X_s^{2,n}) ds - \sigma W_T.  
\end{align*} 
We define
\begin{align*}
\rho^{\land, n} &:= ( \Omega, \mathcal F, \mathbb F, \mathbb P, x_0, W, \xi^{\land, n}), \\
 \rho^{\lor, n} &:= ( \Omega, \mathcal F, \mathbb F, \mathbb P, x_0, W, \xi^{\lor, n}),  \\
 \rho^{i, n}    &:= ( \Omega, \mathcal F, \mathbb F, \mathbb P, x_0, W, \xi^{i, n}), \quad\text{for } i=1,2,
\end{align*}
so that, by virtue of \eqref{eq SDE for sup and inf n} and the definition of $\xi_T^{\land,n}$ and $\xi_T^{\lor,n}$, we obtain, $\mathbb P$-a.s.,
\begin{equation}\label{eq mfgsing real SDE prelimit} 
   X_t^{1,n}\land X_t^{2,n} = X_t^{\rho^{\land,n}} \quad \text{and} \quad  X_t^{1,n} \lor X_t^{2,n} = X_t^{\rho^{\lor,n}}, \quad  \text{for any $t\in [0,T]$.}
\end{equation}
Moreover, we observe that the processes $\xi^{\land,n}$ and $\xi^{\lor,n}$ are nondecreasing. 

\smallbreak\noindent
\emph{Step 2}. In this step, we prove that
\begin{equation}\label{eq J submod befor limits}
J(\rho^{\land,n}, \mu) +J(\rho^{\lor,n}, \mu) = J(\rho^{1,n}, \mu) + J(\rho^{2,n}, \mu).
\end{equation}
This is again done by adapting arguments from \cite{dianetti.ferrari.fischer.nendel.2019}, taking care of possible discontinuities of the processes $\xi^{i,n}, \ \xi^{\land,n}, \ \xi^{\lor,n} $ at time $T$. 

For a generic admissible control $\rho= (\Omega, \mathcal{F}, \mathbb F, \mathbb P, x_0, W, \xi) \in E^w$, using integration by parts and the controlled SDE \eqref{dynamics.singular.controls}, we rewrite the cost functional as
\begin{align}\label{eq mfgsing Jrepresent}
J(\xi,\mu) & = \mathbb E ^{\mathbb P} \bigg[ \int_0^T f(t,X_t^{\rho}, \mu_t) dt  + g(X_T^{\rho}, \mu_T) + c_T \xi_T - \int_0^T \xi_t c_t' dt \bigg] \\ \notag
& = \mathbb E ^{\mathbb P} \bigg[ \int_0^T \big( f(t,X_t^{\rho}, \mu_t) - c_T b(t,X_t^{\rho} ) - \xi_t c_t'\big) dt  + g(X_T^{\rho}, \mu_T) + c_T X_T^{\rho} \bigg]  - c_T \mathbb E ^{\mathbb P} [x_0] \\ \notag
& = G^1 ( \rho, \mu ) - G^2 (\rho, \mu)  + H(\rho,\mu) - c_T \mathbb E ^{\mathbb P} [x_0], \notag
\end{align}
where we have set
\begin{align*}
    G^1 ( \rho, \mu ) & := \mathbb E ^{\mathbb P} \bigg[ \int_0^T \big( f(t,X_t^{\rho}, \mu_t) - c_T b(t,X_t^{\rho} )\big) dt \bigg], \\
    G^2 (\rho, \mu) & := \mathbb E ^{\mathbb P} \bigg[ \int_0^T \xi_t c_t' dt \bigg], \\
     H(\rho,\mu) & := \mathbb E ^{\mathbb P} [  g(X_T^{\rho}, \mu_T) + c_T X_T^{\rho} ].
\end{align*}
Observing that the functional $G^1$ depends on the control only on the interval $[0,T)$, thanks to the construction of $u^{\land,n},\, u^{\lor,n} $ provided in the Step 1, we can repeat the arguments in the proof of Lemma 2.11 in \cite{dianetti.ferrari.fischer.nendel.2019} in order to come up with
\begin{equation}\label{eq J submod befor limits 1}
G^1(\rho^{\land,n}, \mu) + G^1(\rho^{\lor,n}, \mu) = G^1(\rho^{1,n}, \mu) + G^1 (\rho^{2,n}, \mu).
\end{equation}
Moreover, from the definition of $u^{\land, n}$ and $u^{\lor, n}$ in Step 1, as in the proof of  Lemma 2.11 in \cite{dianetti.ferrari.fischer.nendel.2019}, we see that
$$
\xi_t^{\land, n} + \xi_t^{\lor, n} = \int_0^t (u_s^{\land, n} + u_s^{\lor, n}) ds = \int_0^t (u_s^{1, n} + u_s^{2, n}) ds =\xi_t^{1, n} + \xi_t^{2, n}, \quad \text{for each $t \in [0,T)$},
$$
so that
\begin{equation}\label{eq J submod befor limits 2}
G^2(\rho^{\land,n}, \mu) + G^2(\rho^{\lor,n}, \mu) = G^2(\rho^{1,n}, \mu) + G^2 (\rho^{2,n}, \mu).
\end{equation}
Finally, we easily find that
\begin{equation}\label{eq J submod befor limits 3}
H(X_T^{\rho^{\land,n}}, \mu) + H(X_T^{\rho^{\lor,n}}, \mu) = H (X_T^{\rho^{1,n}}, \mu) + H (X_T^{\rho^{2,n}},\mu).
\end{equation}
Therefore, adding \eqref{eq J submod befor limits 1}, \eqref{eq J submod befor limits 2}, and \eqref{eq J submod befor limits 3}, and using the representation in \eqref{eq mfgsing Jrepresent}, we obtain \eqref{eq J submod befor limits}.

\smallbreak\noindent
\emph{Step 3.} 
Set $X^i:=X^{\rho_i}$, $i=1,2$, and define the right-continuous processes $\xi^\land, \ \xi^\lor$ by setting 
\begin{align}
    \xi_t^\land & := X_t^1\land X_t^2 - x_0 - \int_0^t b(s, X_s^1 \land X_s^2) ds - \sigma W_t, \\ \notag
    \xi_t^\lor & := X_t^1\lor X_t^2 - x_0 - \int_0^t b(s, X_s^1 \lor X_s^2) ds - \sigma W_t .
\end{align}
The aim of this step is to prove that the controls $\rho^\land:=(\Omega, \mathcal{F}, \mathbb F ,  \mathbb P , x_0, W, \xi^\land)$ and $\rho^\lor:=(\Omega, \mathcal{F}, \mathbb F ,  \mathbb P , x_0, W, \xi^\lor)$ are admissible, and that the control rules 
$$ 
\nu_1 \land^{\text{\tiny{$E$}}} \nu_2 :=\nu^{\rho^\land} \quad \text{and} \quad  \nu_1 \lor^{\text{\tiny{$E$}}} \nu_2 :=\nu^{\rho^\lor}
$$
satisfy the conditions in Assumption \ref{assumption}.

From \eqref{eq wong-zakai definition}, we immediately see that, $\mathbb P$-a.s.,
\begin{equation}\label{eq subsing xi n to xi}
\begin{cases}
\xi_t^{i,n} \to \xi_t^i \text{ as $n\to \infty$ for all continuity points $t \in [0,T)$ of $\xi^i$},    \\
\xi^{i,n}_T \to \xi^i_T  \text{ as $n\to \infty$}.
\end{cases} 
\end{equation}
Therefore, using \eqref{eq subsing xi n to xi} and Gr\"onwall's inequality, we deduce that,  $\mathbb P$-a.s.,
\begin{equation}\label{eq subsing X n to X}
\begin{cases}
X_t^{i,n} \to X_t^i \text{ as $n\to \infty$ for all continuity points $t \in [0,T)$ of $X^i$},    \\
X^{i,n}_T \to X^i_T  \text{ as $n\to \infty$}.
\end{cases} 
\end{equation} 
This allows to take limits in \eqref{eq mfgsing real SDE prelimit} in order to conclude that, $\mathbb P$-a.s., for $\pi$-a.a.\ $t \in [0,T]$, we have
\begin{equation}\label{eq X1 land X2 to X1 land X2}
    X_t^{1,n}\land X_t^{2,n} \to X_t^{1}\land X_t^{2}, \quad  X_t^{1,n}\lor X_t^{2,n} \to X_t^{1}\lor X_t^{2}, \quad \xi_t^{\land,n} \to \xi_t^\land,  \quad \xi_t^{\lor,n} \to \xi_t^\lor. 
\end{equation} 
Since the processes $\xi ^{\land,n}$ and $\xi^{\lor,n}$ are nonnegative and nondecreasing, also the limit processes $\xi^\land$ and $\xi^\lor$ are nonnegative and nondecreasing, hence $\rho^\land$ and $\rho^\lor$ are admissible. 
Moreover, by definition of $\xi^\land$ and $\xi^\lor$, we have 
\begin{equation*}
 X_t^{\rho^\land} = X_t^1 \land X_t^2 \leq   X_t^1 \lor X_t^2 \leq X_t^{\rho^\lor},\; \mathbb P \text{-a.s.}, \quad \text{for each $t \in [0,T]$,} 
\end{equation*}
which proves that $\nu_1 \land^{\text{\tiny{$E$}}} \nu_2$ and $\nu_1 \lor^{\text{\tiny{$E$}}} \nu_2$ satisfy Condition \ref{assumption.existence.operation} in Assumption \ref{assumption}. 

\smallbreak\noindent
\emph{Step 4.} We conclude by proving that $\nu_1 \land^{\text{\tiny{$E$}}} \nu_2$ and $\nu_1 \lor^{\text{\tiny{$E$}}} \nu_2$  satisfy Condition \ref{assumption.submod.J} in Assumption \ref{assumption}.  
We begin by observing that, for a generic constant $C>0$, by Gr\"onwall's inequality, we have
$$
|X_t^{i,n}|^p \leq C \Big ( 1+ |x_0|^p + \sigma^p \sup_{s \in [0,T]} |W_s|^p  +    |\xi_T^{i,n}|^p \Big),
$$
so that, by definition of $\xi^{i,n}$, we obtain
\begin{equation}\label{eq subsing dominate estimate}
\sup_n \sup_{t \in [0,T]} |X_t^{i,n}|^p \leq C \Big ( 1+ |x_0|^p + \sigma^p \sup_{s \in [0,T]} |W_s|^p  +    |\xi_T^{i}|^p \Big) \in \mathbb L^1 (\Omega ; \mathbb P), 
\end{equation}
where the integrability condition of the right hand side follows from the fact that  $\nu_1, \, \nu_2 \in E^{\text{\tiny{$M$}}}$. 
Therefore, thanks to the convergences in \eqref{eq subsing xi n to xi} and \eqref{eq subsing X n to X} and the estimate \eqref{eq subsing dominate estimate}, the growth conditions on $f$ and $g$ allows to employ the dominated convergence theorem in order to come up with
\begin{align}\label{eq singular lim J submodular 0}
J(\rho_i, \mu)   
& = \mathbb E ^{\mathbb P} \bigg[ \int_0^T f(t,X_t^{\rho_i}, \mu_t) dt  + g(X_T^{\rho_i}, \mu_T) + \int_{[0,T]} c_t d\xi^{i}_t \bigg] \\ \notag 
& = \lim_n \mathbb E ^{\mathbb P} \bigg[ \int_0^T f(t,X_t^{\rho^{i,n}}, \mu_t) dt  + g(X_T^{\rho^{i,n}}, \mu_T) + \int_{[0,T]} c_t d\xi^{i,n}_t \bigg]  = \lim_n J(\rho^{i,n}, \mu).
\end{align}
Now, an integration by parts together with the limit behaviour in \eqref{eq X1 land X2 to X1 land X2} and Fatou's lemma yields the estimate
\begin{align}\label{eq singular lim J submodular 1}
J(\rho^\land, \mu) 
& = \mathbb E ^{\mathbb P} \bigg[ \int_0^T f(t,X_t^{\rho^\land}, \mu_t) dt  + g(X_T^{\rho^\land}, \mu_T) + c_T \xi_T^\land - \int_0^T \xi^\land_t c_t' dt  \bigg] \\ \notag
& \leq \liminf_n \mathbb E ^{\mathbb P} \bigg[ \int_0^T f(t,X_t^{\rho^{\land,n}}, \mu_t) dt  + g(X_T^{\rho^{\land,n}}, \mu_T) + c_T \xi_T^{\land,n} - \int_0^T \xi^{\land,n}_t c_t' dt  \bigg]\\ \notag 
& = \liminf_n J(\rho^{\land,n}, \mu).
\end{align}     
Similarly, it follows that
\begin{align}\label{eq singular lim J submodular 2} 
J(\rho^\lor, \mu) \leq \liminf_n J(\rho^{\lor,n}, \mu). 
\end{align}  
Finally, exploiting \eqref{eq singular lim J submodular 0}, \eqref{eq singular lim J submodular 1}, and \eqref{eq singular lim J submodular 2}, we can take limits in \eqref{eq J submod befor limits} in order to obtain Condition \ref{assumption.submod.J} in Assumption \ref{assumption}.
\end{proof}  

Thanks to Lemma \ref{lemma singular control construction of operation} and Remark \ref{remark singular control BRM nonempty}, we see that all Assumption \ref{assumption.bestresponsemap.nonempty} and \ref{assumption} are satisfied. As a consequence of Theorem \ref{theorem.main.general} we have the following result.
\begin{theorem}\label{thm singular weak formulation} 
The set of mean field game equilibria $\mathcal M$ is nonempty with $\inf \mathcal M \in \mathcal M$ and $\sup \mathcal M \in \mathcal M$. Moreover, if $f$ and $g$ are continuous in $(x,\mu)$, then
\begin{enumerate}
 \item the learning procedure $\underline{\mu}^n$ defined inductively by $\underline{\mu}^0=\inf L$ and $\underline{\mu}^{n+1}=\inf R(\underline{\mu}^n)$ is nondecreasing in $L$ and it converges to the minimum MFG solution,
 \item the learning procedure $\overline{\mu}^n$ defined inductively by $\overline{\mu}^0=\sup L$ and $\overline{\mu}^{n+1}=\sup R(\overline{\mu}^n)$ is nonincreasing in $L$ and it converges to the maximal MFG solution.  
\end{enumerate}
\end{theorem}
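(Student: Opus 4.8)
The plan is to obtain the statement as a direct application of Theorem \ref{theorem.main.general}, once the abstract hypotheses are verified in the present setting. First I would collect the submodularity structure: Lemma \ref{lemma singular control construction of operation} constructs the operations $\land^{\text{\tiny{$E$}}}, \lor^{\text{\tiny{$E$}}}$ on $E^{\text{\tiny{$M$}}}$ fulfilling both Condition \ref{assumption.existence.operation} and Condition \ref{assumption.submod.J} of Assumption \ref{assumption}. Since $\argmin_E J(\cdot,\mu)\subset E^{\text{\tiny{$M$}}}$ for every flow $\mu$, these operations are only ever applied to minimizers in the abstract arguments, so Assumption \ref{assumption} is effectively in force on the relevant domain. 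Next, Remark \ref{remark singular control BRM nonempty} (namely Theorems~3.6--3.8 in \cite{Haussmann&Suo95}) gives the lower semicontinuity of $J(\cdot,\mu)$ and the nonemptiness of $\argmin_E J(\cdot,\mu)$, while Theorem~3.7 there, together with the Meyer--Zheng relative compactness of $E^{\text{\tiny{$M$}}}$, supplies the weak sequential compactness of sequences of minimizers; combined with the compatibility of the projection $p$ with weak limits recorded before the theorem, this yields Assumption \ref{assumption.bestresponsemap.nonempty}. Part a) of the theorem --- the nonemptiness of $\mathcal M$ and the membership $\inf\mathcal M,\ \sup\mathcal M\in\mathcal M$ --- then follows immediately from part a) of Theorem \ref{theorem.main.general}.

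For the approximation statements I would invoke the extra hypothesis that $f$ and $g$ are continuous in $(x,\mu)$. Combined with the growth bounds in Assumption \ref{ass.submodularity.continuos.time setting} and the uniform moment estimate \eqref{singular nonconvex a priori estimates}, a dominated-convergence argument as in \eqref{eq singular lim J submodular 0} shows that $J(\nu,\cdot)$ is continuous along monotone sequences in $L$ and that the lower-semicontinuity requirement in Assumption \ref{assumption.general.approximation} holds. The genuinely topological ingredient --- sequential compactness along monotone best-response sequences --- is furnished here directly by the Meyer--Zheng relative compactness of $E^{\text{\tiny{$M$}}}$, in contrast to the convex case of Subsection \ref{section geometric BM}, where only monotone convergence in $\mathbb L^2_\pi$ was available and the proof of Theorem \ref{theorem.main.general} had to be recovered by hand. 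Hence Assumption \ref{assumption.general.approximation} is in force, and parts b) and c) of Theorem \ref{theorem.main.general} deliver the monotone convergence of the two learning procedures $\underline\mu^n=\inf R(\underline\mu^{n-1})$ and $\overline\mu^n=\sup R(\overline\mu^{n-1})$ to $\inf\mathcal M$ and $\sup\mathcal M$, respectively.

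The step I expect to require the most care is the identification, within Assumption \ref{assumption.general.approximation}, of the weak limit of a monotone sequence of minimizers with the lattice supremum (or infimum) of its projections: given $\nu^n\in\argmin_E J(\cdot,\underline\mu^{n-1})$ with $p\nu^n=\underline\mu^n$ nondecreasing in $L$, one must verify that a weak limit point $\nu_*$ of $(\nu^n)_n$ satisfies $p\nu_*=\sup_n p\nu^n$. The argument I would use is to pass to a subsequence converging in the pseudopath topology (hence in the measure $\pi=dt+\delta_T$), which produces weak convergence of the time-marginal laws $p\nu^{n_j}_t$ for $\pi$-almost every $t$; since these marginals are nondecreasing in the stochastic order $\leq_{\rm st}$, Lemma \ref{minimaandmaxima} forces their weak limit to coincide with $\sup_j p\nu^{n_j}_t$, giving $p\nu_*=\sup_j p\nu^{n_j}$ in $L$. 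The monotonicity exploited here is exactly the output of the submodular best-response structure encoded in Lemma \ref{lemma.bestresponse}.

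Once this identification is available, the remainder of the proof follows the template of Theorem \ref{theorem.main.general}b): setting $\mu_*:=\sup_n\underline\mu^n$, one passes to the limit in the optimality inequality $J(\nu^{n_j},\underline\mu^{n_j-1})\leq J(\nu,\underline\mu^{n_j-1})$ through the continuity of $J$ in the measure argument to conclude that $\nu_*\in\argmin_E J(\cdot,\mu_*)$, so that $\mu_*=p\nu_*\in R(\mu_*)$ is an equilibrium; and the inductive comparison $\underline\mu^n\leq^{\text{\tiny{$L$}}}\mu$, valid for every equilibrium $\mu$ by Lemma \ref{lemma.bestresponse}b), shows $\mu_*=\inf\mathcal M$. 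The statement for $\overline\mu^n$ and $\sup\mathcal M$ is entirely symmetric, interchanging infima and suprema and the roles of nondecreasing and nonincreasing sequences.
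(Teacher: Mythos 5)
Your proposal is correct and follows essentially the same route as the paper: the result is obtained as a direct application of Theorem \ref{theorem.main.general}, with Assumption \ref{assumption} supplied by Lemma \ref{lemma singular control construction of operation}, Assumption \ref{assumption.bestresponsemap.nonempty} by Remark \ref{remark singular control BRM nonempty} together with the Meyer--Zheng relative compactness of $E^{\text{\tiny{$M$}}}$ and the properties of the projection $p$, and Assumption \ref{assumption.general.approximation} by the continuity of $f$ and $g$ in $(x,\mu)$ combined with the a priori estimate \eqref{singular nonconvex a priori estimates}. The paper's own proof is considerably terser; your additional observations (that the lattice operations need only be defined on minimizers, and that weak limits of monotone sequences of projected marginals are identified with lattice suprema via Lemma \ref{minimaandmaxima}) are correct elaborations of steps the paper leaves implicit.
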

\subsection{Remarks and extensions} 
The previous arguments can be easily adapted in order to cover many classical settings, which typically arise in the literature on stochastic singular control, such as, for example, MFGs where the optimization problem concerns an infinite time-horizon discounted criterion or involves controls of bounded variation, rather than just monotone. A similar setting has been, for example, considered in \cite{Guo&Xu18}. 
In the following, we illustrate a few specific settings of interest.  

\begin{remark}[Controlled Ornstein-Uhlenbeck process and common noise]
We underline that the results of Subsection \ref{section geometric BM} can also be obtained if the underlying dynamics is given by a controlled Ornstein-Uhlenbeck process; that is, if the state process evolves according to
\begin{equation*}
dX_t^\xi = \theta (  \lambda - X_t^\xi ) dt + \sigma dW_t  + \sigma^o dB_t + d\xi_t, \ t\in [0,T], \quad X_{0-}^\xi = x_0, \ 
\end{equation*} 
with $\kappa,\, \lambda \in \mathbb R, \ \sigma, \, \sigma^o \geq 0$. In this case, the state process can be explicitly written as
$$
X_t^\xi= e^{-\theta t}\bigg( x +  \lambda (e^{\theta t} -1) + \int_0^t e^{\theta s } (\sigma dW_s + \sigma^o dB_s) + \int_{[0,t]} e^{\theta s} d\xi_s \bigg),
$$
and, for $\xi,\, \bar{\xi} \in \mathcal{ V}_{\uparrow}$, we have $X^\xi \land X^{\bar{\xi}} = X^{\xi^\land} $ and $X^\xi \lor X^{\bar{\xi}} = X^{\xi^\lor} $ by setting
$$
\xi_t^\land:= \int_{[0,t]} e^{-\theta s} d (\zeta \land \bar{\zeta})_s, \quad 
\xi_t^\lor:= \int_{[0,t]} e^{-\theta s} d (\zeta \lor \bar{\zeta})_s, \quad
\zeta:= \int_{[0,t]} e^{\theta s} d\xi_s, \quad 
\bar \zeta:= \int_{[0,t]} e^{\theta s} d \bar \xi_s.
$$
Therefore one can introduce, as in \eqref{equation singular definition operation linear}, operations that satisfy all the requirements from Assumption \ref{assumption}.
\end{remark}

\begin{remark}[Mean-field-dependent dynamics and relation to \cite{Campietal}] The approach from Subsection \ref{section geometric BM} also allows to cover problems, where the drift of the underlying state process depends in an increasing way (w.r.t.\ first-order stochastic dominance) on the mean field, in such a way that \ref{tightness.singular.geometric} holds true. This could be, for example, achieved if $b$ in \eqref{SDE.singular.geometric} is replaced by a bounded increasing function of $(\mu_t)_{t\in[0,T]}$.

Another example is given by the two-dimensional MFG of finite-fuel capacity expansion considered in \cite{Campietal}. Therein, the mean of a uniformly bounded purely controlled process affects in a nondecreasing way the drift of an uncontrolled It\^o-diffusion and there is no mean field dependence in the profit functional. We refer to Remark 3.15 in \cite{Campietal} for additional details on how the existence of a mean field equilibrium for the problem considered in that paper can be indeed achieved via our lattice-theoretic techniques.
\end{remark}

\section{Submodular mean field games with reflecting boundary conditions}
\label{section reflected diff}

In this section, we consider a MFG model with reflecting boundary conditions, in which the state process of the representative player is forced to remain in a certain interval of the state space.
These types of models were recently introduced in \cite{bayraktar.budhiraja.cohen.2019.AAP} (see also \cite{bayraktar.budhiraja.cohen.2018numerical}), motivated by applications to queueing systems consisting of many strategic servers that are weakly interacting.
Also, a particular setting in the same class of models is studied in \cite{graber.mouzouni.2020}, motivated by a model for the production of exhaustible resources.
Here, we consider a version of the model in \cite{bayraktar.budhiraja.cohen.2019.AAP} with submodular cost, which we solve through the results of Section \ref{section general case}. 

\subsection{Formulation of the model}  
Fix $M>0$ and $x_0 \in [0,M]$. 
Consider the set $L_{\text{\tiny{$M$}}}$ of all measurable functions $\mu:[0,T] \to \text{$\mathcal{P}([0,M])$ with $\mu_0 = \delta_{x_0}$}$,
endowed with the lattice structure coming from the order relation $\leq^{\text{\tiny{$L$}}}$ of $\pi := \delta _0 + dt + \delta_T$-pointwise first order stochastic dominance. As in the previous section, this leads to a complete lattice $(L_{\text{\tiny{$M$}}}, \leq^{\text{\tiny{$L$}}})$. 

Next, we introduce the minimization problem. 
For technical reasons (i.e., in order to gain compactness of the set of controls), we do so by using relaxed controls, though we work with assumptions under which strict optimal controls always exist. 
For a compact control set $A \subset \R$, and  a Lipschitz continuous function $b\colon [0,T] \times \R \to \R$, we define the set of \emph{admissible relaxed controls} as the set $E^{w}$ of  tuples $\rho:=(\Omega,\mathcal F, \mathbb F, \mathbb P, W , \lambda, v, X)$ such that  
\begin{enumerate}
    \item $W=(W_t)_{t\in [0,T]}$ is a Brownian motion on the filtered probability space $(\Omega,\mathcal F, \mathbb F, \mathbb P)$, satisfying the usual conditions,
    \item $\lambda$ is a $\mathcal P (A)$-valued, progressively measurable process,
    \item the couple $(v,X)$ is a solution to the controlled reflected SDE in the domain $(0,M)$:
    \begin{equation}\label{eq SDE reflected controlled} 
    \begin{cases}  
    dX_t = \big( b(t,X_t)  + \begin{matrix} \int_A a \lambda_t (da) \end{matrix} \big) dt  + \sigma d W_t + dv_t, \ t \in [0,T], \quad X_0 = x_0, \\ 
    X_t \in [0,M], \quad 
    \begin{matrix} \int_0^t \mathds 1 _{\{ X_s \in (0,M) \}} d |v|_s \end{matrix}=0,  \ \text{for any } t\in [0,T],\ \mathbb{P}\text{-a.s.}, \\
    \end{cases} 
    \end{equation} 
\end{enumerate}
where $|v|$ denotes the total variation of $v$.
Moreover, we define the set of \emph{admissible strict controls} $E^{w,s}$ as the set of elements $\rho:=(\Omega,\mathcal F, \mathbb F, \mathbb P, W, \lambda, v,X) \in E^{w}$ such that $\lambda_t = \delta_{\alpha_t}$ $\mathbb P \otimes dt$-a.e.\ in $\Omega \times [0,T]$, for some $A$-valued progressively measurable process $\alpha$.
 
We consider functions $f, \, g$, and $c$ as in the beginning of Section \ref{section singular controls} satisfying Assumption \ref{ass.submodularity.continuos.time setting}, and a lower semicontinuous function
$l \colon [0,T] \times \mathbb{R} \times \R \to  [0,\infty)$, which is convex in $a$.
For $\mu \in L_{\text{\tiny{$M$}}}$ and $\rho=(\Omega,\mathcal F, \mathbb F, \mathbb P, W, \lambda, v,X) \in E^{w}$, we define the cost functional
\begin{equation*}
J(\rho,\mu):= \mathbb E ^{\mathbb P} \bigg[ \int_0^T \Big( f(t,X_t,\mu_t) + \int_A l(t, X_t,a) \lambda_t(da) \Big) dt + g(X_T,\mu_T) + \int_0^T c_t d|v|_t \bigg].
\end{equation*}
We say that $\rho \in E^{w} $ is an \emph{optimal singular control} for the flow of measures $\mu$ if $ {J}(\rho,\mu) =\inf_{E^{w}} {J}(\cdot,\mu)$.

We are interested in the following notion of equilibrium.
\begin{definition}\label{definition reflected mfg eq strict} A flow of probabilities ${\mu} \in L_{\text{\tiny{$M$}}}$ is a MFG equilibrium if
\begin{enumerate} 
    \item there exists a strict optimal control $\rho=(\Omega,\mathcal F, \mathbb F, \mathbb P, W, \lambda, v,X) \in E^{w,s} $  for $\mu$,
    \item ${\mu}_t= \mathbb{P} \circ (X_t)^{-1}$ for any $t\in [0,T]$.  
\end{enumerate}
\end{definition} 

\subsection{Reformulation via control rules and preliminary results}
In order to have a topology on the space of admissible controls, we reformulate the problem in terms of control rules. 

Introduce the canonical space $(\Omega, \mathcal{F})$, where
\begin{align*}
\Omega := \mathcal C  \times \Lambda \times \mathcal{V} \times \mathcal{D}, \quad
\mathcal{F} := \mathcal{B}(\mathcal{C})  \otimes \mathcal{B}({\Lambda}) \otimes \mathcal{B}(\mathcal{V}) \otimes \mathcal{B}(\mathcal{D}).
\end{align*}
Define the set of relaxed control rules 
$$
E:= \{ \nu^\rho\, |\,  \rho \in E^{w} \} \quad \text{with}\quad \nu^\rho:= \mathbb P \circ (W,\lambda, v, X)^{-1}, \text{ for } \rho = ({\Omega}, {\mathcal{F}}, {\mathbb{F}}, {\mathbb{P}}, W,\lambda, v,  X) \in E^{w},
$$
and, with a slight abuse of notation, we set $J(\nu^\rho,\mu):= J(\rho, \mu)$.
The set of strict control rules is defined as $E^s:= \{ \nu^\rho\, |\,  \rho \in E^{w,s} \}$.
In this way, $E$ is naturally defined as a subspace of the topological space $\mathbb P (\Omega)$.
For any $\rho = ({\Omega}, {\mathcal{F}}, {\mathbb{F}}, {\mathbb{P}}, W,\lambda, v,  X) \in E^{w}$, the controlled SDE \eqref{eq SDE reflected controlled} together with $X_t \in [0,M]$ implies the estimate 
\begin{equation}\label{eq reflected a priori estimate}
\mathbb E ^{\mathbb P}[ |v|_T^p] \leq K < \infty
\end{equation} 
with a constant $K>0$. 
Moreover, since $A$ is compact, so are $\Lambda$ and $\mathcal P (\Lambda)$. 
This, together with \eqref{eq reflected a priori estimate}, allows to use the Meyer-Zheng tightness criteria (see Theorem 4 on p.\ 360 in \cite{MZ}) to show that the set $E$ is a relatively compact subset of $\mathcal P (\Omega)$.  
Moreover, the projection map  
$$
p\colon E \to L_{\text{\tiny{$M$}}}\quad \text{with} \quad p(\nu^\rho):=\mathbb P \circ (X^\rho)^{-1}, \text{ for } \rho= ({\Omega}, {\mathcal{F}}, {\mathbb{F}}, {\mathbb{P}}, W, \lambda, v, X) \in E^w,   
$$
satisfies the conditions in Assumption \ref{assumption.bestresponsemap.nonempty} and Assumption \ref{assumption.general.approximation}.

\begin{lemma}
\label{lemma reflected existence optimal controls} \ 
\begin{enumerate}
    \item\label{lemma reflected existence optimal controls 1} For any ${\mu} \in L_{\text{\tiny{$M$}}}$, the set $ \argmin_{E} J(\cdot, \mu)$ is nonempty. 
    \item\label{lemma reflected existence optimal controls 3} If $\rho = ({\Omega}, {\mathcal{F}}, {\mathbb{F}}, {\mathbb{P}}, W,  X,\lambda, v) \in E^{w}$, there exists a control $\hat{\rho} = ({\Omega}, {\mathcal{F}}, {\mathbb{F}}, {\mathbb{P}}, W,  X, \hat \lambda, v) \in E^{w,s}$ such that $J(\hat{\rho}, \mu) \leq J(\rho, \mu)$, for any ${\mu} \in L_{\text{\tiny{$M$}}}$. 
\end{enumerate} 
\end{lemma}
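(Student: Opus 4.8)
The plan is to prove part~\ref{lemma reflected existence optimal controls 1} by the direct method, exploiting the relative compactness of $E$ in $\mathcal{P}(\Omega)$ just obtained from the Meyer-Zheng criteria, and to prove part~\ref{lemma reflected existence optimal controls 3} by a barycenter (mean-control) construction combined with the convexity of $l$ in the control variable. The two parts are logically independent; only part~\ref{lemma reflected existence optimal controls 1} is genuinely delicate.

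For part~\ref{lemma reflected existence optimal controls 1}, fix $\mu \in L_{\text{\tiny{$M$}}}$ and pick a minimizing sequence $(\nu_n)_n \subset E$ with $J(\nu_n,\mu) \to \inf_E J(\cdot,\mu)$; the infimum is finite since the state is confined to $[0,M]$, so that $f$ and $g$ are bounded by Assumption~\ref{ass.submodularity.continuos.time setting}, $l\geq 0$, and $\mathbb{E}^{\mathbb P}\big[\int_0^T c_t\, d|v|_t\big]$ is controlled by \eqref{eq reflected a priori estimate}. By relative compactness we extract a (non-relabeled) subsequence with $\nu_n \to \nu^*$ weakly in $\mathcal{P}(\Omega)$. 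The crucial step is to show that the limit $\nu^*$ is again an admissible relaxed control rule, i.e.\ that under $\nu^*$ the canonical coordinates $(W,\lambda,v,X)$ still solve the reflected controlled SDE \eqref{eq SDE reflected controlled}. This is done through a martingale-problem characterization of $E$: for test functions $\phi \in C^2([0,M])$ one verifies that the process obtained by subtracting from $\phi(X_t)$ the integrated generator acting along the relaxed control and the reflection contribution $\int_{[0,t]}\phi'(X_s)\,dv_s$ is a martingale under each $\nu_n$, and one passes to the limit using the continuity of $b$, the compactness of $\mathcal{P}(A)$ (which controls the relaxed-control coordinate), and the stability of the Skorokhod reflection map. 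Having established $\nu^* \in E$, lower semicontinuity of $J(\cdot,\mu)$ gives $J(\nu^*,\mu) \le \liminf_n J(\nu_n,\mu) = \inf_E J(\cdot,\mu)$, so $\nu^* \in \argmin_E J(\cdot,\mu)$. Here lower semicontinuity uses that $f(t,\cdot,\mu_t)$ and $g(\cdot,\mu_T)$ are lower semicontinuous, that $a\mapsto l(t,x,a)$ is convex (so the relaxed running cost is lower semicontinuous under weak convergence of the controls), and that $v\mapsto\int_0^T c_t\,d|v|_t$ is lower semicontinuous for the pseudopath topology, where the positivity and continuity of $c$ enter.

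For part~\ref{lemma reflected existence optimal controls 3}, given $\rho = (\Omega,\mathcal F,\mathbb F,\mathbb P,W,X,\lambda,v)\in E^{w}$ define the progressively measurable process $\hat\alpha_t := \int_A a\,\lambda_t(da)$ and set $\hat\lambda_t := \delta_{\hat\alpha_t}$. Since $A$ is a compact interval, $\hat\alpha_t \in A$, and $\hat\lambda$ is a progressively measurable $\mathcal{P}(A)$-valued process, so $\hat\rho := (\Omega,\mathcal F,\mathbb F,\mathbb P,W,X,\hat\lambda,v)$ is a candidate strict control. The key point is that the drift in \eqref{eq SDE reflected controlled} depends on the control only through the barycenter $\int_A a\,\lambda_t(da) = \hat\alpha_t = \int_A a\,\hat\lambda_t(da)$, which is unchanged; hence $(X,v)$ solves the very same reflected SDE under $\hat\rho$ and the state and reflection processes are left untouched, so $\hat\rho\in E^{w,s}$. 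Consequently the $f$-, $g$-, and $\int_0^T c_t\,d|v|_t$-terms of $J$ coincide for $\rho$ and $\hat\rho$, and only the relaxed running cost changes; by Jensen's inequality and the convexity of $l$ in its last argument,
\[
l(t,X_t,\hat\alpha_t) = l\Big(t,X_t,\int_A a\,\lambda_t(da)\Big) \le \int_A l(t,X_t,a)\,\lambda_t(da)
\]
pointwise in $(t,\omega)$, whence $J(\hat\rho,\mu)\le J(\rho,\mu)$ for every $\mu\in L_{\text{\tiny{$M$}}}$.

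The main obstacle is the closedness assertion in part~\ref{lemma reflected existence optimal controls 1}: because the Meyer-Zheng pseudopath topology is weak, pathwise identities cannot be read off directly, and the reflected dynamics together with the minimality condition $\int_0^t \mathds{1}_{\{X_s\in(0,M)\}}\,d|v|_s = 0$ must be recovered in the limit via the martingale-problem formulation and the stability of the Skorokhod map, following the arguments of \cite{bayraktar.budhiraja.cohen.2019.AAP}. The lower semicontinuity of the total-variation cost in this topology is the other delicate point.
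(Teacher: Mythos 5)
Your proposal is correct in substance, and part~\ref{lemma reflected existence optimal controls 3} is exactly the paper's argument: the drift in \eqref{eq SDE reflected controlled} sees the relaxed control only through its barycenter, so replacing $\lambda_t$ by $\delta_{\hat\alpha_t}$ leaves $(X,v)$ untouched, and Jensen's inequality applied to the convex map $a\mapsto l(t,x,a)$ gives $J(\hat\rho,\mu)\le J(\rho,\mu)$. For part~\ref{lemma reflected existence optimal controls 1} you follow the same direct-method skeleton (minimizing sequence, Meyer--Zheng relative compactness, identification of the limit, lower semicontinuity of $J$), but you identify the limit differently: you propose a martingale-problem characterization of $E$, testing against $\phi\in C^2([0,M])$ and passing to the limit in the compensated process including the reflection term $\int_{[0,t]}\phi'(X_s)\,dv_s$. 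The paper instead invokes a Skorokhod representation theorem for separable spaces (see \cite{d}) to realize the weakly convergent subsequence as an almost surely convergent one on a common probability space, reads off the integral equation pathwise, and then uses the Lipschitz continuity of the Skorokhod reflection map (Lemma 2.1 in \cite{bayraktar.budhiraja.cohen.2019.AAP}) to conclude that the limit pair $(v,X)$ satisfies the reflection and minimality conditions in \eqref{eq SDE reflected controlled}. The paper's route is arguably more economical here, because under mere weak convergence in the pseudopath topology the passage to the limit in the stochastic-integral-type term $\int_{[0,t]}\phi'(X_s)\,dv_s$ is precisely the delicate point that your sketch leaves unspecified; once convergence is upgraded to almost sure convergence in $\mathcal C\times\Lambda\times\mathcal V\times\mathcal D$, this issue disappears and the martingale problem becomes unnecessary. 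Two further small points: lower semicontinuity of the relaxed running cost $\int_0^T\int_A l(t,X_t,a)\,\lambda_t(da)\,dt$ needs only lower semicontinuity and nonnegativity of $l$ (plus Fatou along the a.e.\ convergent realization), not convexity, which is used only in part~\ref{lemma reflected existence optimal controls 3}; and for the term $\int_0^T c_t\,d|v|_t$ what enters is the monotonicity of $c$ (via integration by parts, $c_T|v|_T-\int_0^T|v|_t\,c_t'\,dt$ with $c'\le 0$) together with its nonnegativity, rather than positivity and continuity alone.
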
 
\begin{proof} We begin by proving Claim \ref{lemma reflected existence optimal controls 1}.
In order to do so, take a minimizing sequence $(\nu_n)_n \subset E$ (i.e., $\lim_n J(\nu_n,\mu) = \inf_{E} J(\cdot, \mu)$)
and controls $\rho_n = (\Omega^n, \mathcal{F}^n, \mathbb F ^n,  \mathbb P ^n, W^n, \lambda^n, v^n, X^n) \in E ^{w}$ with $\nu_n = \nu^{\rho_n}$.
Since the set $E \subset \mathcal P (\Omega)$ is relatively compact, we can find a limit point 
$\nu_* \in \mathcal P ( \mathcal C \times \Lambda \times \mathcal V  \times \mathcal D) $ and a subsequence (not relabelled) such that $\nu_n \to \nu_*$ weakly.  
Up to using a Skorokhod representation theorem for separable spaces (see Theorem 3 in \cite{d}), we can assume that there exists a common probability space $(\Omega, \mathcal{F}, \mathbb P)$, on which the processes $(W^n,\lambda^n, v^n, X^n)$ are defined together with a process $(W, \lambda, v , X)$, such that 
\begin{align}\label{eq reflected Skorokhod convergence} 
& (W^n , \lambda^n, v^n, X^n) \to (W, X,\lambda, v),\; \P\text{-a.s., in $\mathcal C  \times \Lambda \times \mathcal V \times \mathcal D$ as $n\to \infty$,} \\ \notag
& \text{$\nu_n = \mathbb P \circ (W^n,\lambda^n, v^n, X^n)^{-1}$ and $\mathbb P \circ (W, \lambda, v, X)^{-1} = \nu_*$.} 
\end{align}
Also, this convergence allows  to show that $X$ is a solution to the SDE $X_t = x_0 + \int_0^t \big( b(s,X_s) + \int_A  a \lambda_s(da) \big) ds + \sigma W_t + v_t$, $t \in [0,T]$, $\mathbb P$-a.s.
Moreover, by the Lipschitz continuity of the Skorokhod map (see Lemma 2.1 in \cite{bayraktar.budhiraja.cohen.2019.AAP}), we see that the couple $(v,X)$ solves the controlled reflected SDE \eqref{eq SDE reflected controlled}.
Therefore, defining $\rho_* = (\Omega, \mathcal{F}, \mathbb F ,  \mathbb P ,  W, \lambda, v, X)$ with $\mathbb F$ being the (extended) filtration generated by $(W, \lambda, v,X)$, we have that $\rho_* \in E^{w}$ and $\nu_*:= \nu^{\rho_*}$.  
Moreover, using the convergence in \eqref{eq reflected Skorokhod convergence} and exploiting the lower semicontinuity of the costs $f,g,l$ and the fact that $c$ is nondecreasing, by Fatou's lemma we obtain that
$$
J(\nu_*, \mu) \leq \liminf_n J(\nu_n,\mu) =\inf_{E} J(\cdot, \mu),
$$
which completes the proof of Claim \ref{lemma reflected existence optimal controls 1}.

We conclude by proving Claim \ref{lemma reflected existence optimal controls 3}. 
Take $\rho = (\Omega, \mathcal{F}, \mathbb F ,  \mathbb P , W, X, \lambda, v)\in E^{w}$, set $\alpha_t := \int_A a \lambda_t(da)$, $\hat{\lambda} := \delta_{\alpha_t} (da) dt$, and consider the control $\hat \rho = (\Omega, \mathcal{F}, \mathbb F ,  \mathbb P , W, X, \hat \lambda, v) \in E^{w,s}$.  
First of all, we see that, $\P$-a.s., $X$ solves the equation 
$$
X_t = x_0 + \int_0^t ( b(s,X_s) + \alpha_s) ds + \sigma W_t + v_t, \quad t \in [0,T].
$$
Finally, by convexity of $l$ we can use Jensen's inequality obtaining
\begin{align*}
J(\hat \rho, \mu) &=\mathbb E ^{\mathbb P} \bigg[ \int_0^T ( f(t,X_t,\mu_t) +  l (t, X_t,\alpha_t) ) dt + g(X_T,\mu_T) + \int_0^T c_t d|v|_t \bigg] \\
& \leq \mathbb E ^{\mathbb P} \bigg[ \int_0^T \bigg( f(t,X_t,\mu_t) + \int_A l (t, X_t,a) \lambda_t (da) \bigg) dt + g(X_T,\mu_T) + \int_0^T c_t d|v|_t \bigg]  \\
& = J(\rho, \mu),
\end{align*}
which completes the proof of the lemma.
\end{proof} 

\subsection{Existence and approximation of equilibria}
We begin by observing that a \emph{relaxed MFG equilibrium} can now be seen as a fixed point of the best-response-map 
\begin{equation}\label{eq reflected MFG equilibrium relaxed}
 R\colon L_{\text{\tiny{$M$}}} \to L_{\text{\tiny{$M$}}}\quad\text{with} \quad R(\mu):=p(\begin{matrix} \argmin_E \end{matrix} J(\cdot, \mu)), \text{ for } \mu \in L_{\text{\tiny{$M$}}}.
\end{equation}

We move on by constructing operations $\land^{\text{\tiny{$E$}}}, \lor^{\text{\tiny{$E$}}} \colon E \times E \to E$ satisfying Assumption \ref{assumption}.  
For $\nu= \nu^\rho, \bar \nu= \nu^{\bar \rho} \in E$ with $\rho = (\Omega,\mathcal F, \mathbb F, \mathbb P, W,\lambda ,v,X), \, \bar{\rho}=(\bar \Omega,\bar{ \mathcal F}, \bar {\mathbb F}, \bar {\mathbb P}, \bar W,\bar{\lambda},\bar{v},\bar{X}) \in E^w$, 
we can, without loss of generality (see, e.g., the proof of Lemma 3.4 in \cite{dianetti.ferrari.fischer.nendel.2019}), assume these controls to be defined on the same stochastic basis; that is,  $(\Omega,\mathcal F, \mathbb F, \mathbb P, W)=(\bar \Omega,\bar{ \mathcal F}, \bar {\mathbb F}, \bar {\mathbb P}, \bar W)$.
Hence, define 
$$
\rho^\land :=(\Omega,\mathcal F, \mathbb F, \mathbb P,W,\alpha^ \land ,v^ \land , X \land \bar{X}) 
\quad \text{and} \quad 
\rho^\lor :=(\Omega,\mathcal F, \mathbb F, \mathbb P,W, \alpha^ \lor, v^ \lor , X \lor \bar{X}),
$$ 
where 
\begin{equation*}    
\begin{matrix} 
& \lambda_t^\lambda := \lambda_t \mathds{1}_{ \{ X_t \leq \bar{X}_t \}} + \bar{\lambda}_t \mathds{1}_{ \{ X_t > \bar{X}_t \}}, 
\quad      
& v_t^\land := \int_0^t \big( \mathds{1}_{ \{X_s < \bar{X}_s \} } dv_s + \mathds{1}_{ \{X_s \geq \bar{X}_s \} }d \bar{v}_s \big),\\ 
& \lambda_t^\lor := \bar{\lambda}_t \mathds{1}_{ \{ X_t \leq \bar{X}_t \}} + {\lambda}_t \mathds{1}_{ \{ X_t > \bar{X}_t \}},  \quad
& v_t^\lor := \int_0^t \big( \mathds{1}_{ \{X_s < \bar{X}_s \} } d\bar{v}_s + \mathds{1}_{ \{X_s \geq \bar{X}_s \} }d{v}_s \big). 
\end{matrix}    
\end{equation*} 
Indeed, by the Meyer-It\^o formula for continuous semimartingales (see, e.g., Theorem 68 on p.\ 213 in \cite{Protter05}), we find  
\begin{align*}
X_t \land \bar{X}_t &= X_t + 0 \land ( \bar{X}_t - X_t ) \\
& = X_t + \int_0^t  \mathds{1}_{ \{\bar{X}_s -X_s \leq 0 \}} d(\bar{X} -X)_t  - \frac{1}{2}L_t^0(\bar{X} -X)\\ 
& = x_0 + \sigma W_t  \\
&\quad  +  \int_0^t \Big( \mathds{1}_{ \{X_s <\bar{X}_s \} } \Big( b(s,X_s) + \int_A a \lambda_s(da)\Big)  + \mathds{1}_{ \{X_s \geq \bar{X}_s \} } \Big( b(s,\bar{X}_s) + \int_A a  \bar \lambda _s(da)\Big) \Big) ds  \\ 
& \quad + \int_0^t \Big( \mathds{1}_{ \{X_s < \bar{X}_s \} } dv_s + \mathds{1}_{ \{X_s \geq \bar{X}_s \} }d \bar{v}_s \Big) 
- \frac{1}{2}L_t^0(\bar{X} -X),
\end{align*}
where $L_t^0(\bar{X} -X)$ is the local time of $\bar{X}-X$ at 0 (see, e.g., Chapter IV in \cite{Protter05}).
We denote by $[\bar{X} - X,\bar{X} - X ]$ the quadratic variation of the process $\bar X  -X$ (see, e.g., p. 66 in \cite{Protter05}). 
Since $\bar{X} - X$ is a process of bounded variation, we have $[\bar{X} - X,\bar{X} - X ]=0$.
Therefore, using the characterization of local times (see, e.g., Corollary 3 on p.\ 225 in \cite{Protter05}), we obtain that
$$
L_t^0 = \lim_{\varepsilon \to 0} \frac{1}{\varepsilon} \int_0^t \mathds{1}_{\{ 0 \leq \bar{X}_s - X_s \leq \varepsilon \}} d[\bar{X} - X,\bar{X} - X ]_s = 0,
$$
and conclude that
$$
X_t \land \bar{X}_t = x_0 + \int_0^t \Big( b(s,X_s\land \bar{X}_s) + \int_A a \lambda_s^\land(da)\Big)  ds + \sigma W_t  + v_t^\land.
$$
In the same way, the process $X \lor \bar{X}$ solves the SDE controlled by $\lambda ^\lor$ with reflection $v^\lor$.  Finally, $X_t \land \bar{X}_t, \, X_t \lor \bar{X}_t \in [0,M]$, and it can be easily verified that the support of the random measures $|v^\land|$ or \ $|v^\lor|$ is contained in the set of times at which $ X_t \land \bar{X}_t \in \{0,M\}$ or $ X_t \lor \bar{X}_t \in \{ 0,M \}$, respectively. 
This proves that $\rho^ \land , \, \rho^ \lor  \in E$, so that, defining $\nu \land^{\text{\tiny{$E$}}} \bar{\nu}:= \nu^{\rho^\land}, \, \nu \lor^{\text{\tiny{$E$}}} \bar \nu := \nu^{\rho^\lor}$, we have $\nu \land^{\text{\tiny{$E$}}} \bar{\nu}, \, \nu \lor^{\text{\tiny{$E$}}} \bar \nu \in E$.     

Moreover, one readily verifies that $|v^\land|_t + |v^\lor|_t \leq |v|_t + | \bar{v}|_t$. 
This together with the fact that $c'\leq 0$, in turn, yields the estimate
\begin{align*}
J(\nu \lor^{\text{\tiny{$E$}}} \bar{\nu} , \bar{\mu}) - J(\bar{\nu} , \bar{\mu}) \leq J(\nu \lor^{\text{\tiny{$E$}}} \bar{\nu} , {\mu}) - J( \bar{\nu}, {\mu}) \leq J(\nu , {\mu}) - J( \nu \land^{\text{\tiny{$E$}}} \bar{\nu} , {\mu}).  
\end{align*}
Hence, Assumption \ref{assumption} is satisfied. 

We can now state the main result of this section.
\begin{theorem}\label{theorem reflecting} The set of mean field game equilibria $\mathcal M$ is a non empty with $\inf \mathcal M \in \mathcal M$ and $\sup \mathcal M \in \mathcal M$. 
 Moreover, If $f$ and $g$ are continuous in $(x,\mu)$, then
\begin{enumerate}
 \item the learning procedure $\underline{\mu}^n$ defined inductively by $\underline{\mu}^0=\inf L$ and $\underline{\mu}^{n+1}=\inf R(\underline{\mu}^n)$ is nondecreasing in $L$ and it converges to the minimum MFG solution,
 \item the learning procedure $\overline{\mu}^n$ defined inductively by $\overline{\mu}^0=\sup L$ and $\overline{\mu}^{n+1}=\sup R(\overline{\mu}^n)$ is nonincreasing in $L$ and it converges to the maximal MFG solution.  
 \end{enumerate} 
\end{theorem}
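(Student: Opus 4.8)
The plan is to recognize that almost all the structural work has already been done, and that the statement follows by assembling those pieces into an application of Theorem \ref{theorem.main.general}, after bridging the gap between relaxed and strict equilibria. First I would collect the established facts: $(L_{\text{\tiny{$M$}}}, \leq^{\text{\tiny{$L$}}})$ is a complete and Dedekind super complete lattice; by the discussion preceding Lemma \ref{lemma reflected existence optimal controls}, the projection $p$ fulfils the sequential-compactness requirements of Assumptions \ref{assumption.bestresponsemap.nonempty} and \ref{assumption.general.approximation}; by Lemma \ref{lemma reflected existence optimal controls}\,\ref{lemma reflected existence optimal controls 1} the set $\argmin_E J(\cdot,\mu)$ is nonempty for every $\mu \in L_{\text{\tiny{$M$}}}$ with $J(\cdot,\mu)$ lower semicontinuous; and the operations $\land^{\text{\tiny{$E$}}}, \lor^{\text{\tiny{$E$}}}$ built above through the Meyer-It\^o formula were shown to satisfy Assumption \ref{assumption}. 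Hence Assumptions \ref{assumption.bestresponsemap.nonempty} and \ref{assumption} hold for the relaxed best-response map $R$ of \eqref{eq reflected MFG equilibrium relaxed}, and part a) of Theorem \ref{theorem.main.general} applies: the set of fixed points of $R$ is nonempty and contains its infimum and its supremum.

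The conceptual step is to pass from fixed points of $R$ to equilibria in the sense of Definition \ref{definition reflected mfg eq strict}, which demand a \emph{strict} optimal control. For this I would invoke Lemma \ref{lemma reflected existence optimal controls}\,\ref{lemma reflected existence optimal controls 3}: if $\mu$ is a fixed point of $R$, realised by an optimal relaxed control $\rho \in \argmin_{E^w} J(\cdot,\mu)$ with $p(\nu^\rho) = \mu$, then the associated strict control $\hat\rho$ produced by that lemma satisfies $J(\hat\rho,\mu) \leq J(\rho,\mu) = \inf_{E^w} J(\cdot,\mu)$ and is therefore itself optimal. Crucially, $\hat\rho$ is driven by the averaged action $\alpha_t = \int_A a\,\lambda_t(da)$ and carries the \emph{same} state process $X$, so $p(\nu^{\hat\rho}) = \mu$ as well. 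Thus every fixed point of $R$ admits a strict optimal control with marginal flow $\mu$, giving Definition \ref{definition reflected mfg eq strict}; the set $\mathcal M$ of strict equilibria coincides with the set of fixed points of $R$, and the assertions $\inf \mathcal M \in \mathcal M$, $\sup \mathcal M \in \mathcal M$ are exactly Theorem \ref{theorem.main.general} a).

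For the two learning procedures I would additionally appeal to parts b) and c) of Theorem \ref{theorem.main.general}, which require Assumption \ref{assumption.general.approximation}. The convergence conditions on $p$ are already recorded, so what remains is the continuity conditions (1) and (2) on $J(\nu,\cdot)$ along monotone flows, and this is precisely where the hypothesis that $f$ and $g$ are continuous in $(x,\mu)$ enters: along a monotone sequence $\mu^n \to \mu$ in $L_{\text{\tiny{$M$}}}$ one passes to the limit inside the expectation defining $J$, using the growth bounds of Assumption \ref{ass.submodularity.continuos.time setting} and the a priori estimate \eqref{eq reflected a priori estimate} to justify dominated convergence for the continuity and Fatou's lemma (together with $c \geq 0$ and the lower semicontinuity of $f,g,l$) for the liminf inequality. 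With Assumption \ref{assumption.general.approximation} in force, Theorem \ref{theorem.main.general} b) and c) yield $\underline\mu^n \uparrow \inf \mathcal M$ and $\overline\mu^n \downarrow \sup \mathcal M$.

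The main obstacle is not the abstract application but the verification of condition (2) of Assumption \ref{assumption.general.approximation}, i.e.\ the joint lower semicontinuity $J(\nu,\sup_n\mu^n) \leq \liminf_n J(\nu^n,\mu^n)$ along Skorokhod-representation limits in the Meyer-Zheng topology on controls; one must combine the pseudopath convergence of the trajectories with the stochastic-order continuity of $f,g$ in $\mu$ and check that the reflection cost $\int_0^T c_t\,d|v|_t$ is lower semicontinuous along such limits, which is where the sign conditions on $c$ and the Lipschitz property of the Skorokhod map (as used in Lemma \ref{lemma reflected existence optimal controls}) are needed.
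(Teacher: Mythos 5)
Your proposal is correct and follows essentially the same route as the paper's (much terser) proof: apply Theorem \ref{theorem.main.general} to the relaxed best-response map of \eqref{eq reflected MFG equilibrium relaxed} using the verified Assumptions \ref{assumption.bestresponsemap.nonempty}, \ref{assumption}, and \ref{assumption.general.approximation}, then pass from relaxed fixed points to strict equilibria via Lemma \ref{lemma reflected existence optimal controls}, observing that the strict control produced there carries the same state process and hence the same marginal flow.
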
 
\begin{proof}
For relaxed MFG equilibria as in \eqref{eq reflected MFG equilibrium relaxed}, the result follows from the general Theorem \ref{theorem.main.general}. Thanks to Lemma \ref{lemma reflected existence optimal controls}, this allows to obtain the result for MFG equilibria as in Definition \ref{definition reflected mfg eq strict}. 
\end{proof}


\section{Supermodular mean field games with optimal stopping}
\label{section OS}

In this section we adapt the general results of Section \ref{section general case} to a MFG, where the representative agent faces an optimal stopping maximization problem. 
In particular, we introduce and solve a version of the model discussed in \cite{bouveret.dumitrescu.tankov.20} to which we add a common noise (see Example \ref{os example MFGs Tankov} for details).
Our formulation also includes a particular case of the model studied in \cite{carmona.delarue.lacker.2017.timing} (see Example \ref{os example MFGs timing}, below).

\subsection{Formulation of the model} 
Let $(\Omega, \mathcal F, \mathbb F, \mathbb P)$ be a filtered probability space, satisfying the usual conditions. 
For $0<T<\infty$, let $\mathcal{T}$ denote the set of $\mathbb{F}$-stopping times satisfying $\tau \leq T$ $\mathbb{P}$-a.s. 
Let $Z=(Z)_{t \in [0, T]}$ and $B=(B)_{t \in [0, T]}$ be progressively measurable stochastic processes, taking values in $\R^{d_1}$ and $\R^{d_2}$ for $d_1, d_2 \in \N$, respectively. Set $X:=(Z,B)$ and $d:=d_1 + d_2$.
Assume that the process $B$ has independent increments, and denote by $\mathbb F^{\text{\tiny{$B$}}}$ the right-continuous extension of the filtration generated by $B$, augmented by the $\P$-null sets. 
The process $B$ represents a common noise, and it can also be deterministic (in this case, $\mathbb F^{\text{\tiny{$B$}}}$ is the trivial filtration).
For any $t \in [0,T]$, denote by $\mathcal{F}_{t,T}^{\text{\tiny{$B$}}}$ the $\sigma$-field generated by the family of increments $ \{ B_{s_2}^i-B_{s_1}^i \, | \, t \leq s_1\leq s_1 \leq T, \, i=1,...,d_2 \}$. 
We assume that, for any $t \in [0,T]$, the $\sigma$-fields $\mathcal{F}_t$ and $\mathcal{F}_{t,T}^{\text{\tiny{$B$}}}$ are independent.

Denote by $L^{\text{\tiny{pr.}}}(\Omega \times [0,T];\mathcal{M}_{\leq 1}(\R))$ the set
of all processes taking values in the set of sub-probability measures $\mathcal{M}_{\leq 1}(\R)$, which are $\mathbb F^{\text{\tiny{$B$}}}$-progressively measurable.\
Consider two measurable functions 
\begin{align*}
    f\colon [0,T] \times \mathbb R^d \times \mathcal{M}_{\leq 1}(\R) \to \mathbb R \quad\text{and}\quad 
    g \colon [0,T] \times \mathbb R^d \to \mathbb R.
\end{align*} 
Next, for $m \in L^{\text{\tiny{pr.}}}(\Omega \times [0,T];\mathcal{M}_{\leq 1}(\R))$, we define the profit functional
\begin{equation}\label{os eq cost functional}
J(\tau,m):=\mathbb{E}\bigg[ \int_0^{\tau} f(t,X_t,m_t) dt + g(\tau , X_{\tau }) \bigg], \quad \tau \in \mathcal{T},
\end{equation}  
and consider the optimal stopping problem, parametrized by $m$, which consists of maximizing the profit functional $J(\cdot,m)$.  
For a process $m$, we say that the stopping time $\tau^m$ is optimal for $m$ if $\tau^m \in \argmax_{\mathcal{T}} J(\cdot, m) $.  

We next consider a continuous function $\psi\colon \R^d \to \R$ and the following notion of solution.
\begin{definition}\label{os MFG equilibrium}
A process $m \in L^{\text{\tiny{pr.}}}(\Omega \times[0,T];\mathcal{M}_{\leq 1}(\R))$ is a MFG equilibrium if 
$$ 
m_t(A)= \mathbb{P} [\psi(X_t) \in A, t < \tau^m | \mathcal{F}_t^{\text{\tiny{$B$}}}], \quad \text{for all } A \in \mathcal{B}(\mathbb{R}), \ t \in [0,T], \ \P \text{-a.s.,}
$$ 
for some $\tau^m \in \argmax_{\mathcal{T}} J(\cdot, m)$. 
\end{definition}

\subsection{Reformulation and preliminary results} 
In order to prove existence and approximation of the equilibria of the MFG, we embed the problem in terms of the general formulation of Section \ref{section general case}. 

Consider the set $E:=\mathcal{T}$, endowed with the lattice structure $\land, \lor$ arising from the order relation $\leq$ given by the $\P$-a.s.\ pointwise order ($\tau \leq \bar  \tau$ if and only if $\tau \leq  \bar{\tau}$ $\mathbb{P}$-a.s.). 
The lattice $E$ is complete, so that it is compact in the interval topology, see Appendix \ref{append.lattice}.
The lattice structure on $E$ allows us to directly use some of the results in \cite{Vives90} (see, in particular, Remark \ref{remark stopping Topkis}, below).  

For $\tau \in E$, we define the $\mathcal M _{\leq 1} (\R)$-valued process $p\tau$ by setting, $\P$-a.s.,
\begin{equation}\label{eq optimal stopp projection}
 (p\tau)_t(A) := \mathbb{P}[\psi(X_t) \in A, t < \tau | \mathcal{F}_t^{\text{\tiny{$B$}}} ], \quad \text{for all $A \in \mathcal{B}(\mathbb{R})$ and $t\in [0,T].$}
\end{equation}
Note that $(p\tau)_t(y,\infty) \leq \mathbb{P}[\psi(X_t) >y \,| \mathcal{F}_t^{\text{\tiny{$B$}}}]=: \mu_t^{\psi}(y,\infty)$ $\P \text{-a.s.}$, for $y\in \mathbb R$, so that 
\begin{equation}\label{eq optimal stopp projection in interval}
    (p\tau)_t \leq_{\text{\tiny{s.t.}}} \mu_t^{\psi},\; \P\text{-a.s.},\quad \text{for each $t\in [0,T]$.}
\end{equation} 
For $ m, \, \bar{m} \in  L^{\text{\tiny{pr.}}}(\Omega \times [0,T]; \mathcal{M}_{\leq 1}(\mathbb{R}))$, we define the order relation
$$
m \leq^{\text{\tiny{$L$}}} \bar{m} \iff m_t \leq^{\text{\tiny{s.t.}}} \bar{m}_t \ \mathbb P\text{-a.s., for $dt$-a.a. $t\in [0,T]$}, 
$$ 
and introduce the set of feasible distributions as
$$
L:= \{ m \in L^{\text{\tiny{pr.}}}(\Omega \times [0,T]; \mathcal{M}_{\leq 1}(\mathbb{R})) \, | \,  m \leq^{\text{\tiny{$L$}}} \mu^{\psi}  \}, 
$$
endowed with the order relation $\leq^{\text{\tiny{$L$}}}$. 
Thanks to the results in Section \ref{section lattices}, the lattice $(L,\leq^{\text{\tiny{$L$}}})$ is complete and Dedekind super complete (see in particular Example \ref{ex.lattices}).

Observe that, from the definition of $p$, we have the following monotonicity properties:
\begin{equation}\label{os eq projection omeomorfism}
p(\tau \land \bar{\tau} ) \leq^{\text{\tiny{$L$}}} p\tau \land^{\text{\tiny{$L$}}} p \bar{\tau} \leq^{\text{\tiny{$L$}}} p\tau \lor^{\text{\tiny{$L$}}} p\bar{\tau} \leq^{\text{\tiny{$L$}}} p(\tau \lor \bar{\tau}),\quad  \text{for each } \tau,\bar{\tau} \in E.
\end{equation}

The following assumption will ensure that the projection $p$ takes values in $L$ and will give the necessary integrability of the payoffs in order to gain the continuity of the functional $J$. 
\begin{assumption}\label{assumption stopping time existence} \
\begin{enumerate}
    \item\label{os continuity assumption} The processes $Z$ and $B$ are continuous;
    \item The functions $f, \, g$ are nonnegative, $g$ is continuous and 
    $$ 
    \E \bigg[ \sup_{t \in [0,T]} \Big( f(t,X_t,\mu_t^{\psi}) + g(t,X_t)\Big) \bigg] < \infty.
    $$ 
\end{enumerate}  
\end{assumption} 
 
\begin{lemma}\label{os lemma projection well defined}
The map $p\colon E \to L$ as in \eqref{eq optimal stopp projection} is well defined; i.e., $p\tau \in L$ for any $\tau \in E$.
\end{lemma}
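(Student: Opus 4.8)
The plan is to verify the two defining requirements of membership in $L$ separately: first, that for each $\tau \in E$ the object $(p\tau)_t$ is $\P$-a.s.\ a genuine element of $\mathcal{M}_{\leq 1}(\R)$ with $(p\tau)_t \leq^{\text{\tiny{s.t.}}} \mu_t^{\psi}$; and second, that the process $t\mapsto (p\tau)_t$ admits an $\mathbb{F}^{\text{\tiny{$B$}}}$-progressively measurable version. The first point is almost immediate. Since $\psi(X_t)$ is a real (hence Polish-valued) random variable, the map $A \mapsto \mathbb{P}[\psi(X_t)\in A,\, t<\tau \mid \mathcal{F}_t^{\text{\tiny{$B$}}}]$ admits a regular version which is $\P$-a.s.\ a bona fide measure on $\mathcal{B}(\R)$; its total mass equals $\mathbb{P}[t<\tau\mid \mathcal{F}_t^{\text{\tiny{$B$}}}]\leq 1$, so $(p\tau)_t\in \mathcal{M}_{\leq 1}(\R)$, and the stochastic dominance is exactly \eqref{eq optimal stopp projection in interval}. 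This already yields $p\tau \leq^{\text{\tiny{$L$}}} \mu^{\psi}$.

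The real work is the measurability, and here I would reduce to the survival-function coordinates. By the discussion in Section \ref{section lattices}, the weak topology on $\mathcal{M}_{\leq 1}$ is metrizable and weak convergence coincides with pointwise convergence of survival functions at continuity points; consequently $\mathcal{B}(\mathcal{M}_{\leq 1})$ is generated by the evaluation maps $\mu\mapsto \mu_0(y)=\mu((y,\infty))$ for $y\in\R$ (equivalently $y\in\mathbb{Q}$). Hence it suffices to produce, consistently in $y$, $\mathbb{F}^{\text{\tiny{$B$}}}$-progressively measurable versions of the real processes $t\mapsto (p\tau)_t((y,\infty)) = \mathbb{E}\big[\mathds{1}_{\{\psi(X_t)>y,\, t<\tau\}}\,\big|\, \mathcal{F}_t^{\text{\tiny{$B$}}}\big]$. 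For each rational $q$, the integrand $(t,\omega)\mapsto \mathds{1}_{\{\psi(X_t)>q,\, t<\tau\}}$ is a bounded $\mathbb{F}$-progressively measurable process: here I use Assumption \ref{assumption stopping time existence}\ref{os continuity assumption} and continuity of $\psi$, so that $(t,\omega)\mapsto \psi(X_t)$ is progressively measurable, together with the fact that $\{(t,\omega)\colon t<\tau(\omega)\}$ is a progressively measurable set since $\tau\in\mathcal{T}$. Taking the optional projection onto $\mathbb{F}^{\text{\tiny{$B$}}}$ (which satisfies the usual conditions, so that optional processes are progressively measurable) produces an $\mathbb{F}^{\text{\tiny{$B$}}}$-progressively measurable process $\tilde G^q$ with $\tilde G^q_t = (p\tau)_t((q,\infty))$ $\P$-a.s.\ for each $t$.

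Finally I would assemble these coordinates into a single measure-valued process. The monotonicity $\tilde G^q \geq \tilde G^{q'}$ for $q<q'$ holds $\P$-a.s.\ for each fixed $t$, hence $dt\otimes\P$-a.e.; intersecting the corresponding null sets over the countably many pairs of rationals, I obtain that for $dt\otimes\P$-a.e.\ $(t,\omega)$ the family $(\tilde G^q_t(\omega))_{q\in\mathbb{Q}}$ is nonincreasing with values in $[0,1]$. Setting $\hat G^y := \inf_{q\in\mathbb{Q},\,q>y}\tilde G^q$ yields, for a.e.\ $(t,\omega)$, the right-continuous nonincreasing survival function of a sub-probability measure, and $y\mapsto\hat G^y_t(\omega)$ then determines a point of $\mathcal{M}_{\leq 1}$; on the exceptional null set one sets $(p\tau)_t:=\delta_0$. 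Each $\hat G^y$ is an infimum of countably many progressively measurable processes, hence progressively measurable, so by the generation property of $\mathcal{B}(\mathcal{M}_{\leq 1})$ the resulting $\mathcal{M}_{\leq 1}$-valued process is $\mathbb{F}^{\text{\tiny{$B$}}}$-progressively measurable, and by right-continuity of survival functions it agrees $\P$-a.s.\ for each $t$ with the regular version from the first step. This shows $p\tau\in L^{\text{\tiny{pr.}}}(\Omega\times[0,T];\mathcal{M}_{\leq 1}(\R))$, and combined with the dominance we conclude $p\tau\in L$. I expect the \textbf{main obstacle} to be precisely this assembly step, namely reconciling the a.s.-defined, $t$-wise regular conditional distributions with one jointly (progressively) measurable $\mathcal{M}_{\leq 1}$-valued representative; the survival-function description of $\mathcal{M}_{\leq 1}$ recorded in Section \ref{section lattices} together with the optional projection theorem are the decisive tools that make it routine.
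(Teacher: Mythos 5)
Your proposal is correct in substance but takes a genuinely different route from the paper's. The paper first uses the independence of $\mathcal F_t$ and $\mathcal F_{t,T}^{B}$ to replace the conditioning $\sigma$-field $\mathcal F_t^{B}$ by the \emph{fixed} $\sigma$-field $\mathcal F_T^{B}$, and then shows, via dominated convergence for conditional expectations and the pathwise right-continuity of $s\mapsto\phi(\psi(X_s))\mathds 1_{\{s<\tau\}}$ guaranteed by Condition \ref{os continuity assumption} in Assumption \ref{assumption stopping time existence}, that $t\mapsto(p\tau)_t$ is $\P$-a.s.\ weakly right-continuous; progressive measurability then follows from adaptedness plus right-continuity. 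You instead work coordinate-wise on survival functions and invoke the optional projection theorem onto $\mathbb F^{B}$, then reassemble a measure-valued representative. Both routes are legitimate: the paper's is more elementary and yields as a by-product the identity $(p\tau)_t(A)=\P[\psi(X_t)\in A,\,t<\tau\,|\,\mathcal F_T^{B}]$, which is reused later in the convergence proof for the learning procedure, whereas yours relies on a heavier general theorem but needs neither the independence structure nor, in principle, anything beyond progressive measurability of the integrand. One local correction is needed in your assembly step: for a family $(\tilde G^q)_{q\in\mathbb Q}$ that is nonincreasing in $q$, the quantity $\inf_{q\in\mathbb Q,\,q>y}\tilde G^q$ collapses to the limit as $q\to+\infty$, i.e.\ to $0$. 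Since survival functions are nonincreasing and right-continuous, the correct reconstruction is $\hat G^y:=\sup_{q\in\mathbb Q,\,q>y}\tilde G^q=\lim_{q\downarrow y}\tilde G^q$, which is still a countable supremum of progressively measurable processes and agrees $\P$-a.s.\ for each $t$ with $(p\tau)_t\big((y,\infty)\big)$ by continuity of the measure from below along $(q,\infty)\uparrow(y,\infty)$. With that fix your argument goes through.
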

\begin{proof}
Take $\tau \in E$.
In light of \eqref{eq optimal stopp projection in interval}, we only need to prove that the process $p\tau$ is $\mathbb{F}^{\text{\tiny{$B$}}}$-progressively measurable. 

We first show that, for each $\tau \in \mathcal T$ and $t \in [0,T]$, $\P$-a.s.,\ we have
\begin{equation}\label{eq optimal stopp conditional expectatio T}
(p\tau)_t(A)=\mathbb{P} [\psi(X_t) \in A, t < \tau | \mathcal{F}_t^{\text{\tiny{$B$}}}]=\mathbb{P} [\psi(X_t) \in A, t < \tau | \mathcal{F}_T^{\text{\tiny{$B$}}}], \quad \text{for all } A \in \mathcal{B}(\mathbb{R}). 
\end{equation}
This can be shown similarly to Remark 1 in \cite{tchuendom}. 
Indeed, for any $t \in [0,T]$, the $\sigma$-fields $\mathcal{F}_t$ and $\mathcal{F}_{t,T}^{\text{\tiny{$B$}}}$ are independent, so that the r.v.'s $Y_t^A:=\mathds{1}_{ \{ \psi(X_t) \in A \} } \mathds{1}_{\{ t< \tau \} }, \ A \in \mathcal{B}(\mathbb{R}),$ are independent from $\mathcal{F}_{t,T}^{\text{\tiny{$B$}}}$. 
Also, by assumption the $\sigma$-fields $\mathcal{F}_t^{\text{\tiny{$B$}}}$ and $\mathcal{F}_{t,T}^{\text{\tiny{$B$}}}$ are independent. 
It thus follows that, for any $ A \in \mathcal{B}(\mathbb{R})$, one has
$$
\mathbb{P} [\psi(X_t) \in A, t < \tau | \mathcal{F}_T^{\text{\tiny{$B$}}}] = \E [Y_t^A |\mathcal{F}_t^{\text{\tiny{$B$}}} \lor \mathcal{F}_{t,T}^{\text{\tiny{$B$}}} ] = \E [Y_t^A |\mathcal{F}_t^{\text{\tiny{$B$}}}] = \mathbb{P} [\psi(X_t) \in A, t < \tau | \mathcal{F}_t^{\text{\tiny{$B$}}}]
, \ \P \text{-a.s.},  
$$
which proves \eqref{eq optimal stopp conditional expectatio T}.

We can now prove that the process $p\tau$ is right-continuous $\mathbb P$-a.s. 
Indeed, for $\phi \in \mathcal C _b (\mathbb R)$, $t \in [0,T]$, and a sequence $(s_n)_n \subset [0,T]$ converging to $t$ with $s_n \geq t$, we have 
\begin{align*}
\lim_n \int_\R \phi(y) (p\tau)_{s_n}(dy)  
& = \lim_n \E [ \phi(\psi(X_{s_n})) \mathds{1}_{ \{s_n < \tau \} } | \mathcal{F}_T^{\text{\tiny{$B$}}}] \\
& =\E [ \phi(\psi(X_{t})) \mathds{1}_{ \{t < \tau \} } | \mathcal{F}_T^{\text{\tiny{$B$}}}] \\
& = \int_\R \phi(y) (p\tau)_t(dy), \quad \P\text{-a.s.},
\end{align*}
where the convergence follows by the dominated convergence theorem for conditional expectations, and using the right-continuity of $(\phi(\psi(X_{s})) \mathds{1}_{ \{s < \tau \} })_{s\in [0,T] }$ deriving from Assumption \ref{assumption stopping time existence}. 
Therefore,  $(p\tau)_{s_n}$ weakly converges to $(p\tau)_t$, $\P$-a.s., as $n\to \infty$, proving the right-continuity of $p\tau$.

Finally, since the process $p\tau$ is $\mathbb F^{\text{\tiny{$B$}}}$-adapted and right-continuous, it is $\mathbb F^{\text{\tiny{$B$}}}$-progressively measurable, completing the proof of the lemma. 
\end{proof}

\subsection{Existence and approximation of equilibria}
We enforce the following structural condition:
\begin{assumption}\label{os ass supermodularity} For each $(t, x) \in [0,T] \times \mathbb{R}^d$, the function $f(t,x,\cdot)$ is increasing; i.e.,
$f(t,x,{m}) \leq f(t,x,\bar{m})$ for any $m, \bar{m} \in \mathcal{M}_{\leq 1}( \mathbb{R})$ with $m \leq^{\text{\tiny{s.t.}}} \bar{m}$. 
\end{assumption}
From Assumption \ref{os ass supermodularity}, for $m, \bar{m} \in L$ with $m \leq^{\text{$\tiny{L}$}} \bar{m}$ and $\tau, \bar{\tau} \in E$ we have 
\begin{align*}\label{os eq J supermod proof}
J(\bar{\tau}, \bar{m})  - J(\bar{\tau} \land \tau, \bar{m})  &= \mathbb{E} \bigg[ \int_{\bar{\tau}\land \tau}^{\bar{\tau}} f(t,X_t, \bar{m}_t)dt + g(\bar{\tau } , X_{\bar{\tau} }) - g(\bar{\tau } \land \tau , X_{\bar{\tau} \land \tau }) \bigg] \\ \notag  
&\geq \mathbb{E} \bigg[ \int_{\bar{\tau}\land \tau}^{\bar{\tau}} f(t,X_t, {m}_t)dt + g(\bar{\tau }, X_{\bar{\tau} }) - g(\bar{\tau } \land \tau , X_{\bar{\tau} \land \tau })\bigg] \\ \notag
& = \mathbb{E} \bigg[ \int_{  \tau}^{{\tau}\lor \bar{\tau}} f(t,X_t, {m}_t)dt + g({\tau}\lor \bar{\tau} , X_{{\tau}\lor \bar{\tau}}) - g(\tau , X_{\tau }) \bigg] \\ \notag
&= J({\tau} \lor \bar{\tau}, {m}) - J({\tau} , {m}), 
\end{align*}
which reads as
\begin{equation}\label{os eq J supermod}
J(\bar{\tau}, \bar{m})  - J(\bar{\tau} \land \tau, \bar{m}) 
\geq J(\bar{\tau}, {m})  - J(\bar{\tau} \land \tau, {m})
= J({\tau} \lor \bar{\tau}, {m}) - J({\tau} , {m}).
\end{equation}
\begin{remark}
It is worth observing that the first inequality in \eqref{os eq J supermod} corresponds to the fact that the functional $J\colon E \times L \to \R$ has increasing differences, while the second equality in  \eqref{os eq J supermod} implies that the functionals $J(\cdot,m)\colon L \to \R$, $m \in L$, are supermodular. 
In this case, the game is said to be supermodular,  and we refer to \cite{Vives90} for further details. 
\end{remark}

We consider the best-response-maps 
\begin{equation}\label{eq stopping best response map definitions}
\hat R (m) := \begin{matrix} \argmin_E J(\cdot,m) \end{matrix} \subset E, \quad R(m):= p(\hat R (m)) \subset L, \quad m \in L. 
\end{equation}
Combining Assumption \ref{assumption stopping time existence} and Assumption \ref{os ass supermodularity} together with the definition of $L$, we obtain that, for any $m\in L$,
\begin{equation}\label{os uniform integrability costs}
\E \bigg[ \sup_{t \in [0,T]} \Big( f(t,X_t,m_t) + g(t,X_t)\Big) \bigg] \leq \E \bigg[ \sup_{t \in [0,T]} \Big( f(t,X_t,\mu_t^{\psi}) + g(t,X_t)\Big) \bigg] < \infty.
\end{equation}
This estimate, together with Assumption \ref{assumption stopping time existence}, allows to show that the functionals $J(\cdot,m): L \to \R$, $m \in L$, are continuous in the interval topology on $E$.
Therefore, arguing as in Lemma 3.1 in \cite{Vives90}, for any $m \in L$ the set $\hat R(m)$ is nonempty so that, thanks to Lemma \ref{os lemma projection well defined}, the best reply map $R\colon L \to 2^L$ is well-defined.
Moreover, $m \in L$ is an MFG equilibrium if and only if $m \in R(m)$.

\begin{remark}\label{remark stopping Topkis}
We observe that, even if the Condition \ref{assumption.bestresponsemap.nonempty.continouty projection} in Assumption \ref{assumption.bestresponsemap.nonempty} is not satisfied, the same conclusions as in Lemma \ref{lemma.bestresponse} can be deduced as follows. 
Thanks to the lattice structure on $E$ and to the supermodularity property in \eqref{os eq J supermod}, we can employ  Lemma 3.1 in \cite{Vives90}, in order to obtain that:
\begin{enumerate}
\item\label{os directed} The set $\hat R(m)$ is a lattice, i.e. for every $\tau_1,\tau_2 \in \hat R(m)$, one has $ \tau_1 \wedge \tau_2,\, \tau_1 \vee \tau_2 \in \hat R(m)$;
 \item\label{os bestresponse increasing} For all $m,\bar{m}\in L$ with $m \leq \bar m$, $ \inf_E \hat R(m) \leq \inf_E \hat R(\bar m )$ and $\sup_E \hat R(m) \leq \sup_E \hat  R(\bar m)$;
 \item\label{os bestresponsecomplete} For every $m \in L$, $\inf_E \hat R(m) \in \hat R(m)$ and $\sup_E \hat R(m)\in \hat R(m)$.
\end{enumerate}
Therefore, due to the monotonicity of the projection $p$ (see \eqref{os eq projection omeomorfism}),  for any $m \in L$, we have
\begin{equation}\label{os R hat R}
\inf R(m) = p(\begin{matrix} \inf_E \hat R(m) \end{matrix} ) \in R(m) \quad \text{and} \quad \sup R(m) = p(\begin{matrix} \sup_E \hat R(m)  \end{matrix})\in R(m), 
\end{equation}
so that the assertions of Lemma \ref{lemma.bestresponse} hold.
\end{remark}

Now, we provide the main result of this section.
We  underline that some of the conditions in Assumption \ref{assumption.general.approximation} are not satisfied (in particular, the continuity-like property of $p$ for monotone sequences of stopping times is not satisfied).
As a consequence, we obtain a result which is less general than that of Theorem \ref{theorem.main.general} (see Remark \ref{os remark learning procedure}, below), and some of the arguments in the proof of that theorem need to be adapted in order to prove existence and approximation of MFG sulutions.
 
\begin{theorem} 
The set of MFG equilibria $M$ is nonempty with $\inf M \in M$ and $\sup M\in M$. 
Moreover, if $f$ is continuous in $m$, we have that the learning procedure ${m}^n$ defined inductively by ${m}^0=\inf L$ and ${m}^{n+1}=\inf R (m^n)$ is nondecreasing in $L$ and it converges to the minimum MFG solution.
\end{theorem}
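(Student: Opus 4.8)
For the existence assertion I would rely on Remark~\ref{remark stopping Topkis}, which guarantees that the conclusions of Lemma~\ref{lemma.bestresponse} hold for the present best-response map $R$ and projection $p$, even though Assumption~\ref{assumption.bestresponsemap.nonempty} is unavailable here. In particular, on the complete lattice $(L,\leq^{\text{\tiny{$L$}}})$ the maps $\mu \mapsto \inf R(\mu)$ and $\mu \mapsto \sup R(\mu)$ are well defined, $L$-valued and isotone (Lemma~\ref{lemma.bestresponse}b)), and their fixed points are exactly the MFG equilibria. The proof of part~a) of Theorem~\ref{theorem.main.general} then applies verbatim: Tarski's fixed point theorem yields that the set $M$ is nonempty with $\inf M \in M$ and $\sup M \in M$.

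For the learning procedure I would first check that $(m^n)_n$ is nondecreasing: $m^0 = \inf L \leq^{\text{\tiny{$L$}}} m^1$ trivially, and Lemma~\ref{lemma.bestresponse}b) propagates $m^{n-1} \leq^{\text{\tiny{$L$}}} m^n$ to $m^n = \inf R(m^{n-1}) \leq^{\text{\tiny{$L$}}} \inf R(m^n) = m^{n+1}$ by induction. By \eqref{os R hat R} I may choose $\tau^n := \inf_E \hat R(m^{n-1}) \in \hat R(m^{n-1})$ with $p\tau^n = m^n$, and part~(2) of Remark~\ref{remark stopping Topkis} then shows that the stopping times $(\tau^n)_n$ are themselves nondecreasing. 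Being an increasing sequence bounded by $T$, they admit a pointwise limit $\tau_* := \sup_n \tau^n$ with $\{\tau_* \leq t\} = \bigcap_n\{\tau^n \leq t\} \in \mathcal F_t$, so $\tau_* \in \mathcal T = E$; I also set $m_* := \sup_n m^n \in L$.

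The heart of the argument is the identification $m_* = p\tau_*$, i.e.\ precisely the continuity of $p$ that fails for general monotone sequences but is restored here because the stopping times \emph{increase}: one has $\{t < \tau^n\} \uparrow \{t < \tau_*\}$, so by \eqref{eq optimal stopp conditional expectatio T} and the conditional monotone convergence theorem the survival functions satisfy $\mathbb P[\psi(X_t) > s,\, t < \tau^n \mid \mathcal F_t^{\text{\tiny{$B$}}}] \uparrow \mathbb P[\psi(X_t) > s,\, t < \tau_* \mid \mathcal F_t^{\text{\tiny{$B$}}}]$ for every $s$ (taking $s$ rational renders the exceptional null set uniform). Since by Lemma~\ref{minimaandmaxima} the supremum of a nondecreasing sequence in $\mathcal M_{\leq 1}$ is the pointwise supremum of survival functions, this gives $(\sup_n m^n)_t = (p\tau_*)_t$ $\mathbb P$-a.s.\ for a.e.\ $t$, that is $m_* = p\tau_*$ in $L$.

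It then remains to pass to the limit in the optimality inequality $J(\tau^n, m^{n-1}) \geq J(\tau, m^{n-1})$, valid for all $\tau \in E$. On the left, I would show $J(\tau^n, m^{n-1}) \to J(\tau_*, m_*)$: the running term converges by monotone convergence, since $\mathds{1}_{\{t<\tau^n\}}\, f(t,X_t,m^{n-1}_t) \uparrow \mathds{1}_{\{t<\tau_*\}}\, f(t,X_t,(m_*)_t)$ using $f \geq 0$, the monotonicity of $f$ in $m$ (Assumption~\ref{os ass supermodularity}) and $m^{n-1} \uparrow m_*$, while the terminal term converges by dominated convergence from the continuity of $g$ and of $X$ (Assumption~\ref{assumption stopping time existence}) and the integrable bound \eqref{os uniform integrability costs}. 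On the right, continuity of $f$ in $m$ and the weak convergence $m^{n-1}_t \to (m_*)_t$ (Lemma~\ref{minimaandmaxima}), again dominated via \eqref{os uniform integrability costs}, give $J(\tau, m^{n-1}) \to J(\tau, m_*)$; hence $J(\tau_*, m_*) \geq J(\tau, m_*)$ for all $\tau$, so $\tau_* \in \hat R(m_*)$ and $m_* = p\tau_* \in R(m_*)$ is an equilibrium. Minimality follows by the induction of Theorem~\ref{theorem.main.general}b): if $m \in M$ then $m^0 \leq^{\text{\tiny{$L$}}} m$, and $m^{n-1} \leq^{\text{\tiny{$L$}}} m$ forces $m^n = \inf R(m^{n-1}) \leq^{\text{\tiny{$L$}}} \inf R(m) \leq^{\text{\tiny{$L$}}} m$, so $m_* = \inf M$. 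I expect the identification $m_* = p\tau_*$ to be the main obstacle, and it is also exactly why no decreasing/maximal counterpart is stated: for $\tau^n \downarrow \tau_*$ one only controls $\{t<\tau_*\} \subset \bigcap_n\{t<\tau^n\} \subset \{t\leq\tau_*\}$ and the analogous identity can break down.
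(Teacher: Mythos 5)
Your proposal is correct and follows essentially the same route as the paper's proof: Tarski via the monotone selections $\inf R$, $\sup R$ for existence, then the increasing sequence of stopping times $\tau^n=\inf_E \hat R(m^{n-1})$, the identification $p\tau_*=\sup_n m^n$ via $\{t<\tau^n\}\uparrow\{t<\tau_*\}$ (the paper phrases this as left-continuity of $\mathds{1}_{\{t<\cdot\}}$ plus dominated convergence for conditional expectations), and passage to the limit in the optimality inequality. Your closing observation about why the decreasing/maximal counterpart fails is exactly the content of Remark~\ref{os remark learning procedure}.
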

\begin{proof}
The existence and the lattice structure of equilibria follows by Tarski's fixed point theorem, since the maps $\inf R$ and $\sup R$ are nondecreasing, see Remark \ref{remark stopping Topkis}. 

We prove the convergence of the learning procedure $(m^n)_n$. 
Setting, for $n\geq 1$, $\tau_n := \inf_E \hat R(m^{n-1})$, by Remark \ref{remark stopping Topkis}, we have that $\tau_n \leq \tau_{n+1}$, $m^n \leq^{\text{$\tiny{L}$}} m^{n+1}$, and $m^n = p\tau_n$ for any $n\geq 1$.
By the completeness of the lattices $E$ and $L$, we can define $\tau_*:= \sup_E \{ \tau_n | n \geq 1 \}$ and $m^* := \sup_n m^n$, and  we have
\begin{equation}\label{os eq convergence to the sup}
    \tau_n \to \tau_* \ \P\text{-a.s.} \quad \text{and} \quad m_t^n \to m_t^* \ \text{weakly} \ \P \otimes dt \text{-a.e.}, \quad \text{as $n\to \infty$.} 
\end{equation}
By definition of $m^n$ and $\tau_n$, for any $n \geq 1$, we have $J(\tau_n, m^{n-1}) \geq J(\tau, m^{n-1})$ for any $\tau \in E$. 
Therefore, taking limits as $n \to \infty$ (justified by the integrability in \eqref{os uniform integrability costs} and the convergence in \eqref{os eq convergence to the sup}), we obtain $J(\tau_*, m^*) \geq J(\tau, m^*)$ for any $\tau \in E$, so that \begin{equation}\label{eq stopp tau star best response}
    \tau_* \in \hat{R}(m^*).
\end{equation}

Moreover, the sequence $(\tau_n)_n$ increasingly converges to $\tau_*$, $\mathbb P$-a.s.,\ as $n \to \infty$. 
Therefore, using the dominated convergence theorem for conditional expectations and exploiting the left-continuity of the map $\mathds{1}_{\{t < \cdot \} }$, we find that, $\P$-a.s.,
\begin{align*}
(p\tau_*)_t(y) & = \mathbb E [ \mathds{1}_{\{\psi(X_t) >y\}} \mathds{1}_{\{ t<\tau_*\}}| \mathcal{F}_T^{\text{\tiny{$B$}}}]\\
&= \lim_n \mathbb E [ \mathds{1}_{\{\psi(X_t) >y\}} \mathds{1}_{\{ t<\tau_n\}}| \mathcal{F}_T^{\text{\tiny{$B$}}}]\\
&= \lim_n (p\tau_n)_t(y) = \lim_n m^n_t(y), \quad \text{for any $(t, y)\in [0,T] \times \R$.}
\end{align*} 
The latter, thanks to the convergence in \eqref{os eq convergence to the sup}, in turn implies that $p\tau_* = \sup_n m^n = m^*$. 
This, together with \eqref{eq stopp tau star best response}, gives that $m^* \in R(m^*)$, so that $m^*$ is a MFG solution. 

The fact that $m^*$ is the minimal MFG solution follows as in the proof of the general Theorem \ref{theorem.main.general}, and this completes the proof of the theorem.  
\end{proof}

\subsection{Comments and examples} 
\begin{remark}\label{os remark learning procedure}
We point out that, since the function $\mathds{1}_{\{t < \,\cdot\, \} }$ is not right-continuous, the learning procedure $(m^n)_n \subset L$, which is defined inductively by $m^0 := \sup L$ and $m^{n+1}:= \sup R (m^n)$, cannot be shown to converge to a MFG equilibrium. 
\end{remark}

\begin{example} [MFGs of timing with common noise and interaction of scalar type]\label{os example MFGs Tankov}
As an example, we may consider a MFG in which the state variable $Z$ evolves according to the SDE 
$$ 
dZ_t = b(t,Z_t) dt + \sigma(t,Z_t) dW_t + \sigma^o(t,Z_t) dB_t, \ t \in [0,T],    
$$ 
for functions $(b,\sigma,\sigma^o)\colon [0,T] \times \R^{d_1} \to \times \R^{d_1 }  \times \R^{d_1 \times d_1} \times \R^{d_1 \times d_1}$ satisfying the usual Lipschitz conditions, and for  $\mathbb F$-adapted independent Brownian motions $W$ and $B$ taking values in $\R^{d_1}$ and $\R^{d_2}$, respectively.\
Moreover, one may consider a running profit function $f$, which enjoys a scalar nondecreasing dependence on the measure; that is, $f$ is given by $f(t,x,m):= \bar{f}(t,x, \left\langle \phi, m \right\rangle)$, where $\bar{f}(t,x,\cdot)$ is nondecreasing, $\phi\colon \R \to [0,\infty)$ is nondecreasing, and $\left\langle \phi, m \right\rangle:= \int_{[0,\infty)} \phi(x)\, dm(x)$. 

Such a setting resembles the one considered in \cite{bouveret.dumitrescu.tankov.20}, even if several differences arise between the problem in  \cite{bouveret.dumitrescu.tankov.20} and ours. 
Firstly, in \cite{bouveret.dumitrescu.tankov.20} no common noise is considered, and a nondegeneracy condition on the volatility matrix is needed in order to employ results from PDE theory. These requirements are not needed for our lattice-theoretic approach to work. 
Secondly, in \cite{bouveret.dumitrescu.tankov.20} -- in order to establish uniqueness of the MFG equilibrium -- a suitable anti-monotonicity property is imposed on the dependence of the running profit function with respect to the measure variable (see Assumption 8 therein), whereas, in the setting of this example, we need that the function $f$ is nondecreasing with respect to its third argument. 
Thirdly, a convergence result is established in \cite{bouveret.dumitrescu.tankov.20} for potential games, while the potential structure is not needed for our learning procedure to work.
\end{example} 

\begin{example}[MFGs of timing with common noise]\label{os example MFGs timing}
A particular example is when $Z_t=(t,\bar{Z}_t)$ and $\psi(t,\bar z,b)=t$, for $(t,\bar z,b) \in \R^{d_1} \times \R^{d_2}$. 
In this case, the fixed point condition in Definition \ref{os MFG equilibrium} reduces to an identity on the space of $\mathcal P ([0,T])$-valued random variables. 
In other words, an equilibrium is an $\mathcal F_T^B$-adapted $\mathcal P ([0,T])$-valued random variable $m$ such that, $\P$-a.s.,
$$
m_t= \P [ t< \tau^m | \mathcal{F}_T^{\text{$\tiny{B}$}}], \quad \text{for any $t\in [0,T]$ and some $\tau^m \in \textstyle{\argmin _E }J(\cdot,m) $}.
$$
This example corresponds to a particular case of the MFG of timing with common noise discussed in \cite{carmona.delarue.lacker.2017.timing}. 
\end{example}

\appendix

\section{Lattice-theoretic preliminaries}\label{append.lattice}

In this section, we collect some notions and preliminaries for lattices. Throughout, we consider a fixed lattice $L$, i.e.,~a partially ordered set (poset) in which every finite nonempty subset has a least upper bound and a greatest lower bound. We start with the following definition.

\begin{definition}\label{def.dedekind}\
\begin{enumerate}
  \item[a)] We say that $L$ is \textit{Dedekind $\sigma$-complete} if every countable nonempty subset, that is bounded above or below, has a least upper bound or a greatest lower bound, respectively. We say that $L$ is \textit{Dedekind complete} if every nonempty subset, that is bounded above or below, has a least upper bound or a greatest lower bound, respectively. We say that $L$ is \textit{Dedekind super complete} if every nonempty subset, that is bounded above or below, has a countable subset with the same least upper bound or greatest lower bound, respectively. We say that $L$ is \textit{complete} if every nonempty subset of $L$ has a least upper bound and a greatest lower bound.
  \item[b)] We say that a set $M\subset L$ is \textit{directed upwards} or \textit{directed downwards} if, for all $x,y\in M$, there exists some $z\in M$ with $x\vee y\leq z$ or $x\wedge y\geq z$, respectively.
 \end{enumerate}
\end{definition}

\begin{definition}\label{def:increasing}
 We say that a map $F\colon L\to \R$ is \textit{strictly increasing} if
 \begin{enumerate}
  \item[(i)] $F(x)\leq F(y)$ for all $x,y\in L$ with $x\leq y$,
  \item[(ii)] for all $x,y\in L$ with $x\leq y$ and $F(x)=F(y)$, it follows that $x=y$.
 \end{enumerate}
\end{definition}

The following lemma is a special case of \cite[Lemma A.3]{nendel}, and gives a sufficient condition for a Dedekind $\sigma$-complete lattice to be Dedekind super complete. For the proof, we refer to \cite{nendel}.

\begin{lemma}\label{lem.blms}
   Let $L$ be a Dedekind $\sigma$-complete lattice. If there exists a strictly increasing map $F\colon L\to \R$, then $L$ is Dedekind super complete.
\end{lemma}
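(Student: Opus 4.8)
The plan is to prove the two order-dual statements separately; since reversing the order of $L$ turns a strictly increasing $F$ into $-F$ (again strictly increasing for the reversed order) and interchanges suprema and infima, it suffices to treat a nonempty $M\subset L$ that is bounded above, say by $b\in L$, and to exhibit a countable $M_0\subset M$ whose least upper bound exists and coincides with that of $M$. First I would record the elementary consequence of the definition that $x<y$ implies $F(x)<F(y)$: indeed $x\le y$ with $x\neq y$ gives $F(x)\le F(y)$ by (i) and $F(x)\neq F(y)$ by the contrapositive of (ii). Next I would pass to the join-closure $D:=\{x_1\vee\cdots\vee x_n : n\in\N,\ x_i\in M\}$. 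This set is directed upwards, is bounded above by $b$, and has exactly the same upper bounds as $M$; hence it is enough to find a countable subset of $D$ with supremum $\sup D$, after which unpacking the finitely many generators of each element yields the desired countable $M_0\subset M$. Note that $F$ is bounded above by $F(b)$ on $D$.

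The core is a transfinite recursion producing an increasing family $(d_\alpha)$ in $L$ with $d_\alpha\le b$. Start from an arbitrary $d_0\in D$, and for $\alpha\ge 1$ set $u_\alpha:=\sup_{\beta<\alpha}d_\beta$; if $u_\alpha$ is already an upper bound of $D$, stop; otherwise choose $d\in D$ with $d\not\le u_\alpha$ and put $d_\alpha:=u_\alpha\vee d$, so that $d_\alpha>u_\alpha$ strictly. The only completeness invoked here is Dedekind $\sigma$-completeness, used to form $u_\alpha$ as the supremum of the bounded family $\{d_\beta:\beta<\alpha\}$; this is legitimate precisely as long as that family is countable.

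The crux, and the step I expect to be the main obstacle, is to show the recursion terminates at a countable stage, so that $\sigma$-completeness always suffices and no uncountable supremum is ever needed. Whenever the construction continues we have $d_\alpha>u_\alpha\ge d_\beta$ for every $\beta<\alpha$, whence $F(d_\alpha)>F(d_\beta)$ by strict monotonicity; thus $\alpha\mapsto F(d_\alpha)$ is a strictly increasing map of an ordinal into the set of reals below $F(b)$, i.e.\ its range is a well-ordered subset of $\R$. Since $\omega_1$ does not order-embed into $\R$, the construction must halt at some countable ordinal $\alpha^*$. At that stage $u_{\alpha^*}$ is an upper bound of $D$, hence of $M$; and any upper bound of $M$ dominates each $d_\beta$ and therefore dominates $u_{\alpha^*}=\sup_{\beta<\alpha^*}d_\beta$, so $u_{\alpha^*}=\sup M$. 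Because $\alpha^*$ is countable, $u_{\alpha^*}$ is the supremum of countably many finite joins of elements of $M$; collecting the finitely many $M$-generators of each $d_\beta$ into a single countable set $M_0\subset M$ gives $\sup M_0=u_{\alpha^*}=\sup M$. Together with the symmetric bounded-below argument this establishes Dedekind super completeness. I would close by emphasizing that the sole role of $F$ is to convert the order-theoretic length of the constructed chain into a bounded strictly increasing sequence of reals, which is exactly what forbids uncountable growth and lets countable completeness do the rest.
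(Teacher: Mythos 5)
Your argument is correct. Note first that the paper itself gives no proof of this lemma: it is stated as a special case of Lemma~A.3 of the cited reference \cite{nendel}, so there is no in-paper argument to compare against. Your reduction by order duality, the passage to the join-closure $D$, and above all the key mechanism --- using $F$ to turn any strictly increasing chain produced by the recursion into a strictly increasing, bounded family of reals, which therefore cannot have length $\omega_1$ --- are all sound, and the recursion only ever asks for countable suprema, so Dedekind $\sigma$-completeness suffices throughout. Two points deserve slightly more care in the write-up: (i) the phrase ``the finitely many $M$-generators of each $d_\beta$'' is not literally accurate, since $d_\beta=u_\beta\vee d^{(\beta)}$ involves the supremum $u_\beta$; what you collect are the generators of the chosen elements $d^{(\beta)}\in D$ (and of $d_0$), after which a one-line transfinite induction shows that this countable set $M_0$ and $\{d_\beta\}_{\beta<\alpha^*}$ have the same upper bounds, whence $\sup M_0=u_{\alpha^*}$; (ii) the claim that any upper bound of $M$ dominates each $d_\beta$ likewise needs that same small induction, since $d_\beta$ is built from a least upper bound of earlier terms. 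Neither is a gap. For comparison, the standard proof achieves the same end without transfinite recursion: let $D'$ be the set of suprema of countable subsets of $D$ (these exist by $\sigma$-completeness), put $s:=\sup F(D')\leq F(b)$, pick $z_n\in D'$ with $F(z_n)\to s$ and set $z:=\sup_n z_n\in D'$; if some $d\in D$ satisfied $d\not\leq z$ then $z\vee d\in D'$ and $F(z\vee d)>F(z)=s$, a contradiction, so $z=\sup M$ and is by construction a countable supremum from $D$. Your version trades this maximization for an ordinal-length argument; both hinge on exactly the same use of $F$.
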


A fundamental result by Birkhoff \cite[Section X.12, Theorem X.20]{birkhoff1967} and Frink \cite{frink1942} is that completeness of the lattice $L$ corresponds to the compactness of $L$ w.r.t.\ the so-called interval topology, whose definition we briefly recall here.

\begin{definition}\label{def.intervaltop}
The \textit{interval topology} on $L$ is the smallest topology $\tau$ on $L$ such that all closed intervals of the form
$$(-\infty, a]:=\{x\in L \, |\, x\leq a\}\quad \text{and}\quad [a,\infty):=\{x\in L\, |\, x\geq a\}, \quad \text{for }a\in L$$
are closed w.r.t.\ $\tau$.
\end{definition}

\smallskip
\textbf{Acknowledgements.} 
Financial support by the German Research Foundation (DFG) through the Collaborative Research Centre  1283/2 2021 -- 317210226 is gratefully acknowledged.

\bibliographystyle{siam} 
\bibliography{main.bib}
\end{document}